\newtheorem*{rep@theorem}{\rep@title}
\newcommand{\newreptheorem}[2]{%
\newenvironment{rep#1}[1]{%
 \def\rep@title{#2 \ref{##1}}%
 \begin{rep@theorem}}%
 {\end{rep@theorem}}}
\newtheorem{intro_thm}{Theorem}
\newtheorem{intro_cor}[intro_thm]{Corollary}
\newtheorem{lemma}{Lemma}[section]
\newtheorem{thm}[lemma]{Theorem} 
\newtheorem{prop}[lemma]{Proposition}
\newtheorem{cor}[lemma]{Corollary}
\theoremstyle{definition}
\newtheorem{defn}[lemma]{Definition}
\newtheorem{example}[lemma]{Example}
\newtheorem{con}[lemma]{Construction}
\newtheorem{es}[lemma]{Example}
\newtheorem{notation}[lemma]{Notation}
\newtheorem{no}[lemma]{\S}
\newtheorem{remark}[lemma]{Remark}
\theoremstyle{remark}
\newtheoremstyle{TheoremNum}
        {0.2 cm}{0.2 cm}              
        {\itshape}                      
        {}                              
        {}                     
        {.}                             
        { }                             
        {\thmname{\bfseries #1}\thmnote{ \bfseries #3}}
    \theoremstyle{TheoremNum}
\renewcommand{\L}{\mathfrak}
\newcommand\calM{{\mathcal M}}
\newcommand\calN{{\mathcal N}}
\newcommand\calG{{\mathcal G}}
\newcommand\calE{{\mathcal E}}
\newcommand\cF{{\mathcal F}}
\newcommand{\cE}{\mathcal E}
\newcommand{\cM}{\mathcal M}
\newcommand{\Hm}{\textup{H}}
\newcommand{\Hb}{\textup{H}_{\textup{b}}}
\newcommand{\Hmb}{\textup{H}_{\textup{mb}}}
\newcommand{\Hcb}{\textup{H}_{\textup{cb}}}
\newcommand{\Linf}{\mathrm{L}^{\infty}}
\newcommand{\rk}{\text{rk}}
\newcommand{\cG}{\mathcal{G}}
\newcommand{\cR}{\mathcal{R}}
\newcommand{\cH}{\mathcal{H}}
\newcommand{\cK}{\mathcal{K}}
\newcommand{\qand}{\quad \text{and} \quad}
\newcommand{\bR}{\mathbb{R}}
\renewcommand{\phi}{\varphi}
\newcommand{\cP}{\mathcal P}
\newcommand{\cB}{\mathcal B}
\newcommand{\cL}{\mathcal L}
\newcommand{\bN}{\mathbb N}
\newcommand{\ME}{\overset{\textup{ME}}{\sim}}
\begin{document}

\title[Bounded cohomological induction for transverse groupoids]{Bounded cohomological induction for transverse measured groupoids}

\author[T. Hartnick]{Tobias Hartnick}
\address{Institut f\"ur Algebra und Geometrie, KIT, Germany}
\email{tobias.hartnick@kit.edu}
\author[F. Sarti]{Filippo Sarti}
\address{Dipartimento di Matematica, Universit\'a di Pisa, Italy}
\email{filippo.sarti@dm.unipi.it}


\begin{abstract} 
We establish an induction isomorphism in the context of measurable bounded cohomology of discrete measured groupoid, which generalizes the Eckmann-Shapiro isomorphism in bounded cohomology of lattices due to Burger and Monod. 
In our wider setting, the role of lattices is taken by the class of transverse measured groupoids $(\cG, \nu)$ associated with a cross-section $Y$ in a pmp dynamical system $(X, \mu)$ of a lcsc group $G$ such that the associated hitting time process of $Y$ is locally integrable. Typical examples are given by pattern groupoids of strong approximate lattices. 

Under the assumptions that $G$ is unimodular we show that the measurable bounded cohomology of $(\cG, \nu)$ is isomorphic to the continuous bounded cohomology of $G$ with coefficients in $\Linf(X, \mu)$. As a consequence, if $G$ is amenable, then $(\cG, \nu)$ is boundedly acyclic, and in general the restriction map $\Hcb^\bullet (G; \bR) \to \Hmb^\bullet ((\cG, \nu);\underline{\bR})$ is injective. Moreover, it follows from known results in continuous bounded cohomology that if $G$ is a semisimple higher rank Lie group of Hermitian (respectively complex classical) type, then the second (respectively third) measurable bounded cohomology of $(\cG, \nu)$ is generated by the restriction of the bounded K\"ahler class (respectively bounded Borel class). These are the first explicit computations of non-trivial bounded cohomology groups of measured groupoids which are not isomorphic to an action groupoid.
\end{abstract}

\maketitle

\section{Introduction}

\subsection{Bounded cohomological induction and rigidity}

The interplay between locally compact groups and their lattices plays a central role in modern rigidity theory. Many properties of locally compact groups are inherited by their lattices, and many properties of discrete groups are inherited by their lattice envelopes, sometimes under additional cocompactness or integrability assumptions. For example, in geometric group theory the quasi-isometry type of a locally compact group is inherited by cocompact lattices, and many analytic properties of topological groups like amenability and Property (T) hold for a lattice in a locally compact group if and only if they hold for the ambient group. On the level of group cohomology (with trivial real coefficients, say), one can show that the continuous cohomology of a locally compact group injects into the cohomology of any of its \emph{uniform} lattices; this is a consequence of the classical Eckmann--Shapiro induction isomorphism in cohomology and does in general not hold for
non-uniform lattices. Similar, but stronger results hold also in bounded cohomology; since these results can be seen as the starting point of the present investigation, we will describe them in more detail.

Firstly, if $\Gamma$ is a lattice in a locally compact second countable (lcsc) group $G$, then by work of Burger and Monod \cite{BuMo} there is an isometric induction isomorphism (a la Eckmann-Shapiro)
\begin{equation}\label{InductionBCLattice}
\Hb^\bullet(\Gamma; \bR) \cong \Hcb^\bullet(G; \Linf (G/\Gamma)),
\end{equation}
where the left-hand side denotes bounded cohomology of $\Gamma$ with trivial real coefficients and the right-hand side denotes continuous bounded cohomology of $G$ with coefficients in the Banach $G$-module $\Linf (G/\Gamma)$. This implies, secondly, that the natural restriction map $\Hcb^\bullet(G; \bR) \to \Hb^\bullet(\Gamma; \bR)$ is injective. Finally, stronger results can be established for specific classes of groups. For example, it was showed by Monod \cite{MonodSarithmetic} that for semisimple Lie groups the restriction map happens to be an isomorphism within a certain range. In order to prove the induction isomorphism one makes use of the identification of continuous bounded cohomology with \emph{measurable} bounded cohomology, and indeed \eqref{InductionBCLattice} is better understood as a statement about \emph{measurable} bounded cohomology of locally compact groups.

Bounded cohomological induction is a central tool in establishing rigidity results for lattices in Lie groups; for example, it can be used to prove Mostow rigidity (via a dual version of the classical Gromov-Thurston argument \cite{BurgerBucherIozziRigidity}). Modern applications of bounded cohomological induction to rigidity theory started from the Burger--Monod proof of triviality of higher rank lattice actions on the circle \cite{BuMo} and include applications to K\"ahler rigidity \cite{BIKahler} and Hermitian higher Teichm\"uller theory \cite{BIWSurvey}. 

The present article is concerned with a generalization of Burger-Monod-Eckmann-Shapiro induction in the context of measurable bounded cohomology of discrete measured groupoid. In this wider context, the homogeneous dynamical system $G \curvearrowright G/\Gamma$ will be replaced by a general non-homogeneous pmp system, and the role of lattices will be taken by a class of discrete measured groupoids known as \emph{transverse measured groupoid} associated with certain cross sections $Y \subset X$.

\subsection{Measurable bounded cohomology of discrete measured groupoids}
In \cite{sarti:savini:23, sarti:savini25}, Alessio Savini and the second named author introduced a version of measurable bounded cohomology which applies to measured groupoids. We briefly recall the definition; see Section \ref{SecGroupoids} below for general background on measured groupoids. Given a discrete measured groupoid $(\cG, \nu)$ with target map $t$ we define a sequence of Borel spaces $\cG^{[n]}$ by
\[
\cG^{[n]} \coloneqq \{(g_0, \dots, g_{n-1}) \in (\cG^{(1)})^n \mid t(g_0) = \dots = t(g_{n-1})\}.
\]
This projects to $\cG^{(0)}$ with countable fibers via the map $(g_0, \dots, g_{n-1}) \mapsto t(g_0)$, and hence admits a unique $\sigma$-finite Borel measure which projects to $\nu$ and has fiber counting measures; this allows us to define a family of Banach spaces $\Linf(\cG^{[n]})$. 

We say that a function class $[f] \in \Linf(\cG^{[n+1]})$ is \emph{$\cG$-invariant} if for almost all $(g_0, \dots, g_n) \in \cG^{[n+1]}$ and all  $g \in \cG^{(1)}$ with $t(g_0) = t(g)$ we have
\[
f(g^{-1}g_0, \dots, g^{-1}g_n) = f(g_0, \dots, g_n).
\]
This defines a subspace  $\Linf(\cG^{[n]})^{\cG}  \subset \Linf(\cG^{[n]})$, and we have well-defined differentials
\begin{gather*}
d^n: \Linf(\cG^{[n+1]})^{\cG} \to \Linf(\cG^{[n+2]})^{\cG},\\
 d^nf(g_0, \dots, g_{n+1}) = \sum_{j=0}^{n+1}(-1)^j f(g_0, \dots, \widehat{g_j}, \dots, g_n).
\end{gather*}
Following \cite{sarti:savini:23} we then define the \emph{measurable bounded cohomology} of $(\cG, \nu)$ with coefficients in the trivial line bundle $\underline{\bR}$ over $\cG^{(0)}$ as
\[
\Hmb^\bullet((\cG,\nu); \underline{\bR}) \coloneqq \Hm^\bullet(0 \to  \Linf(\cG^{[1]})^{\cG} \to  \Linf(\cG^{[2]})^{\cG} \to  \Linf(\cG^{[3]})^{\cG} \to \dots).
\]

\subsection{Transverse measured groupoids}

Our first task is to identify a suitable replacement for lattices in the world of discrete measured groupoids; we take inspiration from the theory of approximate lattices \cite{BHApproximateLattices}, and more precisely the dynamical approach via transverse systems as considered in \cite{BHK, BHSiegel, ABC}.

Consider a probability-measure preserving (pmp) action $G \curvearrowright (X, \mu)$ of a unimodular lcsc group on a standard probability space and fix a Haar measure $m_G$ for $G$. We then obtain a Borel action groupoid $G \ltimes X$ which admits an invariant measure $\mu$ for the Haar system defined by $m_G$.

We say that a Borel subset $Y \subset X$ is a \emph{cross section} if $GY = X$ and for every $x \in GY$ the \emph{hitting time set} $Y_x \coloneqq \{g \in G \mid gx \in Y\}$ is non-empty and locally finite. In this case, the restriction Borel groupoid $\cG \coloneqq (G \ltimes X)|_Y$ has countable $t$-fibers, and the invariant probability measure $\mu$ on $X$ induces an invariant $\sigma$-finite Borel measure $\nu$ for $\cG$, variously known as the \emph{transverse measure} or \emph{Palm measure} of $(X, \mu, Y)$ (see \cite{ABC}). We are going to assume that $\nu$ is \emph{finite}, which amounts to local integrability of the point process $x \mapsto Y_x$; we then refer to $(X, \mu, Y)$ as an \emph{integrable transverse $G$-system} and to the discrete measured groupoid $(\cG, \nu)$ as the associated \emph{transverse measured groupoid}.

It turns out that transverse measured groupoids generalize lattices. Indeed, if $\Gamma$ is a lattice in a lcsc group $G$, then $G$ acts on $X \coloneqq G/\Gamma$ fixing a unique probability measure $\mu$ on $X$, and it we set $Y \coloneqq \{\Gamma\}$, then $(X, \mu, Y)$ is an integrable transverse $G$-system. The associated transverse measured groupoid has a single object $\{e\Gamma\}$ with automorphism group $\Gamma$ and can hence be identified with $\Gamma$; the transverse measure is the point measure at $\Gamma$ of total mass given by the covolume of $\Gamma$. By definition, the measurable bounded cohomology of $(\cG, \nu)$ is then just the bounded cohomology of $\Gamma$.

\subsection{Bounded cohomological induction for transverse measured groupoids}
From now on, let $G$ denote a unimodular lcsc group with Haar measure $m_G$. The main result of this article is the following version of Burger--Monod--Eckmann--Shapiro induction:
\begin{intro_thm}\label{Induction} If $(\cG, \nu)$ denotes the transverse measured groupoid of an integrable transverse $G$-system $(X, \mu, Y)$, then there is an isometric isomorphism
\[
\Hmb^\bullet((\cG,\nu); \underline{\bR}) \cong \Hcb^\bullet(G; \Linf (X, \mu)).
\]
\end{intro_thm}
\textbf{For the remainder of this subsection, $(\cG, \nu)$ denotes the transverse measured groupoid of an integrable transverse $G$-system $(X, \mu, Y)$.} We spell out a few basic consequences of Theorem \ref{Induction}; the most basic consequence is as follows: 
\begin{intro_cor}\label{intro_cor_amenable} 
  If $G$ is amenable, then $(\cG, \nu)$ is boundedly acyclic, i.e. $\Hmb^k((\cG,\nu); \underline{\bR})$ vanishes for all $k > 0$. 
\end{intro_cor}
To describe the consequences of Theorem \ref{Induction} beyond the amenable case, we introduce the following notation. Let $p$ be an admissible probability measure on $G$; then every class in $\Hcb^n(G; \bR)$ can be represented by
a  $G$-invariant continuous bounded $n$-cocycle $c: G^{n+1} \to \bR$, which is harmonic with respect to $p^{\otimes n}$. It turns out that
\[
\mathrm{res}(c): \cG^{[n+1]} \to \bR, \quad ((g_0, y_0), \dots, (g_n, y_n)) \mapsto c(g_0, \dots, g_n).
\]
defines a $\cG$-invariant bounded measurable $n$-cocycle for $\cG$ and that there is a well-defined restriction map
\[
\mathrm{res}^\bullet: \Hcb^\bullet(G; \bR) \to \Hmb^\bullet((\cG,\nu); \underline{\bR}), \quad [c] \mapsto [\mathrm{res}(c)].
\]
If we denote by $\Linf (X, \mu)_0 \subset \Linf (X, \mu)$ the subspace of functions of integral $0$, then we obtain:
\begin{intro_cor}\label{intro_cor_seminorms}
   The restriction map $\mathrm{res}^\bullet:  \Hcb^\bullet(G; \bR) \to \Hmb^\bullet((\cG,\nu); \underline{\bR})$ is injective with cokernel isomorphic to $\Hcb^\bullet(G; \Linf (X, \mu)_0)$ and does not increase seminorms.
\end{intro_cor}
If $G$ is a connected simple Lie group, then we can apply the results of Monod from \cite{MonodSarithmetic}. Indeed, the module $\Linf(X, \mu)_0$ is a semi-separable coefficient $G$-module, and if $\mu$ happens to be ergodic, then it does not admit any invariant vectors. By the aforementioned work of Monod, this implies:
 \begin{intro_cor}[Monod]\label{2rk} 
If $G$ is a connected simple Lie group and $\mu$ is $G$-ergodic, then for all $0\leq k< 2 \cdot \mathrm{rk}_{\mathbb{R}}(G)$ the restriction map $\mathrm{res}^k:  \Hcb^k(G; \bR) \to  \Hmb^k((\cG, \nu);\underline{\bR})$ is an isometric isomorphism.
 \end{intro_cor}
 In the setting of Corollary \ref{2rk}, it is conjectured that the continuous bounded cohomology $ \Hcb^\bullet(G; \bR)$ is isomorphic to the corresponding continuous cohomology; currently this is only known in low degrees. More precisely, by results of Burger and Monod \cite{BuMo}
 the space $\Hcb^2(G;\bR)$ is non-trivial if and only if $G$ is Hermitian, in which case it is one-dimensional, generated by the bounded K\"ahler class $\kappa_b^G$ \cite{BuMo}. Similarly, it was recently established in a series of papers  \cite{DLCM,HDLCM1,HDLCM2} that if $G$ is a complex simple Lie group of classical type, then $\Hcb^3(G; \bR)$ is one-dimensional and generated by the bounded Borel class $\beta_b^G$. If $G$ is Hermitian (respectively complex classical), we thus denote by $\kappa_b^{\cG} \coloneqq \mathrm{res}(\kappa_b^G)$ (respectively $\beta_b^{\cG} \coloneqq \mathrm{res}(\beta_b^{G}$)); then Corollary \ref{2rk} implies:
 \begin{intro_cor}[Burger-Monod, De la Cruz Mengual]\label{intro_cor_2BC}
   Let $(X,\mu,Y)$ be an ergodic transverse $G$-system where $G$ is a connected simple Lie group or real rank $\geq 2$.
 \begin{enumerate}[(i)]
 \item If $G$ is Hermitian, then $\Hmb^2((\cG,\nu); \underline{\bR}) = \bR \cdot \kappa_b^{\cG}$; otherwise $\Hmb^2((\cG,\nu); \underline{\bR}) = 0$.
 \item If $G$ is complex classical, then  $\Hmb^3((\cG,\nu); \underline{\bR}) = \bR \cdot \beta_b^{\cG}$.
 \end{enumerate}
\end{intro_cor}
These are the first computation of a non-trivial bounded cohomology groups of a discrete measured groupoid which is not isomorphic to an action groupoid. Let us mention in passing that
in view of recent results of Monod \cite{Flatmates} (in combination with \cite{MonodSarithmetic}) we also have the following vanishing result:
\begin{intro_cor}[Monod]\label{intro_cor_Archimedean}
   Let $(X,\mu,Y)$ be an ergodic transverse $G$-system where $G = \mathbf{G}(k)$ and $\mathbf{G}$ is a simple algebraic group over a non-Archimedean local field $k$. Then $ \Hmb^k((\cG, \nu);\underline{\bR}) = 0$ for all $0<k<2 \cdot \mathrm{rk}(G)$.
\end{intro_cor}

\subsection{Application to measured subsets} 

Our motivation in studying measurable bounded cohomology of transverse groupoids comes from the study of approximate lattices as introduced in \cite{BHApproximateLattices}; more generally, transverse measured groupoid arise naturally in the study of discrete subsets of locally compact groups. Let us briefly describe this connection.

Every lcsc group $G$ acts continuously on the space $\mathrm{Cl}(G)$ of closed subsets of $G$ via $g\cdot P \coloneqq Pg^{-1}$, where the latter is equipped with the Chabauty--Fell topology. Given a locally finite subset $P_o \subset G$ we denote by  $X^+(P_o)$ the orbit closure of $P_o$ in $\mathrm{Cl}(G)$ and define its \emph{hull} by $X(P_o) \coloneqq  X^+(P_o) \setminus \{\emptyset\}$. With the restricted action, this is a lcsc $G$-space which admits a canonical cross section given by
\[
Y(P_o) \coloneqq  \{Q \in X(P_o) \mid e \in Q\}.
\]
The associated transverse groupoid $\cG_{P_o}$ is called the \emph{pattern groupoid} of $P_o$ and contains combinatorial information about $P_o$. More precisely, we define the collection of \emph{marked patches} of $P_o$ as
\[
\mathcal P(P_o) \coloneqq \{(B \cap P_og^{-1}, B) \mid B \subset G \text{ bounded}, g \in G\}.
\]
If $(\alpha, B) \in \mathcal P(P_o)$, then $\alpha$ is called a \emph{$B$-patch} of $P_o$, and we say that $P_o$ has \emph{finite local complexity} (FLC) if it has finitely many $B$-patches for every bounded $B\subset G$; equivalently, the symmetrization $S(P_o) \coloneqq P_oP_o^{-1}$ does not accumulate at the identity. If $P_o$ has FLC, then the associated pattern groupoid is given by
\[
\cG_{P_o}^{(0)} = \{Q \subset S(P_o) \mid e \in Q, \cP(Q) \subset \cP(P_o)\} \qand \cG_{P_o}^{(1)} = \{(q,Q) \mid q \in Q \in \cG_{P_o}^{(0)}\}.
\]
If $P_o$ and $P_1$ are FLC sets in possibly different lcsc groups $G$ and $H$, then they are called \emph{pattern-isomorphic} if the associated pattern groupoids are isomorphic. This is the case if the local groups $S(P_o)$ and $S(P_1)$ (with partial group operations given by restriction from the ambient lcsc group) are isomorphic and implies that there there is bijection between $\cP(P_0)$ and $\cP(P_1)$ which is compatible with inclusions, i.e.\ $P_o$ and $P_1$ are combinatorially equivalent in the sense that they have ``matching patches''. The \emph{pattern isomorphism problem} for FLC sets asks for a classification of FLC sets in lcsc groups up to pattern-isomorphism or, more realistically, for isomorphism invariants of pattern groupoids a.k.a.\ \emph{pattern invariants}.

We say that a FLC set $P_o \subset G$ is \emph{measured} if the hull system $G \curvearrowright X(P_o)$ admits an ergodic invariant probability measure $\mu \in \mathrm{Prob}_{\mathrm{erg}}(X(P_o))^G$. Examples of measured FLC sets include strong approximate lattices in the sense of \cite{BHApproximateLattices}, and in particular regular model sets in the sense of \cite{BHP1}. One can show that in the case of a measured FLC set the transverse system $(X(P_o), \mu, Y(P_o))$ is automatically integrable, hence gives rise to a finite invariant measure $\nu$ on $\cG_{P_o}$. We define the \emph{bounded cohomology} of $P_o$ with respect to $\mu$ as
\[
\Hb^\bullet((P_o, \mu); \bR) \coloneqq \Hmb^\bullet((\cG_{P_o}, \nu); \underline{\bR}).
\]
If we assume that $P_o$ has \emph{finite Voronoi volume} in the sense that all Voronoi cells of $P_o$ in $G$ have uniformly bounded Haar volume (e.g.\ if $P_o$ is cocompact), then one can show that every finite invariant measure on $\cG_{P_o}$ is proportional to a transverse measure of an invariant probability measure on $\mu$. This then implies that for every $k \geq 0$ the set 
\[
\Hb^k(P_o; \bR) \coloneqq \{[\Hb^k((P_o, \mu); \bR)] \mid \mu \in \mathrm{Prob}_{\mathrm{erg}}(X(P_o))^G\},
\]
where $[\cdot]$ denotes equivalence classes up to isometric isomorphism, is a pattern invariant of $P_o$. Theorem \ref{Induction} now allows us to compute this pattern invariant in certain cases:
\begin{intro_cor}\label{ApproxLatt1}
If $P_o \subset G$ is a measured FLC set of finite Voronoi volume, then
\[
\Hb^k(P_o; \bR) = \{\Hcb^k(G; \Linf(X, \mu)) \mid  \mu \in \mathrm{Prob}_{\mathrm{erg}}(X(P_o))^G\}.
\]
\end{intro_cor}
\begin{intro_cor}\label{ApproxLatt2} If $G$ is a higher rank simple Lie group and $P_o \subset G$ is a strong approximate lattice of finite Voronoi volume, then
\[
\Hb^2(P_o; \bR) = \left\{\begin{array}{ll}\{[\bR]\},& \text{if $G$ is of Hermitian type};\\ \{[0]\}, &\text{else.}\end{array} \right.
\]
\end{intro_cor}
This implies that a strong approximate lattice in a higher rank simple Lie group of Hermitian type cannot be pattern isomorphic to an approximate lattice in a higher rank Lie group which is not of Hermitian type (assuming both are of finite Voronoi volume). While in principle this indicates the usefulness of bounded cohomology in classifying approximate lattices up to pattern isomorphism, we should point out that (as observed by Bj\"orklund and the first named author) in the context of higher rank Lie groups much stronger pattern invariants arise via Furman's theory of measure equivalence rigidity \cite{Furman99}.

\subsection{An application to discretization}

Let $G$ be a unimodular lcsc group and denote by $G^\delta$ the underlying discrete group. If $G$ admits a lattice $\Gamma$, then the corresponding restriction map factors as
\[
\Hcb^\bullet(G; \bR) \to \Hb^\bullet(G^\delta; \bR) \to\Hb^\bullet(\Gamma; \bR).
\]
Since the restriction map is injective, we can thus infer from the existence of a lattice in $G$ the injectivity of the \emph{discretization map} $\Hcb^\bullet(G; \bR) \to \Hb^\bullet(G^\delta; \bR)$. A similar argument can now also be run in the groupoid context. If we define the \emph{return time set} of a transverse $G$-system $(X, \mu, Y)$ as $\Lambda(Y) \coloneqq \{g \in G \mid gY \cap Y \neq \emptyset\}$, then this leads to the following result:
\begin{intro_cor}\label{Discretization} If there exists an integrable transverse $G$-system with discrete return time set, then the discretization map $\Hcb^\bullet(G; \bR) \to \Hb^\bullet(G^\delta; \bR)$ is injective.
\end{intro_cor}
It is well-known that \emph{every} lcsc group admits an integrable transverse system, but it seems to be an open problem whether we can always arrange the return time set to be discrete. This is related to the question whether every lcsc group admits a measured subset $P_o$ such that $P_oP_o^{-1}$ is locally finite.

\subsection{On the proof of Theorem \ref{Induction}}
The first ingredient in our proof of Theorem \ref{Induction} is the simple observation (due to Bj\"orklund and the first named author) that transverse groupoids are \emph{measure equivalent} to the ambient lcsc group in a suitable sense. This can be seen as a measurable version of the classical fact that $G \ltimes X$ and $\cG$ are Morita-equivalent if the defining cross section $Y$ happens to be cocompact.

Our second ingredient is a result of Monod and Shalom \cite{MonShal} which states that bounded cohomology of discrete groups is well-behaved under measure equivalence, or rather its straight forward generalization to discrete measured groupoids. Combining both ingredients we deduce that Theorem \ref{Induction} holds if $G$ happens to be discrete and $\mu$ is ergodic.

Going beyond the discrete case, we expect that measurable bounded cohomology of (non-discrete) measured Borel groupoids is also well-behaved under suitably-defined measure equivalence, which would imply Theorem \ref{Induction} (at least for ergodic actions). However, we are currently lacking the technology concerning Borel groupoids to establish such a general result. 

In view of this shortfall, our proof in the non-discrete case works by imitating (rather than generalizing) the proof in the discrete case and making a number of (rather non-trivial) technical adjustments by hand. This is where our third ingredient comes in, a construction of a cohomological transfer spaces, which is inspired by a construction of Ph.\ Blanc \cite{Blanc} in the lattice case, or rather a variant of this construction as presented by Monod in \cite{monod:libro}. As a by-product, we also see that no ergodicity hypothesis is required in Theorem \ref{Induction}.

\subsection{Organization of the article} Besides this introduction, this article contains $6$ further sections. In Section \ref{SecGroupoids} we recall basic notions about measured groupoids (Sections \ref{sec2.1}, \ref{sec2.2} and \ref{sec2.3}), measurable bundles (Sections \ref{sec2.4}) and bounded cohomology of measured groupoids (Section \ref{sec2.5}).
In Section \ref{sec:trans:system} we discuss transverse systems (Section \ref{sec3.1}) and transverse measured groupoids (Section \ref{sec:3.2}); we then explain how actions of a transverse groupoid on Lebesgue spaces can be induced by actions of the ambient groups and discuss the associated bundles (Section \ref{sec:3.3}). In Section \ref{sec:BCandME} we establish a version of our main theorem in the special case where $G$ is discrete, using the theory of measure equivalence; we then point out the technical reasons which currently prevent us from extending this argument to the non-discrete case. We introduce couplings for discrete measured groupoids (Section \ref{sec:4.1}), we show how bounded cohomology behaves under couplings (Section \ref{sec:4.2}) and we apply this machinery to transverse groupoids (Section \ref{sec:4.3}). 
We then turn to the proof of our main theorem in the non-discrete case. Section \ref{sec:trans:isom} introduces our main technical tools, namely transfer spaces (Section \ref{sec:trans:space}), transfer measures (Section \ref{sec:trans:meas}) and the transfer isomorphism (Section \ref{sec:isomo}), and based on these tools the actual proof is given in Section \ref{sec:amenable}. A key is step is to show that a Lebesgue space for the transverse groupoid which is induced from an amenable Lebesgue space of the ambient lcsc group is itself amenable. We prove this in Section \ref{sec:6.1} and use it to construct resolutions computing the measurable bounded cohomology of transverse measured groupoids in Section \ref{SubsecAmenable}.
We then conclude the proof of the main theorem in Section \ref{sec:6.2}. We conclude in Section \ref{SecPoisson} with an explicit implementation of the induction isomorphism on the cocycle level. In Section \ref{sec:res:ind} we focus on some applications. We introduce restriction and induction maps (Section \ref{sec:7.1}) and then establish the various corollaries mentioned in the introduction (Section \ref{sec:corollaries}).  

\subsection*{Acknowledgements} The second named author was partially supported by INdAM through GNSAGA and by MUR through the PRIN project ``Geometry and topology of manifolds".

\section{Background on measurable bounded cohomology of groupoids}\label{SecGroupoids}

\subsection{Groupoids and their actions}\label{sec2.1}

\begin{no}\label{BasicGroupoid}  A \emph{groupoid} is a small category $\mathcal{G}$ whose morphisms are all invertible. We denote the set of objects and the set of morphisms respectively by $\mathcal{G}^{(0)}$ and $\mathcal{G}^{(1)}$ and, given a composable pair $(g,h) \in \cG^{(1)}\times \mathcal{G}^{(1)}$, we write $gh \coloneqq  g \circ h$. The \emph{target} and the \emph{source} maps are indicated respectively by $t:\mathcal{G}^{(1)}\to \mathcal{G}^{(0)}$ and by $s:\mathcal{G}^{(1)}\to \mathcal{G}^{(0)}$. Given a groupoid $\cG$ and $x,y \in \cG^{(0)}$ we define subsets of $\cG^{(1)}$ by
\[\cG_x \coloneqq  s^{-1}(x), \quad \cG^y \coloneqq  t^{-1}(y) \qand \cG_x^y \coloneqq  \cG_x \cap \cG^y.\] 
We often consider $\cG^{(0)}$ as a subset of $\cG^{(1)}$ via the embedding $x \mapsto 1_x$. Under this identification we then have $s(g) = g^{-1}g$ and $t(g) = gg^{-1}$.
The \emph{underlying equivalence relation} $R_{\cG}$ of $\cG$ is the equivalence relation on $\cG^{(0)}$ given by $xR_{\cG} y$ iff $\cG_x^y \neq \emptyset$. We say that a subset $U\subset \mathcal{G}^{(0)}$ is \emph{invariant} if it is invariant under $R_{\cG}$, i.e.\ \[
\forall\, g \in \cG^{(1)}: \; s(g) \in U \iff t(g) \in U.
\]
A groupoid is called \emph{principal} if $|\cG_x^y| \leq 1$ for all $x,y \in \cG^{(0)}$; in this case we have  $|\cG_x^y| = 1$ if and only if $x R_{\cG} y$, and hence $\cG$ is uniquely determined by $R_{\cG}$. 
\end{no}
 \begin{notation} Given sets $A, B, C$ and maps $f: A \to C$ and $g: B \to C$ we denote by
\[
A \; {}_f\negthickspace \times_g B \coloneqq  \{(a,b) \in A \times B \mid f(a) = g(b)\}
\]
the associated \emph{fiber product}; if $f$ and $g$ are clear from context we also write $A \times_C B$. In particular, if $\cG$ is a groupoid, $A$ is a set and $t_A: A \to \cG^{(0)}$ is a map, then we write
\[
 \mathcal{G}^{(1)} \times_{\mathcal{G}^{(0)}}A \coloneqq  \mathcal{G}^{(1)}\; {}_s \negthickspace \times_{t_A} A.
\]
We denote by $\cG^{(2)} \coloneqq \cG^{(1)} {}_s \negthickspace \times _t \cG^{(1)}$ the space of composable arrows and by $\cG^{(n)} \coloneqq \cG^{(1)}  {}_s \negthickspace \times _t \dots \ {}_s \negthickspace \times_t \cG^{(1)}$ the space of paths of length $n$ in $\cG$.
\end{notation}
\begin{no} Let $\cG$ be a groupoid. A \emph{left $\mathcal{G}$-space} is a set $A$ together with an \emph{anchor map} $t_A:A\to \cG^{(0)}$ and an \emph{action map} $\cG^{(1)} \times_{\mathcal{G}^{(0)}} A \to A$, $(g,a) \mapsto g\cdot a$ such that the following properties hold:
\begin{enumerate}[(i)]
\item $t_A(g\cdot a)=t(g)$ for all  $(g,a) \in \mathcal{G}^{(1)}\times_{\mathcal{G}^{(0)}} A$;
\item $1_y\cdot a = a$ for all $y \in \cG^{(0)}$ and $a \in t_A^{-1}(y)$;
\item  $(gh)\cdot a=g\cdot (h\cdot a)$, whenever $(gh,a)\in \mathcal{G}^{(1)}\times_{\mathcal{G}^{(0)}}A$ and $(g,h\cdot a) \in \mathcal{G}^{(1)} \times_{\mathcal{G}^{(0)}} A $.
\end{enumerate}
We also say that \emph{$\mathcal{G}$ acts on $A$} and write $\mathcal{G}\curvearrowright A$. Note that (ii) and (iii) imply in particular that $g^{-1}\cdot (g\cdot a) = a$ for all $(g,a) \in  \mathcal{G}^{(1)}\times_{\mathcal{G}^{(0)}} A$.
\end{no}
\begin{example}\label{ActionExamples} The following are examples of groupoid actions:
\begin{enumerate}[(i)]
\item Every group $G$ can be considered as a groupoid $\underline{G}$ with a single object $*$ with automorphism group $G$; then a $\underline{G}$-action is just a $G$-action in the usual sense.
\item  Every groupoid $\cG$ acts on $\cG^{(0)}$ with anchor map the identity and action given by $g x = t(g)$ if $s(g) = x$, and on 
$\cG^{(1)}$ with anchor map $t$ and action given by $g_1 \cdot g_2 \coloneqq  g_1 g_2$.
\item If a groupoid acts on a set $A$ with anchor map $t_A: A \to \cG^{(0)}$, then it also acts on the fiber product $A^{[n]}\coloneqq A {}_{t_A}\negthickspace\times \ldots\negthickspace \times_{t_A} A$: The anchor map is given by 
\[
t_A(a_0, \dots, a_{n-1}) = t_A(a_0)  = \dots = t_A(a_{n-1}),
\]
and if $t_A(a_0, \dots, a_{n-1})=  s(g)$, then
 $g\cdot (a_0, \dots, a_{n-1}) = (g\cdot a_0, \dots, g\cdot a_{n-1})$. We refer to this as the \emph{diagonal action} of $\cG$ on $A^{[n]}$. 
 \item Combining (ii) and (iii), every groupoid $\cG$ acts diagonally on \[\cG^{[n]} =  \{(g_0, \dots, g_{n-1}) \in \cG^{(1)} \mid t(g_0) = \dots = t(g_{n-1})\}\] by left-multiplication with anchor map $t(g_0,\dots, g_{n-1})\coloneqq t(g_0) = \dots = t(g_{n-1})$.
\end{enumerate}
\end{example}
\begin{con}[Action groupoid]
If $A$ is a left $\cG$-space, then the associated \emph{left action groupoid} $\mathcal{L} = \cG \ltimes A$ is given by $\mathcal{L}^{(0)} = A$, $\mathcal{L}^{(1)} = \cG^{(1)} {}_A \times_{t_A} A$ with structure maps $s(g,a) = a$, $t(g,a) =g\cdot a$ and
$(g, a)(h,b) = (gh,b)$ if $a = h\cdot b$. 

Similarly, the \emph{right action groupoid} $\mathcal{R} = A \rtimes \cG$ is given by $\mathcal{R}^{(0)} = A$, $\mathcal{R}^{(1)} = A \; {}_{t_A}\negthickspace \times_t \cG^{(1)}$ with structure maps $s(a,g)=g^{-1}\cdot a$, $t(a,g)=a$ and $(a,g)(b,h) = (a,gh) $ if $g^{-1}\cdot a = b$. 

In particular, given a group action $G \curvearrowright A$ we obtain groupoids $G \ltimes A$ and $A \rtimes G$.
 \end{con}
\begin{no}
Using action groupoids we can efficiently define morphisms between $\cG$-sets: If a groupoid $\cG$ acts on sets $X$ and $Y$, then a map $\pi: X \to Y$ is called \emph{$\cG$-equivariant} if there is a functor $\pi_*: \cG \ltimes X \to \cG \ltimes Y$ satisfying
\[
\pi^{(0)}_*(x) = \pi(x) \qand \pi^{(1)}_*(\gamma, x) = (\gamma, \pi(x)).
\]
We can also use action groupoids to define invariant subsets. By definition, a subset $U$ of a $\cG$-set $A$ is \emph{$\cG$-invariant} if it is $(\cG \ltimes U)$-invariant in the sense of \S\ref{BasicGroupoid}. More explicitly, this means that $U = \cG U$, where the latter is defined as
\[
\cG U = \{g \cdot u \mid (g, u) \in \cG^{(1)} \times_{\cG^{(0)}}A\}.
\]
\end{no}
\begin{con}[Restriction] If $\cG$ is a groupoid and $Y \subset \cG^{(0)}$ is a subset, then the  \emph{restriction} $\cG|_Y$ is the groupoid with sets of objects and morphisms given by
\[
\cG|_Y^{(0)} \coloneqq Y \qand \cG|_Y^{(1)} \coloneqq \{g \in \cG^{(1)} \mid \{s(g), t(g)\} \subset Y\}
\]
and structure maps given by the restrictions of the structure maps of $\cG$.
\end{con}

\begin{no} If $A$ is a left $\cG$-space, then we denote by $R_A$ the underlying equivalence relation of $R_{\cG \ltimes A}$. The associated partition of $X$ is then denoted by $\cG\backslash A$, and we denote by $\pi: A \to \cG\backslash A$, $a \mapsto [a]$ the canonical projection. In the case of a group action, this recovers the usual quotient of $A$ by $\cG$. We say that the $\cG$-action is \emph{free}, if $\cG \ltimes A$ (or, equivalently, $A \rtimes \cG$) is a principal groupoid. In this case, 
a subset $\cF \subset A$ is called a \emph{strict fundamental domain} for the $\cG$-action if it intersects each class in $R_{\cG \ltimes A}$ in precisely one element; then $\pi$ restricts to a bijection $\cF \to \cG \backslash A$.
 \end{no}
 
 \subsection{Borel groupoids and Haar systems}\label{sec2.2}
\begin{no}\label{borel groupoids} A measurable space is called a \emph{standard Borel space} if it is isomorphic to a Borel subset of a Polish space. If $(A, \cB)$ is a standard Borel space and $\tau$ is a $\sigma$-finite Borel measure on $\cB$, then $(A, \cB, \tau)$ is called a \emph{Lebesgue space}; if moreover $\tau(A) = 1$ then it is called a \emph{standard probability space}. For sake of brevity, we will usually omit the $\sigma$-algebra $\mathcal{B}$ from the notation.

Given a Lebesgue space $(A, \tau)$ we denote by $\cL^\infty(A)$ the space of bounded real-valued Borel functions on $A$ and by $\Linf(A,\tau)$ the space of equivalence classes of such functions, where two functions are considered equivalent if they agree $\tau$-almost everywhere. If the measure $\tau$ is clear from context we also write $\Linf(A) \coloneqq \Linf(A, \tau)$.
\end{no}
\begin{no} Let $\pi:A\to B$ be a Borel map between standard Borel spaces. A \emph{Borel system} of measures for $\pi$ is a collection $\lambda =\{\lambda^b\}_{b\in B}$ of $\sigma$-finite measures with $\lambda^b(A\setminus \pi^{-1}(b))=0$ for every $b\in B$ and such that the map 
\[B\to \mathbb{R}\,,\;\;\;b\mapsto \lambda^b(f)\] is Borel for every Borel function $f:A\to \mathbb{R}$. 
\end{no}
\begin{no} Let  $\mathcal{G}$ be a groupoid and let $\cB$ be a $\sigma$-algebra of subsets of $\cG^{(1)}$; we equip $\cG^{(0)} \subset \cG^{(1)}$ and $\cG^{(2)} \subset \cG^{(1)} \times \cG^{(1)}$ with the respective trace $\sigma$-algebras. We then say that $(\cG, \cB)$ (or just $\cG$ for short) is a \emph{Borel groupoid} if $\mathcal{G}^{(0)}$ and $\mathcal{G}^{(1)}$ are standard Borel spaces and multiplication $\cG^{(2)} \to \cG^{(1)}$ and inversion $\cG^{(1)} \to \cG^{(1)}$ are Borel; by \S \ref{BasicGroupoid} this implies that also the source and target map are Borel, and hence $\cG^{(2)}$ is standard Borel.
\end{no}
\begin{example}\label{ResBorel}
(i) If $\cG$ is a Borel groupoid and $A$ is a standard Borel space, then $A$ together with an action of $\cG$ is called a \emph{Borel $\cG$-space} if the anchor map and the action map are Borel; this implies that $\cG \ltimes A$ and $A \rtimes \cG$ are again Borel groupoids. In particular, if $G$ is a lcsc group and $A$ is a Borel $G$-space, then $G \ltimes A$ and $A \rtimes G$ are Borel groupoids.

\item (ii) If $\cG$ is a Borel groupoid and $Y \subset \cG^{(0)}$ is Borel, then the restriction $\cG|_Y$ is a Borel groupoid when equipped with the trace $\sigma$-algebra.
\end{example}
\begin{defn} If $\mathcal{G}$ is a Borel groupoid, then a Borel system $\lambda = \{\lambda^y\}_{y\in \mathcal{G}^{(0)}}$ for $t:\mathcal{G}^{(1)}\to \mathcal{G}^{(0)}$ of measures is called a \emph{Haar system} provided
\[
  \int_{\mathcal{G}^{(1)}}f(gh) d\lambda^{s(g)}(h)=\int_{\mathcal{G}^{(1)}}f(h)d\lambda^{t(g)}(h) \quad \text{for every }g \in \cG^{(1)}.
\]
\end{defn}
\begin{remark} The existence of a Haar system is a restrictive condition for a Borel groupoid; it forces the groupoid to be the underlying Borel groupoid of a lcsc groupoid with open source and target maps. Moreover, Haar systems are not unique in any reasonable sense. 
\end{remark}
\begin{example} The following examples provide Haar systems in all cases which are of interest to us here:
\item (i) If $G$ is a lcsc group, then a Haar system for $\underline{G}$ is the same a left-Haar measure for $G$.
\item (ii) If $\cG$ is a groupoid whose target map has countable fibers, then a Haar system for $\cG$ is given by counting measures on fibers; this is called the \emph{counting Haar system}.
\item (iii) Let $\cG$ be a groupoid with Haar system $\lambda$ and let $A$ be a Borel $\cG$-space with anchor map $t_A: A \to \cG^{(0)}$. Then a Haar system for $\cR \coloneqq A \rtimes \cG$ is given by $\lambda_A = (\delta_a \otimes \lambda^{t_A(a)})_{a \in A}$. Indeed, if $(a,g) \in \cR^{(1)}$, then $t_A(a) = t(g)$ and $t_A(g^{-1} \cdot a) = s(g)$, hence
\begin{align*}
&\int_{\cR^{(1)}} f((a,g)(b,h)) \, d\lambda_A^{s(a,g)}(b,h)  = \int_{\cG^{(1)}} f(a, gh) d\lambda^{t_A(g^{-1} \cdot a)}(h) \\
=& \int_{\cG^{(1)}} f(a, gh)\, d\lambda^{s(g)}(h)
 =  \int_{\cG^{(1)}} f(a,h)  d\lambda^{t(g)}(h) 
 =  \int_{\cG^{(1)}} f(a,h)  d\lambda^{t_A(a)}(h)\\
  =& \int_{\cR^{(1)}} f(b,h) \, d\lambda_A^{t(a,g)}(b,h).
\end{align*}
\item (iv) In the situation of (iii) we can also define a Haar system $\lambda'_A$ for $\cG \ltimes A$ as the push forward of $\lambda_A$ via the Borel isomorphism 
\[
(A \rtimes \cG)^{(1)} \to (\cG \ltimes A)^{(1)}, \quad (a,g) \mapsto (g^{-1}, g^{-1}\cdot a).
\]
\end{example}

\subsection{Measured groupoids and their Lebesgue spaces}\label{sec2.3}
\begin{no} Let $(\cG,\lambda)$ be a Borel groupoid with Haar system.

\item (i) Given a $\sigma$-finite Borel measure $\nu$ on $\cG^{(0)}$ we define a $\sigma$-finite Borel measure $\nu \circ \lambda$ on $\cG^{(1)}$ by
  \[\nu\circ\lambda(f)=\int_{\mathcal{G}^{(0)}} \int_{\mathcal{G}^{(1)}} f(g)d\lambda^y(g)d\nu(y).\]
We say that $\nu$ is \emph{invariant} (respectively, \emph{quasi-invariant}) under $(\cG, \lambda)$ if $\nu\circ\lambda$ (respectively its measure class) is invariant under the inversion map $g \mapsto g^{-1}$. If $\lambda$ is clear from context, we simply say that $\nu$ is invariant (or quasi-invariant) under $\cG$.
  
  \item (ii) More generally, let $A$ be a Borel $\cG$-space and let $\tau$ be a $\sigma$-finite Borel measure on $A$. We then  say that $\tau$ is \emph{invariant} (respectively, \emph{quasi-invariant}) under $(\cG, \lambda)$ if it is invariant (respectively quasi-invariant) under $(A \rtimes \cG, \lambda_A)$. 
  
  More explicitly, this means that the measure $\tau \circ \lambda_A$ on $(A \rtimes \cG)^{(1)}$ as given by 
  \[
 ( \tau \circ \lambda_A)(f) = \int_A \int_{\cG^{(1)}} f(a,g) \, d \lambda^{t_A(a)}(g) d \tau(a)
  \]
  is invariant (respectively quasi-invariant) under the map $(a,g) \mapsto (g^{-1} \cdot a, g^{-1})$.
\end{no}
\begin{defn} Let $(\cG, \lambda)$ be a Borel groupoid with Haar system.
\begin{enumerate}[(i)]
\item If $\nu$ is a $\cG$-quasi-invariant $\sigma$-finite Borel measure on $\cG^{(0)}$, then $(\cG, \nu, \lambda)$ is called a \emph{measured groupoid}.
\item If $A$ is a Borel $\cG$-space and $\tau$ is a $\cG$-quasi-invariant $\sigma$-finite Borel measure on $A$, then $(A, \tau)$ is called a \emph{Lebesgue $(\cG, \lambda)$-space}.
\item If $(\cG, \nu, \lambda)$ is a measured groupoid then a \emph{Lebesgue $(\cG, \nu, \lambda)$-space} $(A, \tau)$ is a Lebesgue $(\cG, \lambda)$-space such that $(t_A)_*\tau = \nu$.
\end{enumerate}
\end{defn}
\begin{remark} If $(A, \tau)$ is a Lebesgue $(\cG, \nu, \lambda)$-space, then there exists a Borel system $\{ \tau^y\}_{y \in \cG^{(0)}}$ for $t_A: A \to \cG^{(0)}$ such that
\begin{equation}\label{taudisintegration}
\tau(f) = \int_{\cG^{(0)}} \int_A f(a) \, d\tau^y(a) \, d \nu(y).
\end{equation}
\end{remark}
\begin{defn}
A measured groupoid $(\mathcal{G},\nu,\lambda)$ is called \emph{ergodic} if every invariant Borel subset $U\subset \mathcal{G}^{(0)}$ satisfies either $\nu(U)=0$ or $\nu(\mathcal{G}^{(0)}\setminus U)=0$.
\end{defn}
\begin{no} If $\cG$ is a Borel groupoid with countable $t$-fibers and $(\cG, \nu, \lambda)$ is a measured groupoid such that $\lambda$ is the counting Haar system, then we refer to $(\cG, \nu, \lambda)$ as a \emph{discrete measured groupoid}. Moreover, we will usually drop $\lambda$ from notation and just refer to $(\cG, \nu)$ as a discrete measured groupoid.
\end{no}
\begin{example}
(i) If $G$ is a lcsc group, then $(\underline{G},\delta_*, \lambda^*)$ with $\delta_*$ the Dirac mass on the single object $*$ and $\lambda^{*}$ a left-Haar measure of $G$, is an ergodic measured groupoid. If $G$ is discrete and countable, then $(\underline{G},\delta_*)$ is a discrete measured groupoid. 
\item (ii) Consider a  non-singular (i.e.\ measure class preserving Borel) action $G\curvearrowright (A,\tau)$ of a lcsc group $G$ with left-Haar measure $m_G$ on a Lebesgue space. If we define $\lambda_A^a \coloneqq \delta_a \otimes m_G$ for every $a \in A$, then
$(A \rtimes G, \tau, \lambda_A)$ is a measured groupoid, which is ergodic if and only if the action is ergodic in the usual sense. In particular, if $G$ is a countable discrete group, then $(A \rtimes G, \tau)$ is a discrete measured groupoid.
\item (iii) If $(\cG, \nu, \lambda)$ is a measured groupoid and $Y \subset \cG^{(0)}$ is a subset, then there is in general no way to equip the restriction $\cG|_{Y}$ with the structure of a measured groupoid unless $\nu(Y)>0$, in which case $\cG|_Y$ inherits a structure of measured groupoid by restricting the Haar system and the measure on units.
\end{example}
We are going to discuss more interesting examples of discrete measured groupoids in Subsection \ref{sec:3.2} below.
\begin{defn}\label{def:homomorphism}
  Let $(\mathcal{G}, \nu, \lambda)$ and $(\mathcal{G}',\nu',\lambda')$ be measured groupoids.  
  
\item (i) A \emph{strict homomorphism} between $(\mathcal{G},\nu,\lambda)$ and $(\mathcal{G}',\nu', \lambda')$ is a measurable map $f: \mathcal{G}^{(1)}\to \mathcal{H}^{(1)}$ such that $f_* (\nu\circ \lambda)$
is absolutely continuous with respect to $\nu'\circ\lambda'$, and for every composable pair $(g,h) \in \mathcal{G}^{(2)}$ we have
$(f(g),f(h)) \in \mathcal{H}^{(2)}$ and $f(gh) = f(g)f(h)$.

\item (ii) A strict homomorphism $f: \mathcal{G}^{(1)}\to \mathcal{H}^{(1)}$ is called an \emph{isomorphism} if it is bijective and the inverse map $f^{-1}$ is also a strict homomorphism. In this case we say that $(\mathcal{G},\nu, \lambda)$ and $(\mathcal{G}',\nu', \lambda')$ are \emph{strictly isomorphic}.

\item (iii) We say that $(\mathcal{G},\nu, \lambda)$ and $(\mathcal{G}',\nu', \lambda')$ are \emph{weakly isomorphic} if there exist Borel subsets $A\subset \mathcal{G}^{(0)}$ and $B\subset \mathcal{H}^{(0)}$ of positive measure such that the restriction groupoids $\mathcal{G}|_A$ and $\mathcal{H}|_B$ are strictly isomorphic.
\end{defn}
The following criterion for weak isomorphism of ergodic, principal, discrete measured groupoids due to Furman \cite[Proposition 2.3]{Furman99} will be used in the proof of our main theorem for discrete groups:
\begin{prop}\label{prop:furman:weakly}
  Let $(\mathcal{G},\nu)$ and $(\mathcal{H},\nu')$ be ergodic, principal, discrete measured groupoids. Then $(\mathcal{G},\nu)$ and $(\mathcal{H},\nu')$ are weakly isomorphic
provided that there exist Borel maps $p:\mathcal{G}^{(0)}\to \mathcal{H}^{(0)}$ and $q:\mathcal{H}^{(0)}\to \mathcal{G}^{(0)}$ such that 
  \begin{enumerate}[(i)]
  \item$p_*\nu\prec \nu'$ and $q_*\nu'\prec \nu$;
  \item$(p(t(g)),p(s(g)))\in \mathcal{R}_{\mathcal{H}}$  and $(q(t(h)),q(s(h)))\in \mathcal{R}_{\mathcal{G}}$ for all $g\in \mathcal{G}^{(1)}$, $h\in \mathcal{H}^{(1)}$;
  \item$(q(p(x)),x)\in \mathcal{R}_{\mathcal{G}}$ and $(p(q(y)),y)\in \mathcal{R}_{\mathcal{H}}$ for all $x\in \mathcal{G}^{(0)}$, $y\in \mathcal{H}^{(0)}$.
  \end{enumerate}
\end{prop}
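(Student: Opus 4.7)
The plan is to upgrade the ``soft'' orbit-level data $(p,q)$ to an honest strict isomorphism between suitable positive-measure restrictions, exploiting principality of $\mathcal{G}$ and $\mathcal{H}$ together with the Lusin--Novikov uniformization theorem, which applies because in discrete groupoids the target map has countable fibers. Throughout, I would work modulo $\nu$- and $\nu'$-null sets; property (i) guarantees that any null set in one groupoid remains invisible when transported by $p$ or $q$.

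First I would lift $p$ and $q$ to groupoid morphisms. Given $g\in\mathcal{G}^{(1)}$, condition (ii) says $(p(s(g)),p(t(g)))\in R_{\mathcal{H}}$, and since $\mathcal{H}$ is principal there is a \emph{unique} $h\in\mathcal{H}_{p(s(g))}^{p(t(g))}$; setting $p^{\sharp}(g)\coloneqq h$ defines a map $p^{\sharp}:\mathcal{G}^{(1)}\to\mathcal{H}^{(1)}$. Uniqueness forces $p^{\sharp}$ to be a strict groupoid homomorphism, and Borel measurability of its graph follows from Lusin--Novikov, since the graph projects injectively onto $\mathcal{G}^{(1)}$ with Borel image described by the source/target constraints. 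Construct $q^{\sharp}:\mathcal{H}^{(1)}\to\mathcal{G}^{(1)}$ symmetrically. Condition (iii), again combined with principality, then yields unique Borel sections $\gamma_{\mathcal{G}}:\mathcal{G}^{(0)}\to\mathcal{G}^{(1)}$ and $\gamma_{\mathcal{H}}:\mathcal{H}^{(0)}\to\mathcal{H}^{(1)}$ with $s(\gamma_{\mathcal{G}}(x))=q(p(x))$, $t(\gamma_{\mathcal{G}}(x))=x$, and analogously for $\gamma_{\mathcal{H}}$; thus $q^{\sharp}\circ p^{\sharp}$ and $p^{\sharp}\circ q^{\sharp}$ are ``inner'' perturbations of the respective identities.

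Second, I would extract Borel injectivity. Since on each $R_{\mathcal{G}}$-orbit the map $p$ takes values in a single (countable) $R_{\mathcal{H}}$-orbit, $p$ is a countable-to-one Borel map between standard Borel spaces, and Lusin--Novikov yields a countable Borel partition $\mathcal{G}^{(0)}=\bigsqcup_{n}A_{n}$ with each $p|_{A_{n}}$ injective. By countable additivity some $A_{n_{0}}$ has $\nu(A_{n_{0}})>0$, hence by (i) also $\nu'(p(A_{n_{0}}))>0$. Applying the same argument to $q$ and intersecting with $p(A_{n_{0}})$ (and then readjusting $A_{n_{0}}$ accordingly), I obtain a Borel set $A_{1}\subset\mathcal{G}^{(0)}$ of positive measure on which $p$ is injective and such that $q$ is injective on $p(A_{1})$. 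Using $\gamma_{\mathcal{G}}$ to ``shift'' each point $x\in A_{1}$ to the representative $q(p(x))$ in its $R_{\mathcal{G}}$-class, and then restricting again to a Borel subset where this shifted map is the identity on a positive-measure piece (this is where ergodicity lets one actually pin down such a piece from the a.e.\ equality $q\circ p\sim_{R_{\mathcal{G}}}\mathrm{id}$), I arrive at Borel sets $A\subset\mathcal{G}^{(0)}$ and $B\coloneqq p(A)\subset\mathcal{H}^{(0)}$ of positive measure with $p|_{A}:A\to B$ a Borel bijection whose inverse is $q|_{B}$. The restrictions $p^{\sharp}|_{(\mathcal{G}|_{A})^{(1)}}$ and $q^{\sharp}|_{(\mathcal{H}|_{B})^{(1)}}$ are then mutually inverse strict groupoid homomorphisms, producing the required strict isomorphism $\mathcal{G}|_{A}\cong\mathcal{H}|_{B}$.

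The main obstacle is the second step, specifically the passage from the ``inner perturbation'' data $\gamma_{\mathcal{G}},\gamma_{\mathcal{H}}$ to an actual Borel bijection between positive-measure sets. A priori $p$ could collapse many $R_{\mathcal{G}}$-inequivalent points to a single point of $\mathcal{H}^{(0)}$, and merely knowing $q\circ p\sim_{R_{\mathcal{G}}}\mathrm{id}$ a.e.\ does not by itself produce a positive-measure set on which $q\circ p$ is literally the identity. Overcoming this requires the Lusin--Novikov selection, the countable-additivity argument to extract a positive-measure injective piece, and ergodicity (together with quasi-invariance of $\nu,\nu'$) to ensure that the iterative shrinking by null sets does not destroy positivity of measure. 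Once injectivity is achieved, functoriality of $p^{\sharp}$ and $q^{\sharp}$ (forced by principality) makes the remainder of the argument formal.
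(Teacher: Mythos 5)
A preliminary remark: the paper gives no proof of this proposition at all; it is quoted from Furman (Proposition 2.3 of \cite{Furman99}) and used as a black box, so your attempt can only be judged against the statement itself. Your first step (lifting $p,q$ to functors $p^{\sharp},q^{\sharp}$ via principality) and the Lusin--Novikov extraction of a positive-measure set on which $p$ is injective are fine, but there is a genuine gap at the point where you claim that, after shifting by $\gamma_{\mathcal{G}}$, ergodicity produces a positive-measure Borel set on which $x\mapsto q(p(x))$ is \emph{literally} the identity, so that $q|_{p(A)}$ inverts $p|_{A}$. This is false, and ergodicity cannot rescue it: take $\mathcal{G}=\mathcal{H}$ to be the (principal, ergodic, pmp) orbit groupoid of an irrational rotation $T$ of the circle, $p=\mathrm{id}$ and $q=T$. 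Then (i)--(iii) hold at every point, yet $q\circ p=T$ has no fixed points whatsoever, so no nonempty set of the required kind exists. Since your concluding step --- that $p^{\sharp}$ and $q^{\sharp}$ restrict to mutually inverse strict homomorphisms --- and your only route to the two-sided absolute-continuity requirement in the definition of strict isomorphism both rest on this literal inversion, the argument as written does not close. A smaller slip: your justification that $p$ is countable-to-one (``each orbit maps into a single countable orbit'') does not bound the fibres; the correct reason is (iii): if $p(x_1)=p(x_2)$ then $x_1\sim q(p(x_1))=q(p(x_2))\sim x_2$, so each fibre of $p$ lies in a single countable $R_{\mathcal{G}}$-class.

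The gap is repairable because the literal inverse is not needed. Keep a positive-measure Borel set $A$ with $p|_A$ injective and put $B\coloneqq p(A)$, which has $\nu'(B)>0$ by (i). Condition (ii) for $p$, together with (ii) for $q$ and (iii), gives $x_1\sim_{R_{\mathcal{G}}}x_2$ if and only if $p(x_1)\sim_{R_{\mathcal{H}}}p(x_2)$; combined with principality and injectivity of $p|_A$ this makes $p^{\sharp}$ a Borel bijection from $(\mathcal{G}|_A)^{(1)}$ onto $(\mathcal{H}|_B)^{(1)}$ (surjectivity: an arrow $h$ with endpoints $p(x_1),p(x_2)$ forces $x_1\sim x_2$, and then $h=p^{\sharp}(g)$ for the unique $g\in\mathcal{G}^{x_1}_{x_2}$). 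The forward absolute continuity comes from $p_*\nu\prec\nu'$. For the backward one, note that $(p|_A)^{-1}(y)\sim_{R_{\mathcal{G}}}q(y)$ for every $y\in B$ by (iii); since $\nu$ is quasi-invariant, the $R_{\mathcal{G}}$-saturation of a $\nu$-null set is $\nu$-null, so $q_*\nu'\prec\nu$ upgrades to $\bigl((p|_A)^{-1}\bigr)_*(\nu'|_B)\prec\nu$. (Alternatively, shrink $B$ to the set where the Radon--Nikodym derivative of $(p|_A)_*(\nu|_A)$ with respect to $\nu'$ is positive.) Note also that with the hypotheses holding at every point, as stated here, ergodicity plays no role; it only matters in Furman's original almost-everywhere formulation, not for the step you invoked it for.
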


\subsection{Measurable bundles of Banach spaces}\label{sec2.4}

\begin{no} If a group $G$ acts on a set $A$, then it also acts on the set of real-valued functions $\cF(A, \bR)$ on $A$ by $g \cdot f(a) \coloneqq f(g^{-1} \cdot a)$, and the fixpoints are precisely the $G$-invariant functions. On the other hand, if a groupoid $\cG$ acts on a set $A$, then there is a no induced action on functions, since there is no reasonable way to define an anchor map. Nevertheless, to define cohomology, we need to introduce a notion of ``invariant functions''. 
Following \cite{sarti:savini25} we now discuss a suitable framework to discuss invariants in the context of measurable bounded cohomology. We refer also to \cite{fell:doran} for a deeper discussion about these topics.
\end{no}
\begin{no}
Let $(\Omega, \tau)$ be a Lebesgue space and let $\calE=(E_\omega)_{\omega\in \Omega}$ be a family of Banach spaces indexed by $\Omega$. We refer to a map
\[
\sigma: \Omega \to \bigcup_{\omega \in \Omega} E_\omega \quad \text{such that} \quad \sigma(\omega) \in E_\omega \text{ for all }\omega \in \Omega
\]
as a \emph{section of $\cE$}. A collection $\calM$ of sections of $\cE$ is called a \emph{measurable structure} if the following hold:
\begin{enumerate}[(i)]
\item$\sigma_1+\sigma_2\in \calM$ whenever $\sigma_1,\sigma_2 \in \calM$;
\item $\varphi \cdot \sigma \in \calM$ whenever $\sigma\in \calM$ and $\varphi: \Omega \to \bR$ is $\tau$-measurable;
\item if $\sigma\in \calM$ then $\Omega \to \bR$, $\omega \mapsto \lVert \sigma(\omega) \rVert_{E_\omega}$ is $\tau$-measurable;
\item if $(\sigma_n)$ is a net in $\calM$ and $\sigma_n(\omega)\rightarrow \sigma(\omega)$ for $\tau$-almost every $\omega\in \Omega$, then $\sigma \in \calM$.
 \end{enumerate} 
If $\cM$ is a measurable structure, then we refer to $(\cE, \cM)$ as a \emph{measurable bundle (of Banach spaces over $(\Omega, \tau)$)} and to elements of $\cM$ as \emph{measurable sections} of $(\cE, \cM)$. We will often drop $\cM$ from notation and simply talk about measurable section of the measurable bundle $\cE$. 

\item Given $\sigma \in \cM$ we set
\[
\lVert \sigma \rVert:  \Omega \rightarrow [0,\infty), \ \ \ \lVert \sigma \rVert(x)\coloneqq \lVert \sigma(x) \rVert_{E_x}. 
\]
and define the space of \emph{bounded measurable sections} of $\cE$ by
\[\Linf(\Omega,\mathcal{E})\coloneqq \{\sigma\in \mathcal{M}\,|\,  \lVert \sigma \rVert \in\Linf(\Omega) \}/_{\sim_{\tau}}\,,\]
where we identify sections that coincide $\tau$-almost everywhere. This is a Banach space with norm given by 
$\|[\sigma]\|_{\Linf(\Omega,\mathcal{E})} \coloneqq \|\sigma\|$.
 
\item A \emph{morphism} between two measurable bundles $\cE$ and $\cF$ over $(\Omega,\tau)$ is a collection $\varphi=(\varphi_\omega)_{\omega\in \Omega}$ of bounded linear maps $\varphi_\omega:E_\omega \rightarrow F_\omega$ such that
\begin{enumerate}[(i)]
\item for every measurable section $\sigma$ of $\cE$ the map  $\omega\mapsto \varphi_\omega(\sigma(\omega)) $ is a measurable section of $\mathcal{F}$;
\item there exists a constant $C \geq 0$ such that $\| \phi_\omega\|_{\mathrm{op}} < C$ for $\tau$-almost all $\omega$.
\end{enumerate}
Note that every morphism $\phi: \cE \to \cF$ of bundles induces a bounded linear map $\phi_*: \Linf(\Omega,\mathcal{E}) \to \Linf(\Omega,\mathcal{F})$.
 \end{no}
  \begin{con}[Pullback bundle]
  Given a Borel map $\pi:B\rightarrow A$ between Lebesgue spaces $(A,\nu)$ and $(B,\mu)$, a measurable bundle $\calE=(E_a)_{a\in A}$ over $A$ gives rise to a measurable bundle $\mathcal{F}= (F_b)_{b \in B}$ over $B$ by setting
  \[
  F_b=(\pi^\ast E)_b\coloneqq E_{\pi(b)} \quad \text{for all }b \in B.
  \]
  This bundle is denoted by $\pi^*\calE$ and called the  \emph{pullback} of $\calE$ via $\pi$. We equip it with the measurable structure $\pi^*\calM$ generated by the set  
$$\calN\coloneqq \{ \sigma\circ \pi \,,\, \sigma\in \calM\},$$ where
$\calM$ is the measurable structure for $\calE$ \cite[Section 3]{sarti:savini25}, \cite{fell:doran}.
\end{con}
\begin{no}
 From now on,  $(\calG,\nu, \lambda)$ denotes a measured groupoid. We are going to be interested in certain measurable bundles over $(\Omega, \tau) \coloneqq (\cG^{(0)}, \nu)$. Given such a bundle $\cE = (E_y)_{y \in \cG^{(0)}}$ we abbreviate $\underline{\cE} \coloneqq \bigsqcup_{y \in \cG^{(0)}} E_y$; note that this fibers naturally over $\cG^{(0)}$ via the map $t_{\cE}: \underline{\cE} \to \cG^{(0)}$ which send $E_y$ to $y$, and we are interested in certain actions of $\cG$ on $\underline{\cE}$ with anchor map $t_{\cE}$. Note that if $\cG \curvearrowright \underline{\cE}$ is such an action, then for every $v \in E_{s(g)}$  we have $g \cdot v \in E_{t(g)}$.
 \end{no}
 \begin{defn} Let $\cE = (E_y)_{y \in \cG^{(0)}}$ be a measurable bundle over $(\cG^{(0)}, \nu)$. Then a left-action $\cG \curvearrowright \underline{\cE}$ with anchor map $t_{\cE}$ as above if called an
 \emph{isometric left $\calG$-action} on $\mathcal{E}$ if the following properties hold:
\begin{enumerate}[(i)]
\item For every $g\in \mathcal{G}^{(1)}$, the function 
$ E_{s(g)}\rightarrow E_{t(g)}$, $v \mapsto g\cdot v$ is a linear isometry.
\item If $\sigma$ is a measurable section of $\mathcal{E}$, then the map $g\mapsto g\cdot \sigma(s(g))$
is a measurable section of $t^*\calE$ over $(\calG^{(1)}, \nu \circ \lambda)$.
\end{enumerate}
We then refer to $\cE$ together with this action as a \emph{measurable $(\cG, \nu, \lambda)$-bundle}, and to the subspace
\[
\Linf(\cG^{(0)},\mathcal{E})^{\cG} \coloneqq \{[\sigma] \in \Linf(\cG^{(0)},\mathcal{E}) \mid \forall\, g \in \cG^{(1)}:\; \sigma(t(g))=g\cdot \sigma(s(g))\} \subset \Linf(\cG^{(0)},\mathcal{E})
\]
as the \emph{subspace of $\cG$-invariants}.
\end{defn}
 \begin{con}[Measurable bundles from Lebesgue spaces]\label{LBundle}
 Let $t:(A, \tau)\to(Y,\nu)$ be a Borel map between Lebesgue spaces and suppose that $\tau$ disintegrates as $\tau=\int_Y\tau^y d\nu(y)$; we define the \emph{associated measurable bundle} $\cL_{A}$ as follows. 
Given $y \in Y$, we denote by $A^y$ the fiber of $y$ under the anchor map $t_A:A \to Y$. We then obtain a measurable bundle $\cL_A = (\Linf(A^y, \tau^y))_{y \in Y}$ by declaring that a section $\sigma: Y \to \bigcup \Linf(A^y, \tau^y)$ is measurable, provided that the map
\[
\sigma_A: A \to \bR, \quad \sigma_A(a) = \sigma(t_A(a))(a)
\] 
is measurable. Note that if $A$ is a Lebesgue $(\mathcal{G},\nu,\lambda)$-space, the above construction yields a measurable $\mathcal{G}$-bundle: if $g \in \cG^{(1)}$, then $g^{-1}$ defines a map $A^{s(g)} \to A^{t(g)}$, hence given $f \in \Linf(A^{s(g)}, \tau^{s(g)})$ we may define $g \cdot f \in  \Linf(A^{t(g)}, \tau^{t(g)})$ by $g \cdot f(a) \coloneqq  f(g^{-1} \cdot a)$. 
One can check that this defines an isometric left $\cG$-action.
\end{con}
\begin{no} We now return to the problem of defining invariant functions for a Lebesgue $(\cG, \nu, \lambda)$-space $(A, \tau)$. As above we define $A^y \coloneqq  t_A^{-1}(y)$ and given a Borel function $f: A \to \bR$ we denote by $f_y \coloneqq  f|_{A^y}$ the restriction of $A^y$. We then have a natural isomorphism
\begin{equation}\label{fy}
\iota_A: \Linf(A, \tau) \to \Linf(\cG^{(0)}, \cL_A), \quad f \mapsto (f_y)_{y \in \cG^{(0)}},
\end{equation}
which allows us to interpret elements of $\Linf(A, \tau)$ as essentially bounded measurable sections of $\cL_A$. The elements of $\Linf(A, \tau)^{\cG} \coloneqq \iota_A^{-1}(\Linf(\cG^{(0)}, \cL_A)^{\cG})$ can then be interpreted as \emph{invariant function classes}. Explicitly, $[f] \in \Linf(A, \tau)$ is $\cG$-invariant if and only if for every $g \in \cG^{(1)}$ and $\tau^{t(g)}$-almost every $a \in A^{t(g)}$ we have $f(g^{-1}\cdot a) = f(a)$. 
\end{no}
\begin{example}[Tautological bundles]\label{BEBF}
   Let $(\calG,\nu,\lambda)$ be a measured groupoid and let $\cG^{[n]}$ be as in Example \ref{ActionExamples}.(iv); we equip  $\mathcal{G}^{[n]} \subset (\cG^{(1)})^n$ with the induced Borel structure. Given $y \in \cG^{(0)}$ we define $\lambda_{[n]}^y\coloneqq \lambda^y\otimes\cdots\otimes \lambda^y$ so that $(\cG^{[n]}, \nu\circ \lambda_{[n]} )$ is a Lebesgue $\cG$-space. We then refer to the associated measurable bundle $\cL^n\coloneqq \mathcal{L}_{\mathcal{G}^{[n]}}$ as the \emph{$n$-th tautological bundle} of $\cG$. Then a function  in  $\Linf(\mathcal{G}^{[n]}, \nu\circ \lambda_{[n]}) \cong \Linf(\cG^{(0)}, \cL^n)$ is $\cG$-invariant if for all $g \in \cG^{(1)}$ and $\lambda^{t(g)}_{[n]}$-almost all $(g_0, \dots, g_{n-1}) \in (\cG^{t(g)})^n$,
\begin{equation}\label{TautInv}
\phi(g^{-1}g_0,\ldots,g^{-1}g_{n-1}) = \phi(g_0, \dots, g_{n-1}).
\end{equation}
Note that if $(\cG, \nu)$ is a \emph{discrete} measured groupoid, then \eqref{TautInv} holds for all $g \in \cG^{(1)}$ and \emph{all} $(g_0, \dots, g_{n-1}) \in (\cG^{t(g)})^n$.
\end{example}
\begin{no} If $\mathcal{E}$ and $\mathcal{F}$ are measurable $(\cG, \nu, \lambda)$-bundles, then a morphism $\varphi: \cE \to \cF$ is called a 
$\mathcal{G}$-morphism if 
\[\varphi_{t(g)}(g\cdot v)= g\cdot \varphi_{s(g)}(v) \quad \text{for all }g \in \cG^{(1)} \text{ and }v\in E_{s(g)}.\]
This implies that  $\phi_*: \Linf(\cG^{(0)},\mathcal{E}) \to \Linf(\cG^{(0)},\mathcal{F})$ restricts to a map on the level of $\cG$-invariants $\phi_*: \Linf(\cG^{(0)},\mathcal{E})^{\cG} \to \Linf(\cG^{(0)},\mathcal{F})^{\cG}$. Unravelling definitions, one sees that measure class preserving $\cG$-maps between $(\cG, \nu, \lambda)$-spaces induce $\cG$-morphisms of the corresponding bundles and thus well-defined maps on the level of invariants.
\end{no}

\begin{es}[Banach $\cG$-modules and constant bundles]\label{BanachModule}
A \emph{measurable Banach $\mathcal{G}$-module} is a Banach space $E$ together with a map
$$\mathcal{G}^{(1)}\times E\to E\,,\;\;\; (g,v)\mapsto g\cdot v$$ satisfying $g\cdot (h\cdot v)= gh \cdot v$ for every composable pair $(g,h)$ and $v\in E$ and such that the orbital map 
$$\mathcal{G}^{(1)}\to E\,,\;\;\;v\mapsto g\cdot v$$ is measurable for every $v\in E$ (cf.\  \cite{sarti:savini:23}). It is called \emph{isometric} if each of the maps $E \to E$, $v \mapsto g \cdot v$ is an isometry. 
For example, we can turn $\bR$ into an isometric measurable Banach $\cG$-module by declaring every $g \in \cG^{(1)}$ to act by the identity; this is called the \emph{trivial $\mathcal{G}$-module}. More interesting examples will arise naturally in the 
context of transverse measured groupoids (see Construction \ref{PullbackModules} below).

\item Every isometric measurable Banach $\mathcal{G}$-module $E$ gives rise to a measurable $(\cG, \nu, \lambda)$-bundle $\underline{E} = (E)_{y\in \mathcal{G}^{(0)}}$ over $\cG^{(0)}$, called the \emph{constant bundle} with fiber $E$, 
whose measurable sections are just the $\nu$-measurable maps $\mathcal{G}^{(0)}\to E$ and such the isometric left $\cG$-action is given by $\cG^{(1)} \times E \to E$, $(g,v) \mapsto g \cdot v$. By definition we then have
$\Linf(\cG^{(0)}, \underline{E}) = \Linf(\cG^{(0)}, E)$, and we write $\Linf(\cG^{(0)}, E)^{\cG} \coloneqq \Linf(\cG^{(0)}, \underline{E})^{\cG}$. More explicitly, a function class $f$ defines a class in
$\Linf(\cG^{(0)}, E)$ if for all $g \in \cG^{(1)}$ and almost all $v \in E$ we have $f(t(g)) = g \cdot f(s(g))$.
 For the bundle $\underline{\bR}$ corresponding to the trivial module we obtain
$\Linf(\mathcal{G}^{(0)}, \underline{\bR}) =\Linf(\cG^{(0)}, \nu)$.
\end{es}



\subsection{Measurable bounded cohomology of discrete measured groupoids}\label{sec2.5}
Throughout this subsection, $(\cG, \nu, \lambda)$ denotes a measured groupoid with counting Haar system $\lambda$. 
In particular, $\Linf(\mathcal{G}^{[n+1]}, \lambda^y_{[n+1]})$ boils down to the space of bounded functions, that we denote by 
$\ell^\infty((\cG^y)^{n+1})$.
\begin{con}[Homogeneous bar resolution] 
As in Example \ref{BEBF}, denote by $\cL^n \coloneqq   (\ell^\infty((\cG^y)^{n+1}))_{y\in \mathcal{G}^{(0)}}$ the tautological bundles over $\cG^{(0)}$ and denote by $\underline{\bR}$ the trivial bundle. There is a $\cG$-morphism $d_{\cG}^{-1} = (d_{y}^{-1})_{y \in \cG^{(0)}}: \underline{\bR} \to \cL^0$ such that $d_y^{-1}: \bR \to \ell^\infty(\cG^y)$ is the inclusion of constants. Moreover, for every $n \in \bN_0$ there is a $\cG$-morphism $d_{\cG}^n = (d_{y}^{n})_{y \in \cG^{(0)}}: \cL^n \to \cL^{n+1}$ such that
\begin{gather*}
  d_y^n: \ell^\infty((\cG^y)^{n+1}) \to  \ell^\infty((\cG^y)^{n+2}), \\ d_y^nf(g_0, \dots, g_{n+1}) \coloneqq    \sum_{j=0}^{n+1} (-1)^j f(g_0, \dots, \widehat{g_j}, \dots, g_{n+1}).
\end{gather*}
On the level of bounded measurable sections the sequence of $\cG$-bundles and $\cG$-morphisms
\begin{equation}\label{StdRes}
\underline{0} \to \underline{\bR} \xrightarrow{d_{\cG}^{-1}} \cL^0 \xrightarrow{d_{\cG}^{0}} \cL^1 \to \dots,
\end{equation}
induces an exact complex of dual Banach spaces and weak-$*$-continuous maps, called the \emph{augmented homogeneous bar resolution} for $\cG$, and given by
\[
0 \to  \Linf(\cG^{(0)},\underline{\bR}) \xrightarrow{d_{\cG}^{-1}} \Linf(\cG^{(0)}, \cL^0) \xrightarrow{d_{\cG}^{0}} \Linf(\cG^{(0)}, \cL^1) \to \dots,
\]
or, more explicitly,
\begin{equation*}\label{AugmentedStandardResolution}
0 \to \Linf(\cG^{(0)}) \xrightarrow{d_{\cG}^{-1}}  \Linf(\cG^{[1]})  \xrightarrow{d_{\cG}^0} \Linf(\cG^{[2]})  \xrightarrow{d_{\cG}^1} \Linf(\cG^{[3]})  \xrightarrow{d_{\cG}^2}  \dots
\end{equation*}
where $d_{\cG}^{{-1}}$ is given by $d_{\cG}^{-1}(f)(g_0) = f(t(g_0))$ and for $n \geq 0$ we have
\[
d_{\cG}^n f(g_0, \dots, g_{n+1}) = \sum_{j=0}^{n+1} f(g_0, \dots, \widehat{g_j}, \dots, g_{n+1}).
\]
The subcomplex obtained by deleting the augmentation, i.e.
\begin{equation}\label{BarResolution}
0 \to \Linf(\cG^{[1]})  \xrightarrow{d_{\cG}^0} \Linf(\cG^{[2]})  \xrightarrow{d_{\cG}^1} \Linf(\cG^{[3]})  \xrightarrow{d_{\cG}^2}  \dots,
\end{equation}
is called the \emph{homogeneous bar resolution} for $\cG$. Since the differentials are induced by $\cG$-morphisms, we can pass to the subcomplex 
\[
0 \to \Linf(\cG^{[1]})^{\cG}  \xrightarrow{d_{\cG}^0} \Linf(\cG^{[2]})^{\cG}  \xrightarrow{d_{\cG}^1} \Linf(\cG^{[3]})^{\cG}  \xrightarrow{d_{\cG}^2}  \dots
\]
of $\cG$-invariants. By definition, the cohomology of this complex is the measurable bounded cohomology of $(\cG, \nu)$ with coefficients in $\underline{\bR}$:
\end{con}
\begin{defn} The \emph{measurable bounded cohomology} of $(\cG,\nu, \lambda)$ with coefficients in the trivial line bundle $\underline{\bR}$ over $\cG^{(0)}$ is the seminormed space given by
\[
\Hmb^k((\cG,\nu, \lambda); \underline{\bR}) \coloneqq  \mathrm{H}^k(\Linf(\cG^{[\bullet + 1]})^{\cG}, d_{\cG}^{\bullet}) \quad (k \geq 0)
\]
with the seminorm induced by the essential supremum norm on cochains.
\end{defn}
It is easy to see that measurable bounded cohomology is invariant under strict isomorphism of measured groupoids; more remarkable is the fact that for \emph{ergodic} measured groupoids it is also invariant under weak isomorphism \cite{sarti:savini:23}. 
\begin{prop}\label{proposition:weak:iso:BC}
Let $(\cG,\nu,\lambda)$ and $(\mathcal{H},\nu',\lambda')$ be weakly isomorphic ergodic measured groupoids. Then there is an isomorphism
\[\Hmb^\bullet((\cG,\nu,\lambda);\underline{\mathbb{R}})\cong\Hmb^\bullet((\mathcal{H},\nu',\lambda');\underline{\mathbb{R}}).\]
\end{prop}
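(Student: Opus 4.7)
The plan is to reduce the statement to a restriction argument and then construct mutually inverse cochain maps using ergodicity together with a Borel section.

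By the definition of weak isomorphism, there are positive-measure Borel subsets $A \subset \cG^{(0)}$ and $B \subset \cH^{(0)}$ with $\cG|_A$ strictly isomorphic to $\cH|_B$. Under strict isomorphism the homogeneous bar resolutions are chain-isomorphic, and ergodicity passes to positive-measure restrictions (as does the counting-type structure of the Haar system), so it suffices to prove the following key lemma: \emph{if $(\cG,\nu,\lambda)$ is ergodic and $A \subset \cG^{(0)}$ is Borel with $\nu(A)>0$, then the natural restriction map $r^\bullet: \Hmb^\bullet((\cG,\nu,\lambda);\underline{\bR}) \to \Hmb^\bullet((\cG|_A,\nu|_A,\lambda|_A);\underline{\bR})$ is an isomorphism.}

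Restriction at the cochain level is defined by pulling back along the inclusion $(\cG|_A)^{[n+1]}\hookrightarrow \cG^{[n+1]}$; this is well-defined since $\nu|_A\circ(\lambda|_A)_{[n+1]}$ is absolutely continuous with respect to $\nu\circ\lambda_{[n+1]}$, it sends $\cG$-invariants to $\cG|_A$-invariants, and it commutes with the coboundary, yielding the map $r^\bullet$. For an inverse, invoke ergodicity: the $\cG$-saturation of $A$ is conull in $\cG^{(0)}$, so by a measurable selection argument (Jankov--von Neumann applied to the Borel map $s:t^{-1}(\cdot)\cap s^{-1}(A)\to \cG^{(0)}$) there is a Borel section $\sigma:\cG^{(0)}\to \cG^{(1)}$, defined modulo a $\nu$-null set, with $t\circ\sigma=\id$, $s\circ\sigma\in A$, and $\sigma|_A = 1_A$. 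For $f\in\Linf((\cG|_A)^{[n+1]})^{\cG|_A}$, define
\[
\text{ext}^n(f)(g_0,\dots,g_n) \coloneqq f\bigl(\sigma(y)^{-1}g_0\sigma(s(g_0)),\dots,\sigma(y)^{-1}g_n\sigma(s(g_n))\bigr), \quad y=t(g_0).
\]
Each entry $\sigma(y)^{-1}g_i\sigma(s(g_i))$ has both source and target in $A$, hence lies in $\cG|_A^{(1)}$. The $\cG$-invariance of $\text{ext}^n(f)$ follows by applying $\cG|_A$-invariance of $f$ with the twisting element $\sigma(y)^{-1}g\sigma(y')\in\cG|_A^{(1)}$ for $g\in\cG^{(1)}$ with $t(g)=y$, $s(g)=y'$. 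Since $\sigma|_A = 1_A$, one immediately verifies $r^n\circ\text{ext}^n=\id$ on cochains.

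The main obstacle will be proving that $\text{ext}^n\circ r^n$ is \emph{chain-homotopic} to the identity on $\Linf(\cG^{[\bullet+1]})^{\cG}$. After using $\cG$-invariance of $F$ with the element $\sigma(y)$ one finds
\[
\text{ext}^n(r^n F)(g_0,\dots,g_n) = F\bigl(g_0\sigma(s(g_0)),\dots,g_n\sigma(s(g_n))\bigr),
\]
and the differing right-twists by $\sigma(s(g_i))$ cannot be absorbed by a single $\cG$-action. I would construct a standard bar-resolution homotopy $h^n:\Linf(\cG^{[n+1]})^{\cG}\to\Linf(\cG^{[n]})^{\cG}$ built from a signed sum of ``mixed'' evaluations interpolating between $(g_0,\dots,g_n)$ and $(g_0\sigma(s(g_0)),\dots,g_n\sigma(s(g_n)))$, namely
\[
h^n(F)(g_0,\dots,g_{n-1}) = \sum_{i=0}^{n-1}(-1)^i F\bigl(g_0,\dots,g_i,\,g_i\sigma(s(g_i)),\,g_{i+1}\sigma(s(g_{i+1})),\dots,g_{n-1}\sigma(s(g_{n-1}))\bigr),
\]
and verify the homotopy identity $d_\cG\circ h^n + h^{n+1}\circ d_\cG = \id - \text{ext}^n\circ r^n$ by the standard combinatorial cancellation on the bar complex, using $\cG$-invariance of $F$ to keep the operator inside the invariant subcomplex. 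Boundedness and Borel measurability of all constructed operators reduce to the Borelness of $\sigma$, and the resulting equivalence on the cochain level descends to mutually inverse isomorphisms on cohomology, proving the reduced claim and hence the proposition.
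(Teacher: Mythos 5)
The paper does not actually prove this proposition: it is quoted from \cite{sarti:savini:23}, so there is no internal argument to compare yours against. Your route --- reduce, via invariance under strict isomorphism, to the key lemma that for an ergodic groupoid the restriction map to $\cG|_A$ with $\nu(A)>0$ is an isomorphism, and prove that lemma by choosing an a.e.\ defined Borel section $\sigma$ of the target map with sources in $A$ (possible since ergodicity makes the saturation of $A$ conull), together with the extension operator and a prism homotopy --- is a legitimate, self-contained argument in the setting in which the proposition is stated and used here, namely under the standing assumption of Section 2.5 that $\lambda$ is the counting Haar system. The combinatorial core is right: $\mathrm{ext}$ is a chain map (immediate, since all entries of a tuple share the same target, so omitting a coordinate does not change the twisting element $\sigma(y)$), $r\circ\mathrm{ext}=\mathrm{id}$ because $\sigma|_A=1_A$, and with your formula for $h^n$ the homotopy identity comes out as $d_\cG h + h d_\cG = \mathrm{ext}\circ r - \mathrm{id}$ (in degree zero, $h^1(d^0F)(g_0)=F(g_0\sigma(s(g_0)))-F(g_0)$), so either negate $h$ or restate the identity; this sign is harmless.

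Two points you assert rather than prove deserve a line each. First, all your operators are precompositions with Borel maps, and for them to descend to $\Linf$-classes you need that preimages of null sets are null; this follows because for a quasi-invariant $\nu$ on a groupoid with counting Haar system the saturation of a $\nu$-null set is null, which also guarantees that $\sigma(s(g_i))$ is defined for a.e.\ relevant tuple. Second, and more substantively, your homotopy genuinely requires the counting Haar system: for a non-atomic Haar system the prism maps $(g_0,\dots,g_{n-1})\mapsto(g_0,\dots,g_i,g_i\sigma(s(g_i)),\dots)$ land in fiberwise null subsets of $\cG^{[n+1]}$, so $h^n$ would not be well defined on $\Linf$-classes, and likewise the pointwise use of invariance would need care. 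Since the proposition sits under the discreteness assumption of Section 2.5 and is only applied to discrete measured groupoids in this paper, this is a limitation of scope rather than an error, but you should state explicitly that your proof covers the counting-Haar-system case.
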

\begin{no}\label{FunctorialCharacterization} It would be desirable to have a functorial characterization of measurable bounded cohomology of measured groupoids along the lines of \cite{BuhlerThesis, monod:libro}; however, this seems to be beyond the reach of current technology. On the other hand, for \emph{discrete} measured groupoid such a characterization was established by Savini and the second-named author in \cite{sarti:savini25}. Since we will use it in the proof of our main theorem, let us give a brief overview.

Thus let $(\cG, \nu)$ be a discrete measured groupoid. In \cite{sarti:savini25}, the authors introduce the notion of strong augmented resolutions of a $(\cG, \nu)$-bundle by relatively injective measurable $(\cG, \nu)$-bundles. They then show that \eqref{StdRes} is a strong augmented resolution of $\underline{\bR}$ by relatively injective measurable bundles, and that any such resolution can be used to compute measurable bounded cohomology. More precisely they prove that if $\mathcal{A}^\bullet$ and  $\mathcal{B}^\bullet$ are two strong resolutions of $\underline{\bR}$ by relatively injective measurable $(\mathcal{G},\nu)$-bundles, then the following hold:

\item (i) There exists $\cG$-morphisms which makes the following diagram commute:
\[\begin{xy}\xymatrix{
0 \ar[r]  & \underline{\mathbb{R}} \ar[r] \ar@{=}[d]&\mathcal{A}^0  \ar[r] \ar[d] & \mathcal{A}^1 \ar[r] \ar[d]&\dots\\
0 \ar[r]  & \underline{\mathbb{R}}  \ar[r] & \mathcal{B}^0  \ar[r] & \mathcal{B}^1 \ar[r]&\dots\\
}
\end{xy}\]
\item (ii) Any choice of $\cG$-morphisms as in (i) induces a commutative ladder
\[\begin{xy}\xymatrix{
0 \ar[r] & \Linf(\cG^{(0)}, \mathcal{A}^0)^{\cG}  \ar[r] \ar[d] & \Linf(\cG^{(0)},\mathcal{A}^1)^\cG \ar[r] \ar[d]&\dots\\
0 \ar[r] & \Linf(\cG^{(0)},\mathcal{B}^0)^\cG  \ar[r] & \Linf(\cG^{(0)},\mathcal{B}^1)^{\cG} \ar[r]&\dots\\
}
\end{xy}\]
of cochain complexes of Banach spaces and the induced maps
\[
\Hm^k( \Linf(\cG^{(0)}, \mathcal{A}^\bullet)^{\cG} ) \to \Hm^k( \Linf(\cG^{(0)}, \mathcal{B}^\bullet)^{\cG} ) \quad (k \geq 0).
\]
are natural isometric isomorphisms and independent of the choice of $\cG$-morphisms. 
\end{no}
\begin{remark} We emphasize a subtlety of the above functorial characterization: For the maps in (ii) to induce isometric isomorphisms, we need to assume that they are induced from equivariant bundle maps as in (i). 
There is currently no functorial characterization of measurable bounded cohomology of discrete groupoids in terms of maps between the corresponding Banach spaces of sections only.
\end{remark}

\section{Transverse measured groupoids}\label{sec:trans:system}
Throughout this section, $G$ denotes a unimodular lcsc group and $m_G$ denotes a fixed choice of Haar measure for $G$.

\subsection{Transverse systems and transverse measures}\label{sec3.1}

\begin{no} Consider a probability measure preserving (pmp) Borel action of $G$ on a standard Borel probability space $(X,\mu)$. Given a Borel subset $Y \subset X$ and $x \in X$ we define the associated \emph{hitting time set} $$Y_x \coloneqq \{g \in G \mid g x \in Y\}\,;$$
   we also define the \emph{return time set} of $Y$ as
\[
\Lambda(Y) \coloneqq \bigcup_{y \in Y} Y_y = \{g \in G \mid g y \cap Y \neq \emptyset\}.
\]
We recall from the introduction that a Borel subset $Y \subset X$ is called a \emph{cross section} if $GY= X$ and for every $x \in X$ the \emph{hitting time set} $Y_x$ is locally finite in the sense that for every compact subset $L \subset G$ we have $|Y_x \cap L| < \infty$. We then refer to $(X, \mu, Y)$ as a \emph{transverse $G$-system}. In this case, each of the sets $Y_x$ is countable, since $G$ is $\sigma$-compact; however, in general the subsets $Y_x \subset G$ need not be discrete, and $\Lambda(Y)$ need not even to be countable.
\end{no}
\begin{example}[Integrable transverse systems] Let $Y \subset X$ be a Borel subset such that $G Y = X$ and such that the point process $x \mapsto Y_x$ is locally integrable, i.e.\
\begin{equation}\label{IntegrabilityDef}
\int_X |Y_x \cap L| \, d \mu(x) < \infty \quad \text{for every compact subset }L \subset G.
\end{equation}
This implies that for every compact $L \subset G$ the intersection $Y_x \cap L$ is finite for almost all $x$; since $G$ is $\sigma$-compact this implies that the subset
\[
X_0 = \{x \in X \mid Y_x \text{ is locally finite}\} \subset X
\]
is conull, and one can check that it is Borel. Since $Y_{gx} = Y_xg^{-1}$ for all $g \in G$ and $x \in X$ it is also $G$-invariant, hence if we define $Y_0 \coloneqq X_0 \cap Y$, then $Y_x = (Y_0)_x$ for all $x \in X_0$. This shows that 
$(X_0, \mu|_{X_0}, Y_0)$ is a transverse $G$-system whose associated point process is locally integrable; we refer to such a system as an \emph{integrable transverse system}.
\end{example}
\textbf{From now on, $(X, \mu, Y)$ denotes a transverse $G$-system.}
\begin{lemma}\label{CountFib1} The action map $a:G\times Y\to X$, $a(g,y) \coloneqq gy$ has countable fibers.
\end{lemma}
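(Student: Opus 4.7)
The plan is to reduce the assertion about countability of fibers of $a$ to the countability of hitting time sets $Y_x$, which is built into the definition of a transverse $G$-system.

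First, I would fix an arbitrary $x \in X$ and describe the fiber $a^{-1}(x) \subset G \times Y$ explicitly. Given $(g,y) \in a^{-1}(x)$, the equation $gy = x$ forces $y = g^{-1}x$, so the second coordinate is completely determined by the first. Moreover, the condition $y \in Y$ translates to $g^{-1}x \in Y$, which by definition means $g^{-1} \in Y_x$. Conversely, for every $h \in Y_x$ the pair $(h^{-1}, hx)$ lies in $G \times Y$ and is sent by $a$ to $x$. This shows that the map
\[
\Phi: Y_x \longrightarrow a^{-1}(x), \qquad h \longmapsto (h^{-1}, hx)
\]
is a bijection (with inverse $(g,y) \mapsto g^{-1}$).

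It therefore suffices to observe that $Y_x$ is countable. This is already recorded in the paragraph preceding the lemma: $Y_x$ is locally finite by the definition of a transverse system, and because $G$ is $\sigma$-compact we may write $G = \bigcup_n L_n$ with each $L_n$ compact, so $Y_x = \bigcup_n (Y_x \cap L_n)$ is a countable union of finite sets. Hence $a^{-1}(x)$ is countable, which completes the proof.

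There is essentially no obstacle here; the lemma is a straightforward unpacking of the definition, and the only point worth emphasizing (which the proof above makes explicit) is that on a transverse system we do not need the stronger discreteness of $Y_x$, only its countability, which is automatic from local finiteness and $\sigma$-compactness of $G$.
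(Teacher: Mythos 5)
Your proof is correct and follows essentially the same route as the paper: both reduce the statement to countability of the hitting time set $Y_x$ (the paper embeds the fiber into $Y_x^{-1}\times Y_x x$, while you exhibit an explicit bijection with $Y_x$, a cosmetic difference). Nothing further is needed.
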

\begin{proof} If $x \in X$ and $(g,y) \in  a^{-1}(x)$, i.e.\ $g y = x$, then
\[
g \in Y_x^{-1} \text{ and }y = g^{-1}  x \implies (g,y) \in  Y_x^{-1} \times Y_x  x.
\]
The lemma then follows since $Y_x$ is countable.
\end{proof}

\begin{con}[Cocycles over transverse systems]\label{ConCocycle} 
By Lemma \ref{CountFib1} the action map $a: G \times Y \to X$ admits a Borel section
\begin{equation}\label{bfix}
b:X \to G\times Y \quad \text{such that} \quad b(y) = (e,y) \; \text{ for all }y \in Y.
\end{equation}
Indeed, the existence of a section follows by the Lusin-Novikov selection theorem (see e.g.\ \cite[Lemma 2.1]{BHK} for the version needed here) and the normalization can be arranged since $Y$ is Borel.
We now fix such a Borel section once and for all and define a Borel function
\[
\beta: G \times Y \to G, \quad \beta(x) \coloneqq  (\mathrm{pr}_G (b(x)))^{-1}.
\]
By definition we then have $b(x) = (\beta(x)^{-1}, \beta(x)x)$, and $\beta$ satisfies
\begin{equation}\label{BetaNormalization}
\beta(x) \in Y_x \qand \beta(y) = e \quad \text{for all }x \in  G \times Y \text{ and }y \in Y.
\end{equation}
We deduce that if $x \in X$ and $g \in G$, then $\beta(gx)g\beta(x)^{-1}$ is a return time of $Y$ which 
sends $\beta(x)  x \in Y$ to $\beta(g x)  g x \in Y$. We thus obtain a Borel map
 \[ 
    \sigma:G\times X\to \Lambda(Y)\,,\;\;\; \sigma(g,x)\coloneqq \beta(g x)g\beta(x)^{-1},
\]
which is a \emph{normalized cocycle} in the sense that
\begin{equation}\label{CocycleId}
\sigma(g_1g_2, x) = \sigma(g_1, g_2  x)\sigma(g_2, x) \quad \text{and} \quad \sigma(e,x) = e \quad (g_1, g_2 \in G, x \in X).
\end{equation}
\begin{center}
  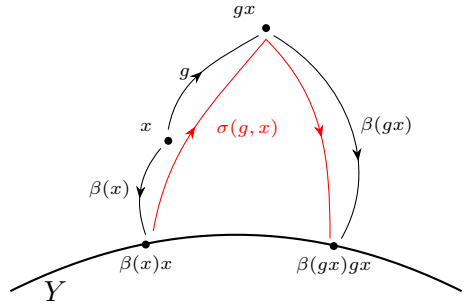
\begin{figure}[!ht]
  \begin{tikzpicture}
    [roundnode/.style={circle, black, minimum size=2pt}]
\draw[thick] (-3,-2) .. controls (-1,-1) and (1,-1) .. node[pos=0.1, anchor=north] {$Y$} (3,-2);
\fill (-1.2,-1.38) circle (1.5pt) node[below] {\tiny  $\beta(x)x$};
\fill (-0.9,0) circle (1.5pt) node[above left] {\tiny $x\;$};
\fill (0.4,1.5) circle (1.5pt) node[above left] {\tiny $gx$};
\fill (1.3,-1.4) circle (1.5pt) node[below] {\tiny $\beta(gx)gx$};
\draw[{Butt Cap[] Stealth[sep=0.5cm]}-] (-1.2,-1.25) to [out=110,in=-130] node[pos=0.5, anchor=east] {\tiny $\beta(x)$} (-1,-0.1) ;
\draw[{Butt Cap[] Stealth[reversed,sep=0.8cm]}-] (-0.9, 0.15) to [out=80,in=-150] node[pos=0.5, anchor=east] {\tiny $g$} (0.3,1.4) ;
\draw[{Butt Cap[] Stealth[reversed,sep=2cm]}-] (0.5,1.4) to [out=-30,in=60] node[pos=0.5, anchor=west] {\tiny $\beta(gx)$} (1.40,-1.3) ;
\draw[red,{Butt Cap[] Stealth[reversed,red,sep=1.5cm]}-] (-1.1,-1.2) to [out=80,in=-130] node[pos=0.5, anchor=west] {\tiny $\;\;\sigma(g,x)$} (0.4,1.35) ;
\draw[red,{Butt Cap[] Stealth[reversed,red,sep=1.5cm]}-] (0.4,1.35) to [out=-45,in=90](1.25,-1.3) ;
\end{tikzpicture}\caption{The cocycle $\sigma$.}
  \end{figure}
\end{center}
\end{con}
\begin{prop}[{Refined Campbell theorem, \cite[Proposition\ 4.2]{ABC}}]\label{Campbell} There exists a unique $\sigma$-finite measure $\nu$ on $Y$ such that for every Borel function $w: G \to [0, \infty]$ with $m_G(w) = 1$ and every Borel function $f: Y \to [0, \infty]$ we have
\begin{equation}
\int f \, d \nu = \int_X \sum_{g \in Y_x} f(g x) w(g) \, \, d \mu(x).
\end{equation}
Moreover, for every Borel function $F: G \times Y \to [0, \infty]$ we have the Campbell formula
\begin{equation}\label{equation:formula:integral}
\int_{G \times Y} F \, d(m_G \otimes \nu) = \int_X \sum_{g \in Y_x} F(g^{-1}, g x)\, d\mu(x).
\end{equation}
\end{prop}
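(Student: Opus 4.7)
The plan is to construct $\nu$ directly by the proposed formula for a fixed weight $w$, and then to prove its independence of $w$ by identifying the associated functional on $G$ with a right-invariant measure. The Campbell formula will then fall out by substituting $h(g) = H(g^{-1})$ in the resulting identity and exploiting unimodularity.

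First, for each Borel $w : G \to [0, \infty]$ with $m_G(w) = 1$ I define
\[
\nu_w(f) \coloneqq \int_X \sum_{g \in Y_x} f(gx)\, w(g) \, d\mu(x)
\]
for non-negative Borel $f: Y \to [0, \infty]$. Monotone convergence together with Tonelli makes $\nu_w$ a Borel measure on $Y$, and $\sigma$-finiteness can be obtained by exhausting $Y$ through the subsets on which the integrand is dominated by $\|f\|_\infty \cdot |Y_x \cap K|$ for a precompact Borel choice of $K \supset \mathrm{supp}(w)$ and by splitting $f$ accordingly.

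The heart of the argument is to show that $\nu_w$ does not depend on the choice of $w$. For this I introduce, for fixed non-negative Borel $f$ on $Y$, the auxiliary functional
\[
I_f(h) \coloneqq \int_X \sum_{g \in Y_x} f(gx)\, h(g) \, d\mu(x)
\]
on non-negative Borel $h: G \to [0, \infty]$, and I claim that the measure on $G$ that it represents is right-invariant. Given $h_0 \in G$, the $G$-invariance of $\mu$ lets me substitute $x \mapsto h_0^{-1} x$ in the outer integral; this transforms the hitting time set into $Y_{h_0^{-1}x} = Y_x \cdot h_0$, and reindexing the sum by $g = g' h_0$ yields $I_f(h) = I_f(R_{h_0} h)$, where $R_{h_0}h(g) \coloneqq h(g h_0)$. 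Hence the measure represented by $I_f$ is right-invariant, so by uniqueness of Haar measure together with the unimodularity of $G$, which identifies right-Haar with $m_G$, there exists $c(f) \in [0, \infty]$ with $I_f(h) = c(f) \cdot m_G(h)$. Testing this identity against $w$ with $m_G(w) = 1$ gives $c(f) = \nu_w(f)$, proving independence of the choice of $w$ and producing the claimed measure $\nu$; its uniqueness is immediate from the defining formula.

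For the Campbell formula I apply the identity $I_f(h) = m_G(h)\,\nu(f)$ to $h(g) \coloneqq H(g^{-1})$ for an arbitrary non-negative Borel $H: G \to [0, \infty]$; the unimodularity of $G$ gives $m_G(h) = m_G(H)$, whence
\[
\int_X \sum_{g \in Y_x} H(g^{-1})\, f(gx)\, d\mu(x) = m_G(H) \, \nu(f) = \int_{G \times Y} H(g)\, f(y)\, d(m_G \otimes \nu)(g,y).
\]
This is \eqref{equation:formula:integral} for separated $F(g,y) = H(g) f(y)$, and the general case follows by linearity and a standard monotone class argument combined with monotone convergence. The main technical subtlety I foresee is in justifying the right-invariance computation: the reindexing of the countable sum under the $G$-action and the $\sigma$-finiteness of the measure represented by $I_f$ on a sufficiently rich class of test $f$ must be verified carefully so that the Haar uniqueness theorem applies. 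Unimodularity is used twice in an essential way — once to pass from right-Haar to $m_G$, and once at the Campbell step to identify $m_G(H) = m_G(H \circ \mathrm{inv})$.
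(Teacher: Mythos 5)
The paper does not actually prove this proposition; it is quoted from \cite[Proposition 4.2]{ABC}, so there is no internal proof to compare against. On its own merits, your strategy -- define $\nu_w$ for a fixed normalized weight, prove the functional $I_f$ is right-invariant in $h$ via the substitution $x\mapsto h_0^{-1}x$ and the reindexing $Y_{h_0^{-1}x}=Y_xh_0$, invoke uniqueness of Haar measure and unimodularity to get $I_f(h)=c(f)\,m_G(h)$, and then derive the Campbell formula from product functions plus a monotone class argument -- is the standard Mecke-style route, and the invariance computation and the final step (including both uses of unimodularity) are correct.

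The genuine gap is the $\sigma$-finiteness you defer, and it cannot be repaired the way you suggest. The proposition is stated for a general transverse system: no local integrability of the point process is assumed (integrability is introduced only later to make $\nu$ finite). Your proposed domination ``$\|f\|_\infty\cdot|Y_x\cap L|$ for precompact $L\supset\mathrm{supp}(w)$'' only yields finiteness after integrating over $X$ if $\int_X|Y_x\cap L|\,d\mu<\infty$, i.e.\ precisely under the integrability hypothesis you do not have; so neither the $\sigma$-finiteness of $\nu_w$ (which is part of the assertion) nor that of the right-invariant measure $\lambda_f$ represented by $I_f$ is established. The latter is not a cosmetic issue: a right-invariant Borel measure that is not $\sigma$-finite need not be a multiple of Haar measure (counting measure, or the $0$--$\infty$ measure, are invariant), so the step ``$I_f(h)=c(f)\,m_G(h)$'' can fail as written; uniqueness of Haar within the class of $\sigma$-finite invariant Borel measures is what you must use, and it has to be earned. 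The missing idea is a decomposition adapted to the local finiteness of the hitting sets: for instance, write $Y=\bigcup_k Y^{(k)}$ with $Y^{(k)}=\{y\in Y\mid Y_y\cap B(e,1/k)=\{e\}\}$ (Borel, increasing, exhausting $Y$ because each $Y_y$ is locally finite, hence discrete). For bounded $f$ supported in $Y^{(k)}$ the points $g\in Y_x$ with $gx\in Y^{(k)}$ are $1/k$-separated, so $\lambda_f(L)\le\|f\|_\infty N(L,1/k)<\infty$ for every compact $L$; thus $\lambda_f$ is locally finite, the Haar uniqueness argument applies, and one obtains independence of $w$ and the Campbell identity for such $f$, after which monotone convergence in $k$ gives the general case and simultaneously the $\sigma$-finiteness of $\nu$. (Equivalently one can use the injective covers of $G\times Y$ from \cite{BHK, ABC}.) A smaller point you should also record: the measurability of $x\mapsto\sum_{g\in Y_x}f(gx)w(g)$ requires a Lusin--Novikov selection argument, since the fibers $Y_x$ are only countable, not parametrized.
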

\begin{defn} The measure $\nu$ from Proposition \ref{Campbell} is called the \emph{transverse measure} of $(X, \mu, Y)$.
\end{defn}
In the probabilistic literature the transverse measure of $(X, \mu, Y)$ is also called the \emph{Palm measure} of the point process $x \mapsto Y_x$. The term ``transverse measure'' is justified by the following proposition; here $a: G \times Y \to X$ denotes the action map.
\begin{prop}[{\cite[Proposition\ 4.9]{ABC}}]\label{TransverseMeasure} If $C \subset G \times Y$ is a Borel set such that $a|_C$ is injective, then $a_*((m_G \otimes \nu)|_C) = \mu|_{a(C)}$.
\end{prop}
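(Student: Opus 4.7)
The plan is to reduce the statement directly to the Campbell formula \eqref{equation:formula:integral} provided by Proposition \ref{Campbell}. Since the pushforward $a_*((m_G \otimes \nu)|_C)$ is determined by its values on Borel subsets of $X$, it suffices to show that for every Borel $B \subset X$ one has
\[
(m_G \otimes \nu)(C \cap a^{-1}(B)) = \mu(B \cap a(C)).
\]
Note that $a(C)$ is itself Borel by the Lusin--Souslin theorem, because $a|_C$ is an injective Borel map between standard Borel spaces.

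First I would rewrite the left-hand side as $\int_{G \times Y} \mathbbm{1}_C \cdot (\mathbbm{1}_B \circ a)\, d(m_G \otimes \nu)$ and apply the Campbell formula with $F \coloneqq \mathbbm{1}_C \cdot (\mathbbm{1}_B \circ a)$. Using that $a(g^{-1}, g x) = x$, the factor $\mathbbm{1}_B(a(g^{-1}, gx)) = \mathbbm{1}_B(x)$ can be pulled out of the inner sum, yielding
\[
(m_G \otimes \nu)(C \cap a^{-1}(B)) \;=\; \int_B \sum_{g \in Y_x} \mathbbm{1}_C(g^{-1}, g x)\, d\mu(x).
\]

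The central observation is that the map $Y_x \to a^{-1}(x)$, $g \mapsto (g^{-1}, gx)$ is a bijection: its image clearly lies in $a^{-1}(x)$, and conversely any $(h,y) \in a^{-1}(x)$ satisfies $y = h^{-1}x \in Y$ and thus $h^{-1} \in Y_x$. Hence the inner sum counts the cardinality of $C \cap a^{-1}(x)$. By the hypothesis that $a|_C$ is injective, this cardinality is at most one, and equals one precisely when $x \in a(C)$. Therefore $\sum_{g \in Y_x} \mathbbm{1}_C(g^{-1}, g x) = \mathbbm{1}_{a(C)}(x)$, and substituting back gives $\mu(B \cap a(C))$, which completes the proof.

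There is no serious obstacle here: all the non-trivial work is hidden in the Campbell formula already invoked in Proposition \ref{Campbell}. The only point that deserves a line of care is the measurability of $a(C)$, which one settles by citing Lusin--Souslin rather than by a hands-on construction.
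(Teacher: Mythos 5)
Your argument is correct: reducing the claim to the identity $(m_G\otimes\nu)(C\cap a^{-1}(B))=\mu(B\cap a(C))$ for Borel $B\subset X$, applying the Campbell formula \eqref{equation:formula:integral} to $F=1_C\cdot(1_B\circ a)$, and using the bijection $Y_x\to a^{-1}(x)$, $g\mapsto(g^{-1},gx)$ together with injectivity of $a|_C$ to evaluate the inner sum as $1_{a(C)}(x)$ is a complete proof; the Lusin--Souslin remark settles Borelness of $a(C)$. Note, however, that the paper itself gives no proof of this statement --- it is quoted directly from \cite[Proposition 4.9]{ABC} --- so there is no internal argument to compare with; your derivation from Proposition \ref{Campbell} is the natural one and matches the way the paper manipulates the Campbell formula elsewhere, for instance in the proof of the partition-of-unity identity \eqref{POU}.
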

\begin{no}\label{CFPOU}  We say that a a countable family $(C_n)_{n \in \bN}$ of pairwise disjoint Borel sets in $G \times Y$ is an \emph{injective cover} if $X$ if the action map restricts to an injection on each $C_n$ and if
$(a(C_n))_{n \in \bN}$ is a Borel partition of $X$. It was established in \cite[Lemma 4.6]{ABC} (extending \cite[\S 3.1]{BHK}) that injective covers always exist in our present setting. We fix such a cover $(C_n)_{
n \in \bN}$ once and for all. 

\item As a first application of injective covers, let us show that the transverse measure $\nu$ determines $\mu$: For every $n \in \bN$ we then define a $\sigma$-finite Borel measure $\mu_n$ on $C_n$ by  $\mu_n \coloneqq (a|_{C_n})^{-1}_\ast (\mu|_{a(C_n)})$. Then, by definition of an injective cover, $\mu$ is uniquely determined by the sequence $(\mu_n)$, and by Proposition \ref{TransverseMeasure} we have
\[
\mu_n = (m_G \otimes \nu)|_{C_n},
\]
hence $\nu$ determines $\mu$.

\item As a second application, we observe that for every $x \in X$ there exists a unique $n \in \bN$ and a unique $g \in Y_x$ such that $(g^{-1}, g x) \in C_n$, i.e.
\[
 \sum_{g \in Y_x}  \sum_{n=1}^\infty 1_{C_n}(g^{-1}, g x) = 1
\]
Thus if we define
\[
\rho: G \times X \to [0,1], \quad \rho(g,x) \coloneqq \sum_{n=1}^\infty 1_{C_n}(g,x)
\]
then for all $x \in X$ we have
\begin{equation}\label{DefPOU}
\sum_{g \in Y_x} \rho(g^{-1}, g x) = 1.
\end{equation}
A function with this property is called a \emph{Borel partition of unity} for $(X,Y)$.
\end{no}
\begin{prop} If $\rho: G \times X \to [0,1]$ is a Borel partition of unity then for every Borel function $\phi: X \to [0, \infty]$ we have
 \begin{equation}\label{POU}
\int_X \phi(x) \, d \mu(x) = \int_G \int_Y \phi(g y) \rho(g,y) \, d\nu(y)\, dm_G(g) \quad (\phi \in \Linf(X, \mu)).
\end{equation}
\end{prop}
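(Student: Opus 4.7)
The plan is to prove the identity by a direct application of the refined Campbell formula from Proposition \ref{Campbell}, using the function $F(g,y) = \phi(gy)\,\rho(g,y)$ and then exploiting the defining property \eqref{DefPOU} of the partition of unity.

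First I would verify that $F: G\times Y \to [0,\infty]$ is Borel: the action map $(g,y)\mapsto gy$ is Borel by assumption, so $\phi\circ a$ is Borel on $G\times Y$, and $\rho$ is Borel by hypothesis, hence so is $F$. Tonelli then justifies rewriting the iterated integral on the right-hand side of \eqref{POU} as a single integral:
\[
\int_G \int_Y \phi(gy)\,\rho(g,y)\, d\nu(y)\,dm_G(g) \;=\; \int_{G\times Y} F\, d(m_G\otimes \nu).
\]

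Next I would apply the Campbell formula \eqref{equation:formula:integral} to this $F$. Evaluating $F$ at $(g^{-1}, gx)$ with $g \in Y_x$ gives
\[
F(g^{-1}, gx) \;=\; \phi(g^{-1}\cdot (gx))\,\rho(g^{-1}, gx) \;=\; \phi(x)\,\rho(g^{-1}, gx),
\]
where we used that $g \in Y_x$ implies $gx\in Y$ so $F$ is defined there, and that the $G$-action is by bijections so $g^{-1}\cdot(gx)=x$. Substituting into Campbell yields
\[
\int_{G\times Y} F\, d(m_G\otimes \nu) \;=\; \int_X \phi(x) \sum_{g\in Y_x} \rho(g^{-1}, gx)\, d\mu(x).
\]

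Finally I would invoke the defining property \eqref{DefPOU} of the Borel partition of unity: for every $x\in X$ one has $\sum_{g\in Y_x}\rho(g^{-1},gx)=1$. Plugging this in collapses the inner sum and gives $\int_X \phi\, d\mu$, which completes the proof. I do not expect any serious obstacle here: the only point worth flagging is that the factorization $\phi(x)\sum \rho(g^{-1},gx)$ inside the integral is justified because $\phi(x)$ does not depend on $g$, and all terms are non-negative so Tonelli applies throughout without integrability concerns.
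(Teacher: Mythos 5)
Your proof is correct and follows essentially the same route as the paper: apply the Campbell formula to $F(g,y)=\phi(gy)\rho(g,y)$, simplify $F(g^{-1},gx)$ to $\phi(x)\rho(g^{-1},gx)$, and collapse the inner sum using the defining property \eqref{DefPOU} of the partition of unity. The extra remarks on measurability and Tonelli are fine but not needed beyond what the paper already implicitly uses.
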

\begin{proof} Apply the Campbell formula \eqref{equation:formula:integral} to the function $F: G \times Y \to [0, \infty]$ by $F(g,y) \coloneqq \phi(g y)\rho(g,y)$ to see that the right-hand side of \eqref{POU} equals
\begin{align*}
\int_X \sum_{g \in Y_x} F(g^{-1}, g x)\, d\mu(x) &= \int_X \sum_{g \in Y_x} \phi(g^{-1}g x)\rho(g^{-1}, g x)\, d \mu(x)\\
& = \int_X \phi(x) \left(\sum_{g \in Y_x} \rho(g^{-1}, g x) \right)\, d \mu(x) = \mu(\phi).\qedhere
\end{align*}
\end{proof}
\begin{prop}[{\cite[Proposition\ 5.1]{ABC}}] The transverse measure $\nu$ of the $G$-system $(X, \mu, Y)$ is finite if and only if $(X,\mu, Y)$ is integrable.
\end{prop}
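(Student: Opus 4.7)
The plan is to extract the equivalence directly from the defining property of the transverse measure in Proposition \ref{Campbell}, applied to a carefully chosen pair of test functions. The content of the result is essentially that finiteness of $\nu$ on $Y$ is recorded by the expected number of hits of $Y_x$ in any bounded window.

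Concretely, I would first choose an auxiliary compact set $L \subset G$ with $0 < m_G(L) < \infty$; such a set exists for every nontrivial lcsc group since $m_G$ is a nonzero Radon measure. Setting $w \coloneqq 1_L/m_G(L)$, we have $m_G(w) = 1$, and applying Proposition \ref{Campbell} with the constant function $f \equiv 1$ on $Y$ yields
\[
\nu(Y) \;=\; \int_X \sum_{g \in Y_x} \frac{1_L(g)}{m_G(L)} \, d\mu(x) \;=\; \frac{1}{m_G(L)} \int_X |Y_x \cap L| \, d\mu(x).
\]
This identity is the key step, and from it both implications drop out almost immediately.

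For the backward direction, assume $(X,\mu,Y)$ is integrable. Pick any single compact $L$ with $m_G(L) > 0$ as above; then $\int_X |Y_x \cap L| d\mu(x) < \infty$ by hypothesis, and the displayed identity gives $\nu(Y) < \infty$. For the forward direction, assume $\nu(Y) < \infty$ and let $L' \subset G$ be an arbitrary compact set. Choose a relatively compact open neighborhood $U \supset L'$, so that $m_G(U) < \infty$; the same argument applied with $w \coloneqq 1_U/m_G(U)$ (valid since $m_G(U) > 0$ once $U$ is nonempty) yields
\[
\int_X |Y_x \cap L'| \, d\mu(x) \;\leq\; \int_X |Y_x \cap U| \, d\mu(x) \;=\; m_G(U) \cdot \nu(Y) \;<\; \infty,
\]
establishing integrability.

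There is essentially no obstacle here: once Proposition \ref{Campbell} is in hand the statement reduces to choosing the right test functions, and the only minor subtlety is the need to dominate compact sets of possibly zero Haar measure by relatively compact open neighborhoods, which is automatic in a lcsc group.
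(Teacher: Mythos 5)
Your argument is correct. Note that the paper does not actually prove this statement itself -- it simply cites \cite[Proposition 5.1]{ABC} -- so there is no internal proof to compare against; but your derivation is the natural one and it works. Taking $f \equiv 1$ and $w = 1_L/m_G(L)$ in the defining formula of Proposition \ref{Campbell} indeed gives $\nu(Y) = m_G(L)^{-1}\int_X |Y_x \cap L|\, d\mu(x)$, which immediately yields the backward implication, and your handling of the forward implication is the right one: since a compact set $L'$ may have zero Haar measure, you cannot test with $1_{L'}$ directly, and dominating $L'$ by a relatively compact open neighborhood $U$ (so that $0 < m_G(U) < \infty$) and bounding $|Y_x \cap L'| \leq |Y_x \cap U|$ closes the gap. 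One could equivalently run the same computation through the Campbell formula \eqref{equation:formula:integral} with $F = 1_{L^{-1}} \otimes 1_Y$, but this is only a cosmetic variant of what you did.
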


\subsection{From transverse systems to discrete measured groupoids}\label{sec:3.2}
Throughout this subsection, $(X, \mu, Y)$ denotes a transverse $G$-system with transverse measure $\nu$.
\begin{con}[Transverse measured groupoid]
By Example \ref{ResBorel}, the left-action groupoid $G \ltimes X$ and its restriction $\cG \coloneqq (G \ltimes X)|_Y$ are Borel groupoids. Explicitly, the object space and morphism space of $\cG$ are respectively given by
\begin{equation}\label{TrGrpExpl}
\cG^{(0)} = Y \qand \cG^{(1)} = \{(g,y) \in G \times Y \mid g \in Y_y\},
\end{equation}
and the structure maps are given by $s(g,x) = x$, $t(g,x) = g x$ and
\[
(g,x)(h,y) =  (gh, y) \quad \text{if } x  = h  y.
\]
The latter implies in particular that $(g,x)^{-1} = (g^{-1}, g x)$. It is immediate from \eqref{TrGrpExpl} that the $t$-fibers are given by
\[
\cG^y =  t^{-1}(y) = \{(g^{-1}, g y) \mid g \in Y_y\} \quad (y \in \cG^{(0)}),
\]
and hence the map $Y_y \to \cG^y$, $g \mapsto (g^{-1}, g y)$ is a bijection for every $y \in \cG^{(0)}$. This shows that the groupoid $\cG$ has countable $t$--fibers, hence admits a canonical Haar system $\lambda$ given by counting measures along the fibers. 

The key observation for our purposes is that the transverse measure $\nu$ is $(\cG, \lambda)$-invariant or, equivalently, invariant under $R_{\cG}$ \cite[Thm.\ 4.3]{ABC}, and hence the pair $(\cG, \nu)$ is a discrete measured groupoid. We refer to $(\cG, \nu)$ as the \emph{transverse measured groupoid} of $(X, \mu, Y)$.
\end{con}
\begin{remark} The transverse measured groupoid can be defined for any transverse system, but in this generality the transverse measure $\nu$ will only be $\sigma$-finite. All of our main results will rely crucially on finiteness of $\nu$ or, equivalently, integrability of $(X, \mu, Y)$.
\end{remark}
\begin{example}[The periodic case]\label{LatticeCase} 
  Let $\Gamma < G$ be a lattice. We denote by $\mu$ the unique $G$-invariant probability measure on $X \coloneqq \Gamma \backslash G$ and set $Y = \{e\Gamma\}$. Then $(X, \mu, Y)$ is an integrable transverse system over $G$, the associated transverse groupoid is isomorphic to the groupoid $\underline{\Gamma} = (\Gamma \rightrightarrows \{e\Gamma\})$, and the transverse measure is the Dirac measure of total mass $\text{covol}(\Gamma)^{-1}$.
\end{example}
\subsection{Induced actions and induced Banach modules}\label{sec:3.3}
Throughout this subsection,  $(X, Y, \mu)$ denotes an integrable transverse system over $G$ with transverse measured groupoid $(\cG, \nu)$.

\begin{no} There is a natural morphism of Borel groupoids $i_{\cG}: \cG \to \underline{G}$, where $\underline{G} = G \rightrightarrows \{*\}$ is the Borel groupoid defined by $G$. Indeed, this morphism maps every object of $\cG$ to $*$ and maps $(g,y) \in \cG^{(1)}$ to $g$. Note that since $g \in Y_y$, the image of $\cG^{(1)}$ is contained in $\Lambda(Y) \subset G$. We will show that we can use this morphism to 
\begin{itemize}
\item induce a Borel $\cG$-spaces from every Borel $G$-spaces;
\item pullback Banach $G$-modules to Banach $\cG$-modules.
\end{itemize}
Moreover, the two constructions are compatible. We now turn to the details.
\end{no}
\begin{con}[Induced action] \label{ConStarAction}
Let $A$ be a Borel $G$-space; we use the morphism $i_{\cG}$ to induce an action of $\cG$ on $A \times Y$ as follows: We define the anchor map $A \times Y \to Y$ to be the projection to the second factor and define the action map by
\begin{equation}\label{GeneralStarAction}
\cG^{(1)} \times_Y (A \times Y) \to A \times Y, \quad (g,y) \star (a,y) \coloneqq  (g\cdot a, g y).
\end{equation}
Since $i_{\cG}$ is a morphism, this does indeed define an action, and thus $(A \times Y, \star)$ becomes a Borel $\cG$-space, called the $\cG$-space \emph{induced} from $A$. Similarly, if $(A, \tau)$ is a Lebesgue $G$-space, then $(A \times Y, \tau \otimes \nu, \star)$ is a Lebesgue $(\cG, \nu, \lambda)$-space. 
\end{con}
\begin{con}[Pullback module]\label{PullbackModules} If $E$ is a Banach-$G$-module, then $E$ becomes a measurable Banach $\cG$-module in the sense of Example \ref{BanachModule} via \[(g,y) \cdot v \coloneqq g \cdot v;\] again, this follows from the fact that $i_{\cG}$ is a morphism. In particular, we can then consider the associated constant $\cG$-bundle $\underline{E} = (E)_{y \in \cG^{(0)}}$.
\end{con}
\begin{no}\label{InducedBundle} The previous two constructions are related as follows. Let $(A, \tau)$ be a Lebesgue $G$-space and denote by $E \coloneqq \Linf(A,\tau)$ the associated Banach-$G$-module. By Construction \ref{ConStarAction} we obtain a Lebesgue $\cG$-space $(A \times Y, \tau \otimes \nu, \star)$. This gives rise to a measurable $\cG$-bundle $\cL_{A \times Y}$ as in Construction \ref{LBundle}. With notation as in this construction we have
$(A \times Y)^y = A \times \{y\}$ for every $y \in \cG^{(0)}$ and the corresponding fiber measure is just $\tau \otimes \delta_y$. 

\item Given a Borel function $f: A \times Y\to \bR$ and $y \in Y$, we denote $f_y: A \to \bR$, $f_y(a) \coloneqq f(a,y)$. We then obtain a bundle isomorphism
\[
\phi:  \cL_{A \times Y} \to \underline{E}, \quad [f|_{A \times \{y\}}] \mapsto [f_y].
\]
In particular, the $\cG$-bundle associated with an induced space is (isomorphic to) a constant bundle. As a consequence, we see that the classes in $\Linf(A \times Y)^{\cG}$ are those represented by functions $f: A \times Y \to \bR$ such that
for all $(g,y) \in \cG^{(1)}$ and almost all $a \in A$ we have
$f(a, g y) = f(g^{-1} \cdot a, y)$.
\end{no}
\begin{example}\label{StarActionG} We consider $A = G$ with the $G$-action given by $\gamma \cdot a \coloneqq \gamma g^{-1}$. The induced $\cG$-action on $G \times Y$ as given by
\[
(g,y) \star (g', y) = (\gamma g^{-1}, gy).
\]
then happens to commute with the $G$-action on the first factor by  \emph{left}-multiplication. We thus obtain a $(\cG \times G)$-action 
\begin{equation}\label{MEAction}
\cG \times G \curvearrowright G \times Y, \quad ((g,y), k) \star (\gamma , y) = (k\gamma g^{-1}, gy).\tag{$\star$}
\end{equation}
\end{example}

\begin{center}
  \begin{figure}[!ht]
  \begin{tikzpicture}[scale=0.98]
\draw[thick] (-7,-2) .. controls (-5,-1) and (-3,-1) .. node[pos=0.9, anchor=north] {$Y$} (-1,-2);
\fill (-4.9,0) circle (1.5pt) node[above right] {\tiny $\gamma  y\;$};
\fill[red] (-4.0,-1.24) circle (1.5pt) node[below] {\color{red} \tiny $y\;$};

\draw[ red,{Butt Cap[] Stealth[sep=0.5cm]}-] (-4.8,0) to [out=-30,in=80] node[pos=0.5, anchor=west] {\color{red}\tiny $\gamma $} (-4.0,-1.14) ;

\draw[thick] (1,-2) .. controls (3,-1) and (5,-1) .. node[pos=0.9, anchor=north] {$Y$} (7,-2);
\fill (3.1,0) circle (1.5pt) node[above right] {\tiny $\gamma  y\;$};
\fill (4,-1.24) circle (1.5pt) node[below] {\tiny $y\;$};
\fill[red] (5.3,-1.4) circle (1.5pt)node[above] {\color{red} \tiny $gy\;$};
\fill (2.2,1) circle (1.5pt) node[above] {\tiny $k \gamma  y\;$};
\draw[{Butt Cap[] Stealth[sep=0.5cm]}-] (3.2,0) to [out=-30,in=80] node[pos=0.5, anchor=west] {\tiny $\gamma $} (4,-1.14) ;
\draw[ {Butt Cap[] Stealth[reversed, sep=0.5cm]}-] (4.1,-1.4) to [out=-70,in=-110] node[pos=0.5, below] {\tiny $g$} (5.25,-1.54) ;
\draw[{Butt Cap[] Stealth[reversed,sep=0.5cm]}-] (3.1,0.1) to [out=80,in=-30] node[pos=0.5, anchor=south west] {\tiny $k$} (2.3,1) ;

\draw[red,{Butt Cap[] Stealth[reversed,sep=0.5cm]}-] (3.1,0) to [out=120,in=-30] (2.2,0.9) ;
\draw[red,{Butt Cap[] Stealth[sep=0.5cm]}-] (3.1,0) to [out=-30,in=120] node[pos=0.5, anchor=east] {\tiny $k\gamma  g^{-1}$}(4,-1.24);
\draw[red,{Butt Cap[] Stealth[sep=0.5cm]}-] (4,-1.24) to [out=-30,in=-160]  (5.15,-1.52);

\draw[thick,->] (-1,1) to [out=30,in=150] node[pos=0.5, above] { $((g,y),k)\star$}   (1,1);
\end{tikzpicture}\caption{The action $\star$.}
  \end{figure}
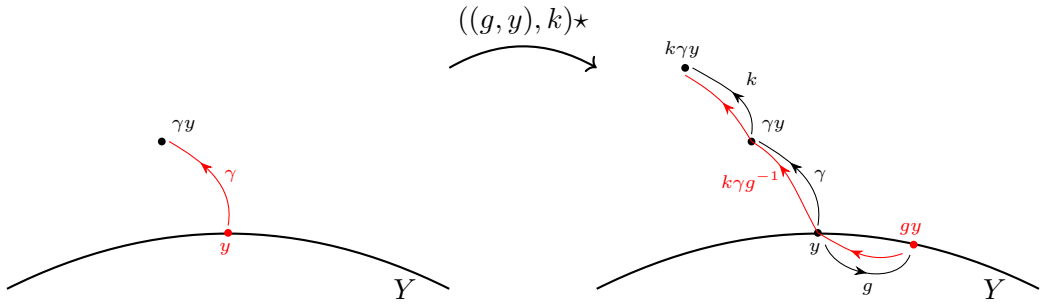
\end{center}

The following generalization of Example \ref{StarActionG} will be crucial for us in the sequel:
\begin{example}
If $A$ is a Borel $G$-space, then we can define a whole family of $(\cG \times G)$-spaces $A^n \times G$ by
 \begin{equation}\label{StarAction}
 \cG \times G \curvearrowright A^n \times G \times Y, \quad ((g,y), k) \star_A (\underline{a}, \gamma , y) = (g\cdot \underline{a}, k \gamma g^{-1}, gy),\tag{$\star_A$}
\end{equation}
with anchor map given by the projection to the final factor.
\end{example}

\section{Measurable bounded cohomology and measure equivalence}\label{sec:BCandME}

We now digress to prove a version of our main theorem for (ergodic) transverse groupoids over countable \emph{discrete} groups $G$. This version has a clean conceptual proof in terms of measure equivalence, which is not available in the non-discrete case due to current technical limitations in the theory of non-discrete Borel groupoids. Nevertheless, the proof in the discrete case should be a seen as a blueprint for the general case. The key observation is that measure equivalent discrete measured groupoids have closely related bounded cohomology, as first established by Monod and Shalom \cite{MonShal} in the group case and recently reproved by the second-named author and Savini \cite{sarti:savini:23} in the groupoidal language.

\subsection{Couplings and measure equivalences}\label{sec:4.1}

In this subsection we extend the notion of coupling and measure equivalence from discrete groups to discrete measured groupoids. Throughout, $(\cG, \nu_{\cG})$, $(\cH, \nu_{\cH})$ and $(\cK, \nu_{\cK})$ denote discrete measured groupoids.
\begin{no}\label{QuotientMeasure} Let $\Omega$ be a $\cG \times \cH$-Borel space and assume that the $\cH$-action admits a strict fundamental domain $Y \subset \Omega$. Since the $\cG$-action commutes with the $\cH$-action, it descends to an action $\cG \curvearrowright \cH \backslash \Omega$.

Moreover, for every $(g,y)\in\mathcal{G}^{(1)}\times_{\mathcal{G}^{(0)}} Y$, there exists a unique element $\alpha(g,y) \in \mathcal{H}$ such that $(g,\alpha(g,y)) \cdot y\in Y$, and this defines a groupoid homomorphism $\alpha: \mathcal{G}\ltimes Y \to \mathcal{H}$ called the \emph{coupling cocycle} of the pair $(\Omega, Y)$. In particular, $\cG$ acts on $Y$ via \begin{equation}\label{DiamondAction}g \diamond  y= (g, \alpha(g,y)) \cdot y.\end{equation} Moreover, the composition $Y \hookrightarrow \Omega \twoheadrightarrow \cH \backslash \Omega$ defines a Borel isomorphisms which intertwines the $\cG$ actions on $Y$ and $\cH \backslash \Omega$. 

It follows from \eqref{DiamondAction} that if $\Omega$ admits a $\cG \times \cH$-invariant measure $m$, then $m|_Y$ is invariant under $\cG$, and hence corresponds to a $\cG$-invariant measure $\bar m$ on $\cH \backslash \Omega$; we refer to the latter as the \emph{quotient measure} of $m$ on $\cH \backslash \Omega$.
\end{no}

\begin{defn}\label{DefCoupling}
A \emph{$(\mathcal{G},\mathcal{H})$-coupling} is a Lebesgue space $(\Omega,m)$ with a free measure preserving $\mathcal{G}\times\mathcal{H}$-action admitting finite measure strict fundamental domains $X,Y\subset \Omega$ for the $\mathcal{G}$-action and the $\mathcal{H}$-action respectively. It is called \emph{ergodic} if the action groupoid $(\mathcal{G} \times \mathcal{H})\ltimes \Omega$ (equivalently, the underlying equivalence relation $\mathcal{R}_{(\mathcal{G} \times \mathcal{H})\ltimes \Omega}$) is ergodic. We say that $\cG$ and $\cH$ are \emph{measure equivalent} if there exists a $(\cG, \cH)$-coupling.
\end{defn}
\begin{no} If $(\Omega, m)$ is a $(\cG, \cH)$-coupling, then we write 
$\mathcal{G}\overset{\textup{ME}}{\sim}_{(\Omega, m)} \mathcal{H}$
and say that $G$ and $H$ are ME \emph{via $(\Omega, m)$}. We also write $\mathcal{G}\ME \mathcal{H}$ to indicate that $\cG$ and $\cH$ are measure equivalent. Note that this implies that there exists an \emph{ergodic} $(\cG, \cH)$-coupling. If we consider discrete groups as discrete measured groupoids with one object, then Definition \ref{DefCoupling} restricts to the usual definitions of measure equivalence of discrete groups \cite{furman:survey}.
\end{no}
\begin{prop}\label{OME} Measure equivalence is an equivalence relation on the class of discrete measured groupoids.
\end{prop}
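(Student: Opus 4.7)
The plan is to verify reflexivity, symmetry, and transitivity in turn, following the classical template for discrete groups and making the necessary bookkeeping adjustments for groupoids.

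For \emph{reflexivity}, I would take $\Omega = \cG^{(1)}$ with anchor map $(t, s) : \cG^{(1)} \to \cG^{(0)} \times \cG^{(0)}$ and let the product groupoid $\cG \times \cG$ act by $(g_1, g_2) \cdot \omega \coloneqq g_1 \omega g_2^{-1}$, defined whenever $s(g_1) = t(\omega)$ and $s(g_2) = s(\omega)$. The invariant measure is $\nu_{\cG} \circ \lambda$, and the unit embedding $\cG^{(0)} \hookrightarrow \cG^{(1)}$ gives a common strict fundamental domain for the left and right factors whose induced measure is exactly $\nu_{\cG}$, hence finite. Freeness follows from the fact that $g_1\omega g_2^{-1} = \omega$ forces $g_1 = 1_{t(\omega)}$ and $g_2 = 1_{s(\omega)}$. \emph{Symmetry} is essentially formal: if $(\Omega, m)$ is a $(\cG, \cH)$-coupling, then the same Lebesgue space with the actions of $\cG$ and $\cH$ swapped is an $(\cH, \cG)$-coupling.

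For \emph{transitivity}, suppose $\cG \ME_{(\Omega_1, m_1)} \cH$ and $\cH \ME_{(\Omega_2, m_2)} \cK$. I would form the fiber product $\Omega_1 \times_{\cH^{(0)}} \Omega_2$ (using the right-$\cH$ anchor of $\Omega_1$ and the left-$\cH$ anchor of $\Omega_2$), endow it with the diagonal $\cH$-action (right on the first factor, left on the second), and pass to the quotient $\Omega \coloneqq \cH \backslash (\Omega_1 \times_{\cH^{(0)}} \Omega_2)$ which inherits commuting actions of $\cG$ (from $\Omega_1$) and $\cK$ (from $\Omega_2$). For the measure, let $Y_1 \subset \Omega_1$ be a strict fundamental domain for the right $\cH$-action and $X_2 \subset \Omega_2$ be a strict fundamental domain for the left $\cH$-action; then $Y_1 \times_{\cH^{(0)}} \Omega_2$ (or, equivalently, $\Omega_1 \times_{\cH^{(0)}} X_2$) is a strict fundamental domain for the diagonal $\cH$-action and carries the restricted measure, which I would push forward to $\Omega$ in the sense of \S\ref{QuotientMeasure}. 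Freeness of the induced $\cG \times \cK$-action is then immediate from freeness in the two input couplings, and strict fundamental domains for the $\cG$- and $\cK$-actions on $\Omega$ are obtained respectively as $(X_1 \times_{\cH^{(0)}} X_2)$ and $(Y_1 \times_{\cH^{(0)}} Y_2)$, where $X_1$ and $Y_2$ are chosen fundamental domains for $\cG$ on $\Omega_1$ and for $\cK$ on $\Omega_2$.

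The main obstacle will be proving that the fundamental domains above have \emph{finite} measure, which amounts to a Fubini-type computation in the groupoid setting. The finiteness is not automatic from the finiteness of the individual fundamental domains in $\Omega_1$ and $\Omega_2$; one needs the cocycle description of \S\ref{QuotientMeasure} to see that, after disintegration over $\cH^{(0)}$ against $\nu_\cH$, the mass of $X_1 \times_{\cH^{(0)}} X_2$ is the integral of the product of the fiberwise masses of $X_1$ and $X_2$, which is bounded because each factor produces an invariant finite measure on $\cH^{(0)}$ and $\nu_\cH$ is a common reference measure. A careful disintegration argument, together with the fact that the Haar system is the counting system, is therefore the technical heart of the proof.
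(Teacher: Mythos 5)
Your route is the same as the paper's (two-sided translation action on $\cG^{(1)}$ for reflexivity, swapping factors for symmetry, the $\cH$-quotient of $\Omega_1\times_{\cH^{(0)}}\Omega_2$ with product-type fundamental domains for transitivity), but two points in the reflexivity step are genuinely wrong as stated. First, freeness: $g_1\omega g_2^{-1}=\omega$ does \emph{not} force $g_1$ and $g_2$ to be units; it forces $g_1=\omega g_2\omega^{-1}$, so the stabiliser of $\omega$ under the joint $\cG\times\cG$-action is a copy of the isotropy group $\cG_{s(\omega)}^{s(\omega)}$. Already for $\cG$ a group --- in particular in the motivating lattice case $\cG=\underline{\Gamma}$ --- the joint action is never free. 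What the definition of a coupling actually requires, and what is true, is that the left-translation action and the right-translation action are each free; the verification must be done factor-wise, not for the product action. Second, the measure: a discrete measured groupoid only comes with a \emph{quasi-invariant} $\nu_\cG$, and then $\nu_\cG\circ\lambda$ differs from its image under inversion by the Radon--Nikodym cocycle, so the translation action does not preserve $\nu_\cG\circ\lambda$ in general (it does precisely when $\nu_\cG$ is invariant). This is exactly the subtlety the paper singles out as the main difference from the group case, and it passes to the symmetrization $(\nu_\cG\circ\lambda)+(\nu_\cG\circ\lambda)^{-1}$ instead of using $\nu_\cG\circ\lambda$ itself; your proposal simply asserts invariance.

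On transitivity your description is fine and in places more careful than the paper's: the identification $\Omega\cong\Omega_1\times_{\cH^{(0)}}X_2$ via the strict $\cH$-fundamental domain, and the fundamental domains $X_1\times_{\cH^{(0)}}X_2$ and $Y_1\times_{\cH^{(0)}}Y_2$, are the right objects. However, your finiteness argument is not yet an argument: after disintegrating over $\cH^{(0)}$ the relevant mass is $\int_{\cH^{(0)}} m_1^y(X_1)\,m_2^y(X_2)\,d\nu_\cH(y)$, and knowing that each of $\int m_1^y(X_1)\,d\nu_\cH$ and $\int m_2^y(X_2)\,d\nu_\cH$ is finite does not bound the integral of the product, so appealing to "a common reference measure" does not close the step. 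The paper instead obtains finiteness from the asserted product identities $m(\widetilde X)=m_1(X)\,m_2(Z_2)$ and $m(\widetilde Y)=m_1(Z_1)\,m_2(Y)$; if you prefer the disintegration route you would have to prove a statement of that kind (or otherwise control the fibrewise masses), which is precisely the point your sketch leaves open.
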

\begin{proof} Compared to the group case, the main subtlety arises from the fact that $\nu$ need not be invariant, and hence  $\nu \circ \lambda$ is in general not equal but only equivalent to its image $(\nu\circ \lambda)^{-1}$ under the inversion map. This can be resolved by replacing $\nu \circ \lambda$ by its symmetrization $(\nu\circ \lambda) +(\nu\circ \lambda)^{-1}$ and then the proof is as in the group case:

A $(\cG, \cG)$-coupling is given by $(\Omega, m) \coloneqq (\mathcal{G}^{(1)},(\nu\circ \lambda) +(\nu\circ \lambda)^{-1})$ with the $(\cG \times \cG)$-action given by
\[
(g_1,g_2)\cdot  h \coloneqq g_1hg_2^{-1} \quad \text{for all }h\in \mathcal{G}^{(1)}, g_1 \in \mathcal{G}_{t(h)}, g_2\in \mathcal{G}_{s(h)}
\]
This shows reflexivity, and symmetry is immediate since any $(\mathcal{G},\mathcal{H})$-coupling $(\Omega,m)$ can be seen as $(\mathcal{H},\mathcal{G})$-coupling by simply swapping the actions. 

As for transitivity, assume that we are given a $(\mathcal{G},\mathcal{H})$-coupling $(\Omega_1,m_1)$ with fundamental domains $X$ and $Z_1$ and a $(\mathcal{H},\mathcal{K})$-coupling $(\Omega_2,m_2)$ with fundamental domains $Z_2$ and $Y$. We now define $\Omega' \coloneqq \Omega_1 \times_{\cH^{(0)}} \Omega_2$; then $\cH$ acts diagonally on $\Omega'$, and there are commuting actions of $\cG$ and $\cK$ on the factors. We thus obtain a $(\cG \times \cK)$-action on $\Omega \coloneqq \cH \backslash \Omega'$ which preserves the quotient measure $m$ of $(m_1 \otimes m_2)|_{\Omega'}$. We now claim that $(\Omega, m)$ is the desired $(\mathcal{G},\mathcal{K})$-coupling. 
Indeed, a strict $\mathcal{G}$-fundamental domain is given by the set 
$$\widetilde{X}\coloneqq \{ [x,\omega] \mid x\in X\,,\, \omega\in \Omega_2\,,\, t^{\mathcal{H}}_{\Omega_1}(x)=t^{\mathcal{H}}_{\Omega_2}(\omega) \},$$
and since $\mathcal{H}$ acts trivially on the first factor of $\widetilde{X}$ we have
$${m}(\widetilde{X})=m_1(X)m_2(\mathcal{H}\backslash \Omega_2)=m_1(X)m_2(Z_2)< +\infty.$$ 
Similarly, a $\mathcal{H}$-fundamental domain is given by
$$\widetilde{Y}\coloneqq \{ [\omega,y] \mid \omega \in \Omega_1\,,\, y\in Y \,,\, t^{\mathcal{H}}_{\Omega_1}(\omega)=t^{\mathcal{H}}_{\Omega_2}(y)\}\,.$$ 
and we have  \[m(\widetilde{Y})=m_1(\mathcal{H}\backslash\Omega_1)m_2(Z_2)=m_1(Z_1)m_2(Y) < +\infty .\qedhere\]
\end{proof}
\begin{example}\label{example:ME}
Let $\mathcal{G}$ be a discrete measured groupoid with counting Haar system $\lambda$ and $(A,\tau)$ be a Borel space of finite measure on which $\mathcal{G}$ acts in a measure preserving way. 
We claim that $\cG \ME \cG\ltimes A$. More precisely, we have
\[
\cG \ME_{((\mathcal{G}\ltimes A)^{(1)}, \tau \circ \lambda)} \cG \ltimes A,
\]
where the action of $\cG \ltimes A$ is the tautological one, and the $\cG$-action has anchor map $t(g,a)\coloneqq t_A(a)$ and is given by
$$h\circ  (g,a)\coloneqq (gh^{-1},h\cdot a)\, \quad \text{for every }h \in \mathcal{G}_{t(g,a)}.$$ 
Indeed, a fundamental domain of finite measure for both actions is given by 
$$\mathcal{G}^{(0)}\times_{\mathcal{G}^{(0)}} A=\{ (t_A(a),a)\,,\, a\in A \}\,.$$
\end{example}

\subsection{Measurable bounded cohomology under couplings}\label{sec:4.2}
As before, $(\cG, \nu_{\cG})$ and $(\cH, \nu_{\cH})$ denote discrete measured groupoids.
\begin{no}\label{OEcocycle}
Let $(\Omega, m)$ be a $(\mathcal{G},\mathcal{H})$-coupling with fundamental domains $X, Y \subset \Omega$. We denote by $\alpha: \mathcal{G}\ltimes Y \to \mathcal{H}$ and $\beta: \cH \ltimes X \to \cG$ the corresponding coupling cocycles as  so that the induced actions  $\cG \curvearrowright \cH\backslash \Omega \cong Y$ and $\cH \curvearrowright \cG \backslash \Omega \cong X$ are given by
  \begin{equation}\label{eq:action:ME}
     g \diamond  y= (g, \alpha(g,y)) \cdot y \qand  h \bullet  x= (\beta(h,x), h)  x.
  \end{equation}
  As observed in \S \ref{QuotientMeasure} these actions preserve the corresponding restrictions of $m$, and hence we obtain two new discrete measured groupoids
\begin{equation}\label{NewGroupoids}
(\cG \ltimes Y,  m|_Y) \qand (\cH \ltimes X,  m|_X).
\end{equation}
\end{no}
\begin{lemma} If the coupling $(\Omega, m)$ is ergodic, then so are the groupoids in \eqref{NewGroupoids}.
\end{lemma}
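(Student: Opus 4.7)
The plan is to reduce ergodicity of $\cG \ltimes Y$ (and symmetrically $\cH \ltimes X$) to ergodicity of the coupling $(\Omega,m)$ by saturating $(\cG \ltimes Y)$-invariant subsets of $Y$ to $(\cG \times \cH)$-invariant subsets of $\Omega$. The guiding picture is that $Y$ is a strict fundamental domain for $\cH \curvearrowright \Omega$ (so $Y \cong \cH \backslash \Omega$ via the canonical map) and that the $\diamond$-action on $Y$ is, up to this identification, the quotient $\cG$-action on $\cH \backslash \Omega$.

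The first step is to fix a $(\cG \ltimes Y)$-invariant Borel subset $U \subset Y$ and form its $\cH$-saturation $\widetilde U \coloneqq \cH \cdot U \subset \Omega$. Plainly $\widetilde U$ is $\cH$-invariant. To see it is $\cG$-invariant, let $\omega \in \widetilde U$ with $\omega = h \cdot u$, $u \in U$, and let $g \in \cG$ with $(g,\omega)$ composable. Since the $\cG$- and $\cH$-actions commute we have $g \cdot \omega = h \cdot (g \cdot u)$, and from \eqref{DiamondAction} the identity $g \cdot u = \alpha(g,u)^{-1} \cdot (g \diamond u)$ together with the hypothesis $g \diamond u \in U$ shows that $g \cdot u \in \cH \cdot U = \widetilde U$; applying $h$ and using $\cH$-invariance of $\widetilde U$ concludes that $g \cdot \omega \in \widetilde U$.

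The second step is to invoke the ergodicity hypothesis on $(\Omega,m)$: the $(\cG \times \cH)$-invariant set $\widetilde U$ is $m$-null or $m$-conull. The third step connects this dichotomy back to $Y$ via the identity $U = \widetilde U \cap Y$, which holds because $Y$ is a \textbf{strict} fundamental domain for $\cH$ (so any two $\cH$-equivalent points of $Y$ coincide). Consequently $Y \setminus U = Y \setminus \widetilde U$, and since $U \subset \widetilde U$ and $Y \setminus U \subset \Omega \setminus \widetilde U$, the two alternatives give respectively $m|_Y(U) = 0$ or $m|_Y(Y \setminus U) = 0$. This is precisely ergodicity of $(\cG \ltimes Y, m|_Y)$. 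The argument for $(\cH \ltimes X, m|_X)$ is verbatim with the roles of $\cG$ and $\cH$, and of $Y$ and $X$, interchanged.

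The only mildly delicate point is the verification that the anchor map $t^{\cG}_\Omega$ is constant on $\cH$-orbits, so that composability in $\cG$ is preserved when translating by $\cH$; this is immediate from the fact that the $\cG$- and $\cH$-actions commute, but it is worth spelling out so the $\cG$-invariance of $\widetilde U$ is not ambiguous. Everything else is formal bookkeeping once the saturation $\widetilde U$ is in hand.
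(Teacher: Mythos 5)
Your proof is correct and follows essentially the same route as the paper: saturate the invariant set $U$ under $\cH$ to the $(\cG\times\cH)$-invariant set $\widetilde U=\cH U$, apply ergodicity of the coupling, and transfer the null/conull dichotomy back to $Y$ using strictness of the fundamental domain. Your explicit use of the cocycle identity $g\cdot u=\alpha(g,u)^{-1}\cdot(g\diamond u)$ just makes precise the commutation step that the paper writes as $\cG\cH U=\cH\cG U$.
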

\begin{proof} 
Let $U\subset Y = (\cG \ltimes Y)^{(0)}$ be a $\mathcal{G}$-invariant Borel subset and consider the $\mathcal{H}$-invariant Borel set $\widetilde{U}\coloneqq \mathcal{H}U$. Since
$\mathcal{G}U=U$, commutativity of the actions yields
$$\mathcal{G} \widetilde{U}= \mathcal{G}\mathcal{H}U=\mathcal{H} \mathcal{G}U=\mathcal{H}U=\widetilde{U}\,,$$
hence $\widetilde{U}$ is $(\cG \times \cH)$-invariant.
Thus $\widetilde{U}$ is $\mathcal{G}$-invariant. By ergodicity we thus have either $m(\widetilde{U})=0$ or $m(\Omega\setminus\widetilde{U})=0$. In the first case, we obtain 
$m|_{Y}(U)=m(U)\leq m(\widetilde{U})=0$. Similarly in the second case we get
\[m|_{Y}(Y\setminus U)=m(Y\setminus U)\leq m(\mathcal{H}(Y\setminus U))=m(\Omega\setminus\widetilde{U})=0,\] whence $\mathcal{G}\ltimes Y$ is ergodic. 
The proof for $\mathcal{H}\ltimes X$ is obtained by reversing the roles.\end{proof}
We now have the following key lemma:
\begin{lemma}\label{lemma:iso}
  Let $(\Omega,m)$ be an ergodic $(\mathcal{G},\mathcal{H})$-coupling with respective fundamental domains $X$ and $Y$. Then $\mathcal{G}\ltimes Y$ and $\mathcal{H}\ltimes X$ are weakly isomorphic ergodic, principal, discrete measured groupoids.
\end{lemma}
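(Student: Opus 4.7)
The plan is to reduce the statement to an application of Proposition \ref{prop:furman:weakly}. Ergodicity having been proved in the preceding lemma, what remains is to check that the two groupoids are principal, and to construct the Borel maps $p$ and $q$ satisfying the three conditions. First I would observe that principality of both action groupoids is immediate from freeness of the $(\mathcal{G}\times\mathcal{H})$-action on $\Omega$: indeed, if $g\diamond y=y$ then $(g,\alpha(g,y))\cdot y=y$, forcing both components to be identities, and symmetrically for $\mathcal{H}\ltimes X$. The discrete measured groupoid structure on both $\mathcal{G}\ltimes Y$ and $\mathcal{H}\ltimes X$ is built into the $\diamond$ and $\bullet$ actions via the restrictions of $m$ to $Y$ and $X$ respectively.

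Next I would define $p:Y\to X$ by sending $y\in Y$ to the unique point of $X$ lying in its $\mathcal{G}$-orbit in $\Omega$ (which exists and is unique because $X$ is a strict $\mathcal{G}$-fundamental domain); Borel regularity follows from the Lusin--Novikov selection theorem together with the fact that the $\mathcal{G}$-orbits in $\Omega$ are countable (since $\mathcal{G}$ is discrete). Symmetrically define $q:X\to Y$ using the strict $\mathcal{H}$-fundamental domain structure of $Y$. For condition (i) of Proposition \ref{prop:furman:weakly}, note that $m$ is $\mathcal{G}$-invariant and $\mathcal{G}$ acts with countable orbits, so $\mathcal{G}$-saturations of $m$-null subsets of $X$ remain $m$-null; since $p^{-1}(A)\subset (\mathcal{G}\cdot A)\cap Y$ for every Borel $A\subset X$, this yields $p_\ast(m|_Y)\prec m|_X$, and symmetrically for $q$.

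For condition (ii), take $g\in(\mathcal{G}\ltimes Y)^{(1)}$, so that $t(g)=(g,\alpha(g,s(g)))\cdot s(g)$ as points of $\Omega$. Writing $h\coloneqq\alpha(g,s(g))$, set $s(g)=g_1\cdot p(s(g))$ in $\Omega$ with $g_1\in\mathcal{G}$; using that the $\mathcal{G}$ and $\mathcal{H}$ actions commute, one computes that $p(s(g))$ and $p(t(g))$ both lie in the $\mathcal{H}$-orbit of each other after suitable translation by $\mathcal{G}$, and uniqueness of the $\mathcal{G}$-fundamental domain $X$ forces the identity $p(t(g))=h\bullet p(s(g))$. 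This gives $(p(t(g)),p(s(g)))\in R_{\mathcal{H}\ltimes X}$; the symmetric assertion for $q$ is identical. For condition (iii), given $y\in Y$, the points $y$ and $q(p(y))$ both lie in $Y$ and in the same $(\mathcal{G}\times\mathcal{H})$-orbit of $\Omega$; the same fundamental-domain uniqueness argument as above shows that elements of $Y$ in the same $(\mathcal{G}\times\mathcal{H})$-orbit are $\diamond$-related, hence $(q(p(y)),y)\in R_{\mathcal{G}\ltimes Y}$, and symmetrically for $(p(q(x)),x)$.

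The main obstacle is the bookkeeping in Step (ii): one has to carefully track how the commuting $\mathcal{G}$- and $\mathcal{H}$-actions on $\Omega$ translate into the $\diamond$- and $\bullet$-actions on the fundamental domains, which involves the coupling cocycles $\alpha$ and $\beta$, and to justify that the selection maps $p,q$ are genuinely Borel (not merely set-theoretic). Once these regularity and commutativity issues are settled, Proposition \ref{prop:furman:weakly} applies directly and yields the weak isomorphism.
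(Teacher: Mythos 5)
Your proposal is correct and follows essentially the same route as the paper: the paper likewise verifies Furman's criterion (Proposition \ref{prop:furman:weakly}) with the maps $p:Y\to X$ and $q:X\to Y$ defined by $\mathcal{G}y\cap X=\{p(y)\}$ and $\mathcal{H}x\cap Y=\{q(x)\}$, using the preceding lemma for ergodicity and freeness of the $(\mathcal{G}\times\mathcal{H})$-action for principality. Your extra remarks on Borel measurability of $p,q$ via Lusin--Novikov and on null-set saturation for condition (i) just flesh out details the paper leaves implicit.
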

  \begin{proof}
We can mimic the same construction done by Monod and Shalom in the group case \cite{MonShal} (compare also \cite{sarti:savini:23}) 
  For a given $u\in \Omega$, we define subsets
  $$\mathcal{G}u\coloneqq \left\{g\cdot u \mid g\in \mathcal{G}_{t_\Omega^\mathcal{G}(u)}\right\}\subset \Omega\,
 \qand \mathcal{H}u\coloneqq \left\{h\cdot u \mid h\in \mathcal{H}_{t_\Omega^\mathcal{H}(u)}\right\}\subset \Omega\,.$$
Since $X$ and $Y$ are strict fundamental domains, we then obtain maps $p:Y\to X$ and $q: X \to Y$ such that
$\mathcal{G}y\cap X = \{p(y)\}$ and $\mathcal{H}x\cap Y = \{q(x)\}$. We now check that these maps satisfy Furman's criterion (Proposition \ref{prop:furman:weakly}); note that this criterion applies since $\mathcal{G}\ltimes Y$ and $\mathcal{H}\ltimes X$ are ergodic by the previous lemma and principal by freeness of the action.
\begin{enumerate}[(i)]
  \item[(i)]$p_*(m|_Y)\prec m|_X$ and $q_*(m|_X)\prec m|_Y$: this follows since $\mathcal{G}$ and $\mathcal{H}$ act in a measure preserving way.
  \item[(ii)] $(p(g\diamond  y),p(y))\in \mathcal{R}_{\mathcal{H}\ltimes X}$ for all $y\in Y$ and $g\in \mathcal{G}_{t_\Omega^{\mathcal{G}}}$ and $(q(h\bullet  x),p(x))\in \mathcal{R}_{\mathcal{G}\ltimes Y}$ for all $x\in X$ and $h\in \mathcal{H}_{t_\Omega^{\mathcal{H}}}$: this follows by the definition of the actions $\diamond$ and $\bullet$ and because the $\mathcal{G}$ and $\mathcal{H}$-actions on $\Omega$ commute.
  \item[(iii)]  $(q(p(y)),y)\in \mathcal{R}_{\mathcal{G}\ltimes Y}$ for all $y\in Y$ and $(p(q(x)),x)\in \mathcal{R}_{\mathcal{H}\ltimes X}$ for all $x\in X$: this holds by definition.\qedhere
\end{enumerate}
  \end{proof}
  Combining this with Proposition \ref{proposition:weak:iso:BC} we obtain the following extension of \cite[Corollary 6.1]{sarti:savini:23}.
\begin{cor}\label{METwist}
 Let $(\Omega,m)$ be an ergodic $(\mathcal{G},\mathcal{H})$-coupling with fundamental domains $X,Y\subset \Omega$ for $\mathcal{G}$ and $\mathcal{H}$ respectively. Then we have an isomorphism
\[\pushQED{\qed}\Hmb^\bullet((\mathcal{G}\ltimes Y,  m|_Y); \underline{\mathbb{R}})\cong \Hmb^\bullet((\mathcal{H}\ltimes X,  m|_X);\underline{\mathbb{R}}). \qedhere\popQED\]
\end{cor}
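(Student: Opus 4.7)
The plan is to derive this corollary as an immediate consequence of Lemma \ref{lemma:iso} together with Proposition \ref{proposition:weak:iso:BC}; essentially all the real work has already been carried out in proving the lemma, and what remains is a short formal deduction.

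First, I would invoke Lemma \ref{lemma:iso} to conclude that the two discrete measured groupoids $(\mathcal{G}\ltimes Y, m|_Y)$ and $(\mathcal{H}\ltimes X, m|_X)$ arising from the coupling $(\Omega, m)$ are ergodic, principal, and weakly isomorphic in the sense of Definition \ref{def:homomorphism}(iii). This step uses the hypothesis that the coupling is ergodic (which is needed to ensure that each of the two quotient groupoids is ergodic, a prerequisite for applying Furman's criterion) together with freeness of the $\mathcal{G}\times\mathcal{H}$-action (which yields principality). The weak isomorphism is produced via the two Borel maps $p: Y \to X$ and $q: X \to Y$ obtained by intersecting $\mathcal{G}$-orbits with $X$ and $\mathcal{H}$-orbits with $Y$ respectively.

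Second, I would apply Proposition \ref{proposition:weak:iso:BC}, which asserts that weakly isomorphic \emph{ergodic} measured groupoids have isomorphic measurable bounded cohomology with coefficients in the trivial line bundle. Both ergodicity hypotheses are exactly what was established in the first step, so the proposition directly yields
\[\Hmb^\bullet((\mathcal{G}\ltimes Y, m|_Y); \underline{\mathbb{R}})\cong \Hmb^\bullet((\mathcal{H}\ltimes X, m|_X); \underline{\mathbb{R}}),\]
completing the proof.

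The main obstacle, as already visible in the proof of Lemma \ref{lemma:iso}, lies not in this final deduction but rather in verifying that the induced $\mathcal{G}$-action on $Y$ and $\mathcal{H}$-action on $X$ are ergodic and that Furman's three conditions (Proposition \ref{prop:furman:weakly}) are satisfied; both of these have already been dispensed with in the lemma. The present corollary therefore is, strictly speaking, only a packaging of the preceding results, and the expected difficulty is minimal.
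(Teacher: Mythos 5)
Your proposal is correct and matches the paper's own argument exactly: the corollary is deduced by combining Lemma \ref{lemma:iso} (weak isomorphism of the two ergodic, principal, discrete measured groupoids) with Proposition \ref{proposition:weak:iso:BC} (invariance of measurable bounded cohomology under weak isomorphism of ergodic measured groupoids). The paper treats this as immediate, which is consistent with your assessment that the remaining deduction is purely formal.
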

\begin{example}\label{RecoverSaSa} Given an ergodic pmp action $G \curvearrowright (Y, \mu)$ of a countable discrete group $G$ we have
\[
\Hmb^k((G\ltimes Y,\mu);\underline{\mathbb{R}})\cong \Hb^k(G; \Linf(Y)).
\]
As a special case of Corollary \ref{METwist} we thus recover \cite[Theorem 4.6]{MonShal}: If two countable discrete groups $G, H$ are measure equivalent via an ergodic coupling $(\Omega, \mu)$ with respective fundamental domains $X$ and $Y$, then
\[
\Hb^k(G; \Linf(Y)) \cong \Hb^k(H; \Linf(X)).
\]
\end{example}

\subsection{Application to transverse groupoids}\label{sec:4.3}
In this subsection, let $G$ be a countable discrete group with counting measure $m_G$ and let $(X, \mu, Y)$ be an ergodic integrable transverse $G$-system with transverse measured groupoid $(\cG, \nu)$. The following was observed by Bj\"orklund and the first named author (unpublished), motivated by questions about the uniqueness of envelopes for approximate groups.
\begin{prop}\label{METrans} Assume that $G$ is a countable discrete group. Then $(G \times Y, m_G \otimes \nu)$ with the $(G \times \cG)$-action given by \eqref{MEAction} defines an ergodic coupling between
the discrete measured groupoids $\underline{G}$ and $\cG$ with respective fundamental domains isomorphic to $(X, \mu)$ and $(Y, \nu)$.
\end{prop}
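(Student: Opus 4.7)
The plan is to verify directly the defining properties of a $(\underline{G}, \cG)$-coupling for $(\Omega, m) \coloneqq (G \times Y, m_G \otimes \nu)$ equipped with the commuting actions of \eqref{MEAction}: the $\underline{G}$- and $\cG$-actions commute and are free and measure-preserving, admit strict fundamental domains of finite measure of the stated isomorphism types, and the combined action is ergodic. The organising principle is that the two surjections
\[
G \times Y \xrightarrow{\mathrm{pr}_2} Y \qand G \times Y \xrightarrow{a} X, \qquad a(\gamma,y) = \gamma y
\]
realise the orbit spaces of the $\underline{G}$- and $\cG$-actions respectively; fundamental domains then arise by sectioning these surjections, and Proposition \ref{TransverseMeasure} together with the Campbell formula \eqref{equation:formula:integral} transports $m_G \otimes \nu$ to $\nu$ on $Y$ and to $\mu$ on $X$.

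\textbf{Fundamental domains.} The $\underline{G}$-orbits are the fibres $G \times \{y\}$ of $\mathrm{pr}_2$, so $F_{\underline{G}} \coloneqq \{e\} \times Y$ is a strict $\underline{G}$-fundamental domain; the canonical Borel isomorphism $F_{\underline{G}} \to Y$ pushes $(m_G \otimes \nu)|_{F_{\underline{G}}}$ to $\nu$, and $\nu(Y) < \infty$ by integrability of $(X,\mu,Y)$. For the $\cG$-action, a direct computation from \eqref{MEAction} shows that the orbit through $(\gamma, y_0)$ is $\{(\gamma g^{-1}, g y_0) : g \in Y_{y_0}\}$, which lies in $a^{-1}(\gamma y_0)$; under the essential freeness of $G \curvearrowright (X,\mu)$, this inclusion is an equality so that $\cG$-orbits coincide with the fibres of $a$. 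Fixing an injective cover $(C_n)_{n \in \bN}$ as in \S\ref{CFPOU}, the set $F_\cG \coloneqq \bigsqcup_n C_n$ then meets each $\cG$-orbit in exactly one point, and Proposition \ref{TransverseMeasure} ensures that $a|_{F_\cG}\colon F_\cG \to X$ is a Borel isomorphism modulo null sets which pushes $(m_G \otimes \nu)|_{F_\cG}$ to $\mu$, identifying $F_\cG$ with $(X,\mu)$ and giving total mass $1$.

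\textbf{Measure preservation, freeness, and ergodicity.} Measure preservation of the $\underline{G}$-action is immediate from left-invariance of $m_G$ (counting measure on the discrete group $G$). For the $\cG$-action, the Campbell formula \eqref{equation:formula:integral} disintegrates $m_G \otimes \nu$ over $\mu$ with \emph{unit counting measures} on each $a$-fibre as fibre measures; since $\cG$ acts simply transitively on every fibre, these counting measures are preserved. Freeness of the combined $\underline{G} \times \cG$-action reduces via \eqref{MEAction} to essential freeness of $G \curvearrowright (X,\mu)$: the identity $((g,y),k) \star (\gamma, y) = (\gamma, y)$ forces $g \in \mathrm{Stab}_G(y)$ and $k = \gamma g \gamma^{-1}$, whence $g = e$ and $k = e$ almost everywhere. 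For ergodicity, every $(\underline{G} \times \cG)$-invariant Borel set $E \subset \Omega$ is $\cG$-saturated, hence of the form $E = a^{-1}(A)$ for some Borel $A \subset X$, and the residual $\underline{G}$-invariance then amounts to $G$-invariance of $A$, which by the assumed ergodicity of $G \curvearrowright (X,\mu)$ must be null or co-null. The only genuinely non-trivial step throughout is the identification of $\cG$-orbits with fibres of $a$ and the concomitant transport of measure via an injective cover and Proposition \ref{TransverseMeasure}; once this correspondence is in place, the remaining verifications are routine, and essential freeness of the ambient $G$-action on $X$ is precisely what is needed to upgrade the inclusion of orbits into $a$-fibres to an equality.
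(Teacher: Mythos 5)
Your overall route is the same as the paper's: identify the $\underline{G}$-orbits with the fibres of $\mathrm{pr}_2$ and the $\cG$-orbits with the fibres of the suspension map $a(\gamma,y)=\gamma y$, transport $m_G\otimes\nu$ to $\nu$ on $\{e\}\times Y$ and to $\mu$ on a transversal of $a$, and deduce ergodicity from ergodicity of $\mu$. Your construction of the $\cG$-fundamental domain from an injective cover $(C_n)$ together with Proposition \ref{TransverseMeasure} is a nice concrete implementation of what the paper dismisses as ``routine''.

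There is, however, a genuine flaw: you make the key step conditional on essential freeness of $G \curvearrowright (X,\mu)$, which is \emph{not} a hypothesis of the proposition (Section \ref{sec:4.3} assumes only that the transverse system is ergodic and integrable). For the identification of $\cG$-orbits with $a$-fibres, freeness is simply irrelevant: if $\gamma' y' = \gamma y_0$ with $y'\in Y$, then $g \coloneqq (\gamma')^{-1}\gamma$ satisfies $g y_0 = y' \in Y$, so $g \in Y_{y_0}$ and $(g,y_0)\star(\gamma,y_0)=(\gamma g^{-1}, g y_0)=(\gamma',y')$; transitivity on fibres holds unconditionally, and freeness governs injectivity (stabilizers), not surjectivity onto the fibre, so your closing remark that essential freeness ``is precisely what is needed'' here has it backwards. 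Your second use of essential freeness, to show the combined $\underline{G}\times\cG$-action is free, does touch a real subtlety: the product action has nontrivial stabilizers exactly where $\mathrm{Stab}_G(y)$ is nontrivial (e.g.\ in the lattice case of Example \ref{LatticeCase}), so product-freeness genuinely cannot be proved from the stated hypotheses; the paper's own proof simply does not address freeness. But as far as proving the proposition as stated goes, you may not assume essential freeness; what does hold unconditionally, and what the quotient/weak-isomorphism machinery of Sections \ref{sec:4.1}--\ref{sec:4.2} actually uses, is that each of the two actions separately is free and admits the indicated finite-measure strict fundamental domains. As written, your argument establishes the statement only for essentially free systems, which is strictly weaker than the claim.
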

\begin{proof} The fact that $m_G \otimes \nu$ is invariant under  $G \times \cG$ is immediate from $\cG$-invariance of $\nu$ and unimodularity of $G$, and ergodicity follows from ergodicity of $\mu$.

 Moreover, since $G$ acts trivially on the second factor we have $G \backslash (G \times Y) \cong Y$, and the $\cG$-action on the quotient corresponds to the standard action on $Y$. 
 
It is a routine verification that $\cG$ acts transitively on the fibers of the $G$-equivariant suspension map $S: G \times Y \to X$, $(g,y) \mapsto g^{-1}y$, and hence $\cG \backslash (G \times Y) \cong X$ as $G$-spaces; moreover, under this identification, the restriction of $m_G \otimes \nu$ to a Borel fundamental domain for $\cG$ corresponds to the $G$-invariant measure $\mu$ on $X$. 
\end{proof}
\begin{cor}\label{MERig} Let $G$ and $H$ be countable discrete groups. If there is an integrable transverse $G$-system with transverse groupoid $(\cG, \nu_{\cG})$ and an integrable transverse $H$-system with transverse groupoid $(\cH, \nu_{\cH})$ such that  $(\cG, \nu_{\cG})$ and  $(\cH, \nu_{\cH})$ are strictly isomorphic, then $G$ and $H$ are measure equivalent.\qed
\end{cor}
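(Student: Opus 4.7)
The plan is to chain together several measure equivalences via transitivity of the ME relation established in Proposition \ref{OME}. First I would apply Proposition \ref{METrans} to the integrable transverse $G$-system $(X_G, \mu_G, Y_G)$, producing the $(\underline{G}, \cG)$-coupling $(G \times Y_G, m_G \otimes \nu_{\cG})$ and thereby witnessing $\underline{G} \ME (\cG, \nu_{\cG})$; the same construction applied to the $H$-system yields $\underline{H} \ME (\cH, \nu_{\cH})$.

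Next, I would leverage the hypothesized strict isomorphism $\phi: (\cG, \nu_{\cG}) \to (\cH, \nu_{\cH})$ to produce a $(\cG, \cH)$-coupling. Concretely, one can start from the canonical self-coupling on $\cG^{(1)}$ used to establish reflexivity in Proposition \ref{OME} and transport the right $\cG$-action through $\phi$ to obtain a $(\cG, \cH)$-coupling; this shows $(\cG, \nu_{\cG}) \ME (\cH, \nu_{\cH})$. Applying transitivity of ME twice then gives $\underline{G} \ME \underline{H}$, and by the observation recorded immediately after Definition \ref{DefCoupling}, measure equivalence of the one-object groupoids $\underline{G}, \underline{H}$ is by definition the same as measure equivalence of the underlying discrete groups $G$ and $H$ in the classical sense of Furman.

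I do not anticipate any serious obstacle here: the argument is essentially a diagram chase through the equivalence relation \ME. The only mild point worth checking is that Proposition \ref{METrans} still delivers a coupling in the absence of ergodicity (since Corollary \ref{MERig} does not mention ergodicity of the transverse systems), but inspection of the proof of Proposition \ref{METrans} shows that the $(\underline{G} \times \cG)$-invariance of $m_G \otimes \nu$, the identification of quotients with $(X,\mu)$ and $(Y,\nu)$ respectively, and the finiteness of both fundamental-domain measures all go through unchanged; ergodicity of $\mu$ is used there only to upgrade the resulting coupling to an ergodic one, which is not needed for the present corollary.
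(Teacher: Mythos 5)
Your argument is exactly the one the paper intends (it states the corollary with a \qed as an immediate consequence): Proposition \ref{METrans} gives $\underline{G}\ME(\cG,\nu_{\cG})$ and $\underline{H}\ME(\cH,\nu_{\cH})$, the strict isomorphism makes $\cG$ and $\cH$ measure equivalent (equivalently, one can read the $(\underline{H},\cH)$-coupling as an $(\underline{H},\cG)$-coupling through the isomorphism), and transitivity from Proposition \ref{OME} concludes. Your remark that ergodicity in Proposition \ref{METrans} is only needed for ergodicity of the resulting coupling, not for its existence, is a correct and worthwhile check given that the corollary's hypotheses do not assume ergodicity.
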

\begin{remark}[The non-discrete case] If $G$ is a non-discrete lcsc group, then we still have commuting measure-preserving actions of $G$ and $\cG$ on $(G \times Y, m_G \otimes \nu)$ via  \eqref{MEAction}, and the suspension map $S: G \times Y \to X$ still induces a $G$-equivariant Borel isomorphism $\cG \backslash (G \times Y) \cong X$ with countable fibers such that the restriction of $m_G \otimes \nu$ to a Borel fundamental domain for $\cG$ corresponds to $\mu$. However, the $G$-action on $G \times Y$ no longer has countable fibers. Nevertheless, there is a still a (more technical) notion of measure equivalence in this context so that $G$ and $\cG$ are measure equivalent, and hence Corollary \ref{MERig} extends to arbitrary lcsc groups $G$ and $H$. Combining this with the powerful rigidity results of Furman \cite{Furman99} that higher rank Lie groups are essentially determined by the isomorphism class of any of their transverse groupoids. We refer to forthcoming work of Bj\"orklund and the first named author for details.
\end{remark}
Proposition \ref{METrans} now implies a version of our main theorem under the additional assumptions that $G$ is discrete and the action is ergodic:
\begin{cor}\label{MainThmDiscrete} If $G$ is a countable discrete group and $(X, \mu, Y)$ is an ergodic integrable transverse $G$-system with transverse measured groupoid $(\cG, \nu)$, then there is an isomorphism
\[
\Hmb^\bullet((\mathcal{G}, \nu); \underline{\bR}) \cong \Hb^\bullet(G; \Linf(X, \mu)).
\]
\end{cor}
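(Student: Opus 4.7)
The plan is to combine Proposition \ref{METrans}, Corollary \ref{METwist}, and Example \ref{RecoverSaSa}, each of which has already been established in the preceding sections.

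First I would invoke Proposition \ref{METrans} to produce the ergodic $(\underline{G}, \cG)$-coupling $(\Omega, m) \coloneqq (G \times Y, m_G \otimes \nu)$. The proof of that proposition identifies a strict fundamental domain for the $\underline{G}$-action with $(Y, \nu)$ carrying the tautological $\cG$-action (via the projection $G \times Y \to Y$) and a strict fundamental domain for the $\cG$-action with $(X, \mu)$ carrying the original $G$-action (via the suspension map $S\colon G \times Y \to X$, $(g,y) \mapsto g^{-1}y$).

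Next I would feed this coupling into Corollary \ref{METwist}. With the naming convention of \S\ref{OEcocycle}, the $\underline{G}$-fundamental domain plays the role of ``$X$'' and the $\cG$-fundamental domain plays the role of ``$Y$'' in that corollary, yielding
\[
\Hmb^\bullet((\underline{G} \ltimes X, \mu); \underline{\bR}) \cong \Hmb^\bullet((\cG \ltimes Y, \nu); \underline{\bR}).
\]
Finally I would identify both sides with the desired objects. The left-hand side is precisely $\Hmb^\bullet((G \ltimes X, \mu); \underline{\bR})$, which by Example \ref{RecoverSaSa} equals $\Hb^\bullet(G; \Linf(X, \mu))$. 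For the right-hand side, since $Y = \cG^{(0)}$ and $\cG$ acts on $Y$ via its tautological action, the map $\cG^{(1)} \to (\cG \ltimes Y)^{(1)}$, $g \mapsto (g, s(g))$, is a strict isomorphism of discrete measured groupoids, so $\Hmb^\bullet((\cG \ltimes Y, \nu); \underline{\bR}) \cong \Hmb^\bullet((\cG, \nu); \underline{\bR})$. Stringing these isomorphisms together gives the claim.

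I do not anticipate a serious obstacle: the real work -- the Monod--Shalom/Sarti--Savini theorem on weakly isomorphic ergodic discrete measured groupoids, and the Bj\"orklund--Hartnick coupling for transverse groupoids -- is already packaged into Corollary \ref{METwist} and Proposition \ref{METrans}. The only points requiring care are verifying that the induced actions on the two fundamental domains agree with the natural $G$-action on $X$ and the tautological $\cG$-action on $Y$; this is implicit in the proof of Proposition \ref{METrans} and amounts to a short unwinding of the coupling cocycles $\alpha, \beta$ of \S\ref{OEcocycle}.
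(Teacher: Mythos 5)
Your proposal is correct and follows essentially the same route as the paper's own proof, which likewise deduces Corollary \ref{MainThmDiscrete} by combining Proposition \ref{METrans}, Corollary \ref{METwist} and Example \ref{RecoverSaSa}, together with the identification $(\cG \ltimes Y, \nu) \cong (\cG, \nu)$ that you spell out via $g \mapsto (g, s(g))$. The only difference is that you make explicit the matching of fundamental domains and induced actions, which the paper leaves implicit.
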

\begin{proof} Since $(\cG \ltimes Y,  \nu) \cong (\cG, \nu)$ this follows by combining Proposition \ref{METrans}, Corollary \ref{METwist} and Example \ref{RecoverSaSa}.
\end{proof}
\begin{remark}
Our main theorem says that Corollary \ref{MainThmDiscrete} also holds for general lcsc groups $G$ and without the ergodicity assumption. One would like to deduce the results for lcsc groups from a more general version of Proposition \ref{METrans} for non-discrete Borel groupoids. However, the technical tools for such a general result are currently missing; we will thus go down a different path and prove the non-discrete version of Corollary \ref{MainThmDiscrete} in a more direct way, following an approach of Ph.\ Blanc. As an added bonus, we can get rid of the ergodicity assumption. Nevertheless it would be interesting to know whether Proposition \ref{METrans} holds for non-discrete measured groupoids.
\end{remark}

\section{Transfer isomorphisms}\label{sec:trans:isom}
In this section we introduce the technical tools which will be needed to extend Corollary \ref{MainThmDiscrete} to the non-discrete case. Throughout this section, $G$ denotes a unimodular lcsc group $G$ with fixed choice of Haar measure $m_G$ and
 $(X, Y, \mu)$ denote a separated transverse system with transverse measured groupoid $(\cG, \nu)$. We fix a Borel map $\beta: X \to G$ and cocycle $\sigma: G\times X\to \Lambda(Y)$ as in Construction \ref{ConCocycle}.
\subsection{Transfer spaces}\label{sec:trans:space}
\begin{con}[Transfer space]\label{ConIso} We define the  \emph{transfer space} for the pair $(\cG, G)$ as
 the fiber product
\[\cG^{(1)}\times_{Y} X \coloneqq  \cG^{(1)} {}_A\times_{p_\beta} X \subseteq \cG^{(1)} \times X,\]
where $p_\beta: X \to Y$ is defined by $p_\beta(x) \coloneqq  \beta(x) x$.
If $(h,y) \in \cG$ and $x\in X$, then we have
\[
(h,y,x) \in \cG^{(1)}\times_{Y} X \iff \beta(x)x = y,
\]
and hence $y$ is redundant. We thus introduce the short-hand notation
\[
[h,x] \coloneqq  (h, \beta(x)x, x).
\]
Note that for every $x \in X$ we have $\beta(x)x \in Y$, hence $(h, \beta(x)x) \in \cG^{(1)}$ if and only if $h\beta(x)x \in Y$, i.e.\
\begin{equation}\label{FiberProdRed}
\cG^{(1)}\times_{Y} X = \{[h,x] \mid h \in G, x \in X,  h\beta(x)x \in Y\}.
\end{equation}
In particular, $\cG^{(1)} \times_Y X$ embeds as a subset of $\Lambda(Y) \times X$ via $[h,x] \mapsto (h,x)$.
\end{con}
\begin{con}[Commuting actions]\label{CommutingActions} The groupoid $\cG$ acts on the transfer space $\cG^{(1)} \times_Y X$ as follows: The anchor map is given by $[h,x] \mapsto h\beta(x)x$, and  if $y  = h\beta(x)x$, then the action is defined by
\[
(g, y)\ast [h,x] \coloneqq [gh, x].
\]
We now define a commuting action of $G$ on  $\cG^{(1)} \times_Y X$ using the Borel map $\beta$ and cocycle $\sigma$: If $[h,x] \in \cG^{(1)}\times_{Y} X$, then by \eqref{FiberProdRed} we have
\[
h\sigma(k,x)^{-1}\beta(kx)kx = h( \beta(kx)k\beta(x)^{-1})^{-1}\beta(kx)kx = h\beta(x)x \in Y,
\]
hence  $ [h\sigma(k,x)^{-1}, kx] \in \cG^{(1)}\times_{Y} X$. We may thus define a $G$-action by
\[
k \ast [h,x] \coloneqq [h\sigma(k,x)^{-1}, kx].
\]
By construction, this action commutes with the $\cG$-action, hence we obtain a $(\cG \times G)$-action on the transfer space given by
\begin{equation}\label{AstAction1}
((g, y), k) \ast [h,x] \coloneqq ([gh\sigma(k,x)^{-1}, kx]). \tag{$\ast$}
\end{equation}
\end{con}
\begin{center}
  \begin{figure}[!ht]
  \begin{tikzpicture}[scale=0.98]
\draw[thick] (-7,-2) .. controls (-5,-1) and (-3,-1) .. node[pos=0.9, anchor=north] {$Y$} (-1,-2);
\fill (-5.2,-1.38) circle (1.5pt) node[below left=1pt and -4pt] {\tiny  $\beta(x)x$};
\fill[blue] (-4.9,0) circle (1.5pt) node[above] {\color{blue}\tiny $x\;$};
\fill (-4.0,-1.24) circle (1.5pt);
\draw[ {Butt Cap[] Stealth[sep=0.5cm]}-] (-5.2,-1.25) to [out=110,in=-130] node[pos=0.5, anchor=east] {\tiny $\beta(x)$} (-5,-0.1) ;
\draw[ {Butt Cap[] Stealth[reversed, sep=0.5cm]}-,color=blue] (-5.2,-1.5) to [out=-70,in=-110] node[pos=0.5, below] {\tiny $h$} (-4.0,-1.34) ;

\draw[thick] (1,-2) .. controls (3,-1) and (5,-1) .. node[pos=0.9, anchor=north] {$Y$} (7,-2);
\fill (2.8,-1.38) circle (1.5pt) node[below left=1pt and -4pt] {\tiny  $\beta(x)x$};
\fill (3.1,0) circle (1.5pt) node[right] {\tiny $x\;$};
\fill (4.0,-1.24) circle (1.5pt);
\fill (1.5,-1.77) circle (1.5pt) node[below=1.5 pt] {\tiny $\;\;\beta(kx)kx$};
\fill[blue] (2.1,1.5) circle (1.5pt) node[above left] {\color{blue} \tiny $kx$};
\fill (5.3,-1.4) circle (1.5pt);

\draw[ {Butt Cap[] Stealth[sep=0.5cm]}-] (2.8,-1.25) to [out=110,in=-130] node[pos=0.5, anchor=east] {\tiny $\beta(x)$} (3,-0.1) ;
\draw[ {Butt Cap[] Stealth[reversed, sep=0.5cm]}-] (2.8,-1.5) to [out=-70,in=-110] node[pos=0.5, below] {\tiny $h$} (4.0,-1.34) ;
\draw[ {Butt Cap[] Stealth[reversed, sep=0.5cm]}-] (3.1,0.2) to [out=90,in=-20] node[pos=0.5, anchor=west] {\tiny $k$} (2.2,1.4) ;
\draw[ {Butt Cap[] Stealth[reversed, sep=1.5cm]}-] (1.9,1.4) to [out=-130,in=110] node[pos=0.5, anchor=east] {\tiny $\beta(kx)$} (1.4,-1.67) ;
\draw[ {Butt Cap[] Stealth[reversed, sep=0.5cm]}-] (4.03,-1.4) to [out=-70,in=-110] node[pos=0.5, below] {\tiny $g$} (5.3,-1.54) ;

\draw[blue, {Butt Cap[] Stealth[reversed, sep=1cm]}-] (1.55,-1.57) to [out=110,in=-120] (2.1,1.3);
\draw[blue,{Butt Cap[] Stealth[reversed, sep=0.5cm]}-] (2.1,1.3) to [out=-80,in=100] (3.1,0);
\draw[blue,{Butt Cap[] Stealth[reversed, sep=0.5cm]}-] (3.1,0) to [out=-100,in=60] node[right] {\color{blue} \tiny $gh\sigma(k,x)^{-1}$} (2.8,-1.38);
\draw[blue,{Butt Cap[] Stealth[reversed, sep=0.5cm]}-] (2.8,-1.38) to [out=-60,in=-120] (4.0,-1.24);
\draw[blue,{Butt Cap[] Stealth[reversed, sep=0.5cm]}-] (4.0,-1.24) to [out=-60,in=-120] (5.2,-1.45);

\draw[thick,->] (-1,1) to [out=30,in=150] node[pos=0.5, above] { $((g,y),k)\ast$}   (1,1);
\end{tikzpicture}\caption{The action $\ast$.}
  \end{figure}
\end{center}

\begin{prop}\label{psiIso} There is a well-defined Borel isomorphism
\begin{equation}\label{psi}
  \psi: G\times Y\to \mathcal{G}^{(1)} \times_Y X\,,\;\;\; \psi(\gamma,y)=[\gamma^{-1}\beta(\gamma y)^{-1}, \gamma y]\end{equation}
 with inverse given by
  $$\phi: \mathcal{G}^{(1)} \times_Y X \to G\times Y\,,\;\;\; \phi([h,x])=(\beta(x)^{-1} h^{-1} , h \beta(x) x).$$
 Moreover, this isomorphism intertwines the induced $(\cG \times G)$-action action on $G\times Y$ with the action on the transfer space given by \eqref{AstAction1}.
\end{prop}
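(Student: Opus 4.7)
\medskip
\noindent\textbf{Proof plan for Proposition \ref{psiIso}.}
The plan is to verify each of the three claims (well-definedness, mutual inversion, and equivariance) by direct unpacking of the definitions, using only the normalization $\beta|_Y \equiv e$, the cocycle identity \eqref{CocycleId}, and the description of the transfer space given in \eqref{FiberProdRed}.

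First, I would check that $\psi$ lands in $\mathcal{G}^{(1)}\times_Y X$ and that $\phi$ lands in $G\times Y$. For $\psi(\gamma,y)=[\gamma^{-1}\beta(\gamma y)^{-1},\gamma y]$, applying the defining condition in \eqref{FiberProdRed} gives
\[
\gamma^{-1}\beta(\gamma y)^{-1}\cdot \beta(\gamma y)\cdot \gamma y \;=\; y \in Y,
\]
so $\psi$ is well defined, while for $\phi([h,x])$ the second component equals $h\beta(x)x\in Y$ by assumption, so $\phi$ is well defined. Borel measurability of both maps is immediate from the Borel structure on $\beta$ and the structure maps of $\cG$.

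Next, I would compute the two compositions. For $\phi\circ\psi$, substituting and cancelling $\beta(\gamma y)^{-1}\beta(\gamma y)$ yields $(\gamma,y)$. For $\psi\circ\phi$, setting $\gamma=\beta(x)^{-1}h^{-1}$ and $y=h\beta(x)x$ one has $\gamma y=x$, so $\beta(\gamma y)=\beta(x)$, and the expression collapses back to $[h,x]$. This step is routine and requires only the normalization of $\beta$.

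The main obstacle (and the only place where some care is needed) is the equivariance with respect to the full action \eqref{MEAction}, which factors as the commuting actions of $\cG$ and $G$. On the $\cG$-factor one computes
\[
\psi\bigl((g,y)\star(\gamma,y)\bigr)=\psi(\gamma g^{-1},gy)=[g\gamma^{-1}\beta(\gamma y)^{-1},\gamma y]=(g,y)\ast\psi(\gamma,y),
\]
using that $(\gamma g^{-1})(gy)=\gamma y$ and hence $\beta(\gamma g^{-1}\cdot gy)=\beta(\gamma y)$. The hard step is the $G$-factor: one must verify
\[
\psi(k\gamma,y) \;=\; k\ast\psi(\gamma,y) \;=\; [\gamma^{-1}\beta(\gamma y)^{-1}\sigma(k,\gamma y)^{-1},\,k\gamma y].
\]
This is where the precise form of the cocycle $\sigma$ enters: substituting $\sigma(k,\gamma y)=\beta(k\gamma y)\,k\,\beta(\gamma y)^{-1}$ and inverting gives
\[
\beta(\gamma y)^{-1}\sigma(k,\gamma y)^{-1}\;=\;k^{-1}\beta(k\gamma y)^{-1},
\]
and the result follows immediately. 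Combining the two equivariances yields the statement for the full $(\cG\times G)$-action, which concludes the proof.
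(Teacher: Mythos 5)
Your proposal is correct and takes essentially the same route as the paper's proof: well-definedness is checked directly against the membership condition \eqref{FiberProdRed}, the two compositions are verified by the same one-line cancellations (with $\gamma y = x$ giving $\beta(\gamma y)=\beta(x)$), and equivariance reduces to the same key identity $\beta(\gamma y)^{-1}\sigma(k,\gamma y)^{-1}=k^{-1}\beta(k\gamma y)^{-1}$ coming from the explicit form of $\sigma$. The only cosmetic difference is that you verify the $\cG$- and $G$-equivariances separately and then combine them (legitimate, since the two actions commute), whereas the paper computes the full $(\cG\times G)$-equivariance in a single calculation.
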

\begin{proof} We first show that $\psi$ is well-defined. Since $s(\beta(\gamma y), \gamma y) = t(\gamma, y)$ we can form the product
\[
(\beta(\gamma y), \gamma y)(\gamma, y) = (\beta(\gamma y)\gamma, y),
\]
which has source $y$ and target $\beta(\gamma y)\gamma y \in Y$, hence is contained in $\cG^{(1)}$. Now pass to the inverse to see that $((\beta(\gamma y), \gamma, y)(\gamma, y))^{-1} \in \cG^{(1)}$; this shows that $\psi$ is well-defined, and it is Borel as a composition of Borel maps. Similarly, $\phi$ is well-defined by \eqref{FiberProdRed} and Borel by definition. We compute 
  \begin{align*}
    \psi \circ \phi([h,x]) &= \psi(\beta(x)^{-1} h^{-1},h \beta(x) x)\\
     & = [h \beta(x) \beta(h \beta(x)\beta(x)^{-1} h^{-1}x)^{-1} , \beta(x)^{-1} h^{-1}h\beta(x) x ] = [h,x]
  \end{align*}
  and dually
  \begin{align*}
  \phi\circ \psi(\gamma,y)&= \phi ([\gamma^{-1}\beta( \gamma y)^{-1},\gamma y])\\
 &= (\beta(\gamma y)^{-1}\beta( \gamma y) \gamma , \gamma^{-1}\beta(\gamma y)^{-1} \beta(\gamma y)\gamma y)=(\gamma,y),
  \end{align*}
  which shows that $\psi$ and $\phi$ are mutually inverse Borel isomorphisms. To see equivariance we observe that, on the one hand,
\begin{align*}
    \psi(((g,y),k)\star (\gamma,y))&=\psi(k\gamma g^{-1},gy) =([g\gamma^{-1}k^{-1} \beta(k\gamma y)^{-1},k\gamma y]),
\end{align*}
and on the other hand,
\begin{align*}
   ((g,y),k)  \ast \psi(\gamma,y)&=   ((g,y),k)\ast  ([\gamma^{-1}\beta(\gamma y)^{-1},\gamma y])\\
   &=[g(\gamma^{-1}\beta(\gamma y)^{-1})\sigma(k,\gamma y)^{-1},k\gamma y].
\end{align*}
Spelling out the definition of $\sigma$ now yields
\[
g(\gamma^{-1}\beta(\gamma y)^{-1})\sigma(k,\gamma y)^{-1} = g\gamma^{-1}\beta(\gamma y)^{-1}(\beta(k \gamma y)k \beta(\gamma y)^{-1})^{-1}
= g \gamma^{-1}k^{-1}\beta(k\gamma y)^{-1},
\]
which shows that the two expressions are equal. This concludes the proof.
\end{proof}
\begin{example}
  In the lattice case of Example \ref{LatticeCase}, we have
\[
G \times Y \cong G \qand  \mathcal{G}^{(1)} \times_Y X = \mathcal{G}^{(1)} \times X \cong \Gamma \times \Gamma \backslash G.
\]
and $\psi$ boils down to the Borel isomorphism
$$G\cong\Gamma\times \Gamma\backslash G\,,\;\;\; g\mapsto (\sigma(g^{-1},\Gamma g),\Gamma g)$$
described in  the proof of \cite[Lemma 5.4.3]{monod:libro}. 
\end{example}
\begin{no} We will need the following slightly technical extension of Proposition \ref{psiIso}. Here, given a a Borel $G$-space $A$ and $n \in \bN$, we extend the $(\cG \times G)$-action on the transfer space to an action on $A^n \times \cG^{(1)}\times_Y X$
by
\begin{gather}\label{AstAction}
\cG \times G \curvearrowright A^n \times \cG^{(1)}\times_Y X, \tag{$\ast_A$}\\ ((g,y),k) \ast_A (\underline{a},[h,x])\coloneqq (\sigma(k,x)\cdot \underline{a}, [gh\sigma(k,x)^{-1}, kx]).\notag
\end{gather}
\end{no}
\begin{cor}\label{IncludeS} For every Borel $G$-space $A$ and every $n \in \mathbb N_0$ there is a Borel isomorphism
$\eta_{n}: A^{n+1}\times \mathcal{G}^{(1)}\times_Y X \to A^{n+1}\times G\times Y$ given by 
\begin{equation}\label{Defetan}
\eta_{n}(\underline{a}, [h,x])\coloneqq  (h \cdot \underline{a}, \varphi([h,x]))= (h\cdot \underline{a},\beta(x)^{-1} h^{-1},h\beta(x)x),
\end{equation}
which is equivariant for the  $(\cG \times G)$-actions given by \eqref{StarAction} and \eqref{AstAction}.
Moreover, a Borel inverse 
$\xi_n: A^{n+1}\times G\times Y  \to    A^{n+1}\times \mathcal{G}^{(1)}\times_Y X$  for $\eta_n$ is given by
\begin{equation}\label{Defxin}
  \xi_n(\underline{a},\gamma,y)\coloneqq (\beta(\gamma y)\gamma\cdot \underline{a},\psi(\gamma,y)) =(\beta(\gamma y)\gamma \cdot \underline{a},[\gamma^{-1}\beta(\gamma y)^{-1},\gamma y])\,.
\end{equation}
\end{cor}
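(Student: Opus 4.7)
My plan is to bootstrap the corollary from the Borel isomorphism $\psi \colon G \times Y \to \mathcal{G}^{(1)} \times_Y X$ of Proposition \ref{psiIso} (with inverse $\varphi$) by twisting the $A^{n+1}$ factor using the action of $G$ on $A$. The key insight is that the twist factor $h$ in the definition of $\eta_n$, and correspondingly $\beta(\gamma y)\gamma$ in the definition of $\xi_n$, are exactly what is needed to convert between the two cocycle-twisted actions.

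First, I would check that $\eta_n$ and $\xi_n$ are well-defined Borel maps. Well-definedness of the second coordinate follows directly from Proposition \ref{psiIso}, and the first coordinate is Borel as a composition of the Borel action map $G \times A^{n+1} \to A^{n+1}$ with the obvious Borel maps on the rest of the data. Next, I would verify that $\eta_n$ and $\xi_n$ are mutually inverse. The second coordinate reduces to $\varphi \circ \psi = \mathrm{id}$ and $\psi \circ \varphi = \mathrm{id}$ from Proposition \ref{psiIso}. For the first coordinate, one computes for instance
\[
\eta_n(\xi_n(\underline{a},\gamma,y)) = \bigl((\gamma^{-1}\beta(\gamma y)^{-1})(\beta(\gamma y)\gamma) \cdot \underline{a},\, \varphi([\gamma^{-1}\beta(\gamma y)^{-1},\gamma y])\bigr) = (\underline{a},\gamma,y),
\]
and symmetrically $\xi_n \circ \eta_n = \mathrm{id}$, since the twist factors telescope to the identity in $G$.

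The core content is the equivariance check: given $((g,y),k) \in (\cG \times G)$ acting at an allowable point $(\underline{a},[h,x])$ (so $y = h\beta(x)x$), one must show
\[
\eta_n\bigl(((g,y),k) \ast_A (\underline{a},[h,x])\bigr) = ((g,y),k) \star_A \eta_n(\underline{a},[h,x]).
\]
Unwinding the definitions, the left-hand side equals $(gh\sigma(k,x)^{-1}\cdot \sigma(k,x) \cdot \underline{a},\, \varphi([gh\sigma(k,x)^{-1},kx])) = (gh \cdot \underline{a},\, \varphi([gh\sigma(k,x)^{-1},kx]))$, so the $A^{n+1}$ components of the two sides trivially match (both being $gh \cdot \underline{a}$) once the $\sigma(k,x)$ twist cancels. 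For the remaining $G \times Y$ component, one applies $\varphi$ and uses the defining identity $\sigma(k,x) = \beta(kx)k\beta(x)^{-1}$, which gives $\beta(kx)^{-1}\sigma(k,x) = k\beta(x)^{-1}$ and $\sigma(k,x)^{-1}\beta(kx)kx = \beta(x)x$. Substituting yields $\varphi([gh\sigma(k,x)^{-1},kx]) = (k\beta(x)^{-1}h^{-1}g^{-1},\, gh\beta(x)x)$, which agrees with $((g,y),k) \star_A (\beta(x)^{-1}h^{-1}, h\beta(x)x) = (k\beta(x)^{-1}h^{-1}g^{-1},\, gh\beta(x)x)$ directly from the definition of $\star_A$.

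The main (and only real) obstacle is keeping the cocycle bookkeeping straight in the equivariance computation; once one notices the two simplifications $\beta(kx)^{-1}\sigma(k,x) = k\beta(x)^{-1}$ and $\sigma(k,x)^{-1}\beta(kx)k = \beta(x)$ coming from the definition of $\sigma$, the calculation collapses to the already-established equivariance of $\psi$ from Proposition \ref{psiIso}, with the $A^{n+1}$ twists $h$ and $\beta(\gamma y)\gamma$ precisely absorbing the $g$-factor that the $\star_A$-action applies to the $A$-coordinate. Thus the corollary is essentially Proposition \ref{psiIso} tensored with the $G$-action on $A^{n+1}$, with the twist chosen so as to make everything commute.
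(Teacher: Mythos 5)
Your proposal is correct and follows essentially the same route as the paper: reduce to Proposition \ref{psiIso}, verify that the twist factors telescope so that $\eta_n$ and $\xi_n$ are mutually inverse, and then do the cocycle bookkeeping with $\sigma(k,x)=\beta(kx)k\beta(x)^{-1}$ for equivariance. The only (immaterial) difference is that you check equivariance of $\eta_n$ directly via $\varphi$, while the paper checks equivariance of $\xi_n$ and quotes the second-component identity from the proof of Proposition \ref{psiIso}; since an equivariant bijection has an equivariant inverse, the two verifications are interchangeable.
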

\begin{proof} Bijectivity of is immediate from bijectivity of $\varphi$ and $\psi$. Precisely, we have
\begin{align*}
  \eta_n(\xi_n(\underline{a},\gamma,y))&=\eta_n(\beta(\gamma y)\gamma \cdot \underline{a},[\gamma^{-1}\beta(\gamma y)^{-1},\gamma y])\\
  &= (\underline{a}, \gamma,y)
\end{align*}  
and 
\begin{align*}
  \xi_n(\eta_n(\underline{a}, [h,x]))&=\xi_n(h\cdot \underline{a},\beta(x)^{-1} h^{-1},h\beta(x)x)\\
  &= (\underline{a}, [h,x])\,.
\end{align*} 
The computation for equivariance is similar as in the proof of Proposition \ref{psiIso}: On the one hand we have
\begin{align*}
    \xi_n(((g,y),k)\star_A (\underline{a}, \gamma,y))&=\xi_n(g\cdot \underline{a}, k\gamma g^{-1},gy)\\
    &=(\beta(k\gamma y)k\gamma  \cdot \underline{a}, \psi(k\gamma g^{-1},gy))\\
    &=(\beta(k\gamma y)k\gamma \cdot \underline{a}, [g\gamma^{-1}k^{-1} \beta(k\gamma y)^{-1},k\gamma y]),
\end{align*}
and on the other hand,
\begin{align*}
   ((g,y),k)  \ast_A \xi_n(\underline{a}, \gamma,y)&=   ((g,y),k)\ast_A  (\beta(\gamma y)\gamma \cdot \underline{a},[\gamma^{-1}\beta(\gamma y)^{-1},\gamma y])\\
   &=(\sigma(k,\gamma y)\beta(\gamma y)\gamma \cdot \underline{a},[g(\gamma^{-1}\beta(\gamma y)^{-1})\sigma(k,\gamma y)^{-1},k\gamma y])\,.
\end{align*}
We have seen in the proof of Proposition \ref{psiIso} that the second components are equal. Moreover, exploiting the definition of $\sigma$ we can conclude that 
$$\sigma(k,\gamma y)\beta(\gamma y)=\beta(k\gamma y)k\beta(\gamma y)^{-1}\beta(\gamma y)=\beta(k\gamma y)k$$
hence also the first components coincide.
This concludes the proof.
\end{proof}
\subsection{Transfer measures}\label{sec:trans:meas}
\begin{con}[Transfer measure] On $G \times Y$ there is a canonical measure class defined by the infinite measure $m_G \otimes \nu$, where $\nu$ denotes the transverse measure of $\mu$. We now pick a partition of unity $\rho$ for $(X,Y)$ (cf. \S \ref{CFPOU}); then by \eqref{POU} the measure $\rho \cdot m_G \otimes \nu$ is a probability measure representing this measure class. Via the Borel isomorphism $\psi$ from \eqref{psi} this representative corresponds to a probability measure  $\mu_\rho$ on the fiber product $\mathcal{G}^{(1)} \times_Y X$. We refer to $\mu_\rho$ as the \emph{transfer measure} defined by $\mu$ and $\rho$. By construction we have:
\end{con}
 \begin{cor}\label{CorEta1} For every Lebesgue $G$-space $(A, \tau)$ and every $n \in \mathbb N_0$ the map $\eta_n$ from \eqref{Defetan} defines a pmp isomorphism of Lebesgue $(\cG \times G)$-spaces
 \[
  (A^{n+1}\times \mathcal{G}^{(1)}\times_Y X, \tau^{\otimes n+1} \otimes {\mu}_\rho) \cong (A^{n+1}\times G\times Y , \rho \cdot \tau^{\otimes n+1} \otimes m_G \otimes \nu).
 \]
 \end{cor}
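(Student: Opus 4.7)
The corollary combines three assertions about $\eta_n$: that it is a Borel isomorphism, that it intertwines the $(\cG\times G)$-actions \eqref{StarAction} and \eqref{AstAction}, and that it sends $\tau^{\otimes n+1}\otimes \mu_\rho$ to $\rho\cdot\tau^{\otimes n+1}\otimes m_G\otimes \nu$. The first two are exactly the content of Corollary~\ref{IncludeS}, so only the pushforward identity requires fresh work.

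The plan is to reduce the measure computation to Proposition~\ref{psiIso} via Fubini. By construction of the transfer measure, $\mu_\rho = \psi_*(\rho\cdot m_G\otimes \nu)$, so for any nonnegative Borel function $F$ on $A^{n+1}\times\cG^{(1)}\times_Y X$ one has
\[
\int F\, d(\tau^{\otimes n+1}\otimes \mu_\rho)
= \int F\bigl(\underline{a},\psi(\gamma,y)\bigr)\,\rho(\gamma,y)\,d\tau^{\otimes n+1}(\underline{a})\,dm_G(\gamma)\,d\nu(y).
\]
I apply this to $F = H\circ \eta_n$ for an arbitrary nonnegative Borel function $H$ on $A^{n+1}\times G\times Y$. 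Since $\eta_n(\underline{a},\psi(\gamma,y)) = \bigl(\gamma^{-1}\beta(\gamma y)^{-1}\cdot\underline{a},\gamma,y\bigr)$ by the formulas in Proposition~\ref{psiIso} and \eqref{Defetan}, the right-hand side becomes
\[
\int H\bigl(\gamma^{-1}\beta(\gamma y)^{-1}\cdot\underline{a},\gamma,y\bigr)\,\rho(\gamma,y)\,d\tau^{\otimes n+1}(\underline{a})\,dm_G(\gamma)\,d\nu(y).
\]

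Because the $G$-action on $(A,\tau)$ is measure-preserving---the standing hypothesis under which a \emph{pmp} statement is meaningful---for each fixed $(\gamma,y)$ the substitution $\underline{a}'=\gamma^{-1}\beta(\gamma y)^{-1}\cdot\underline{a}$ preserves the product measure $\tau^{\otimes n+1}$ on the innermost integral. After this change of variables the triple integral rearranges to $\int H\, d(\rho\cdot\tau^{\otimes n+1}\otimes m_G\otimes \nu)$, yielding the desired identity of pushforwards. That both measures have total mass one (so we really have a pmp isomorphism of probability spaces, once $\tau$ itself is a probability) follows from $\int\rho\, d(m_G\otimes\nu)=\mu(X)=1$, i.e.\ from the partition-of-unity formula \eqref{POU}.

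I expect no real obstacle: the entire content of the corollary is the packaging of Proposition~\ref{psiIso} together with the natural twisted action on $A^{n+1}$ into a single $(\cG\times G)$-equivariant measure-preserving isomorphism. The only bookkeeping point is that $\mu_\rho$ implicitly carries a disintegration over $Y$, but this is handled cleanly by the explicit formula $\mu_\rho=\psi_*(\rho\cdot m_G\otimes\nu)$ together with the Borel bijectivity of $\psi$ from Proposition~\ref{psiIso}.
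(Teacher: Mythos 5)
Your proposal is correct and is essentially the paper's own (implicit) argument: the paper gives no proof beyond the words ``by construction'', because $\mu_\rho$ is defined as $\psi_*(\rho\cdot m_G\otimes\nu)$ and the bijectivity and $(\cG\times G)$-equivariance of $\eta_n$ are already Corollary \ref{IncludeS}, so your Fubini computation is exactly the unwinding of that definition. Your caveat about the twist $\underline{a}\mapsto \gamma^{-1}\beta(\gamma y)^{-1}\cdot\underline{a}$ is the one genuinely non-trivial point: a Lebesgue $G$-space in this paper carries only a quasi-invariant measure (as with the Poisson boundary used later), so the substitution preserves $\tau^{\otimes n+1}$ only when $\tau$ is actually $G$-invariant; in general $\eta_n$ is merely measure-class preserving---which is precisely what the paper uses in \S\ref{BasicIsoBanach}---and the ``pmp'' wording of the corollary should be read with that proviso (or with the invariance hypothesis you added).
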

 We have the following explicit formula for $\mu_\rho$:
\begin{lemma}\label{murho} If $F$ is a non-negative bounded Borel function on $\mathcal{G}^{(1)} \times_Y X$, then
 \[
 \int_{\mathcal{G}^{(1)} \times_Y X} F \; d{\mu}_\rho = \int_X \sum_{g \in Y_x} F([g\beta(x)^{-1}, x])   \rho(g^{-1}, gx)\; d \mu(x).
 \]
 \end{lemma}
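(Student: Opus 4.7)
The formula is essentially a change of variables under $\psi$ followed by the Campbell formula. By construction, $\mu_\rho = \psi_\ast(\rho \cdot (m_G \otimes \nu))$, so for any non-negative bounded Borel $F$ on $\mathcal{G}^{(1)} \times_Y X$ we have
\[
\int_{\mathcal{G}^{(1)} \times_Y X} F\, d\mu_\rho = \int_{G \times Y} F(\psi(\gamma,y))\, \rho(\gamma,y)\, d(m_G \otimes \nu)(\gamma, y).
\]
The plan is then to apply the Campbell formula \eqref{equation:formula:integral} to the non-negative bounded Borel function $F'(\gamma,y) \coloneqq F(\psi(\gamma,y)) \rho(\gamma,y)$ on $G \times Y$, which yields
\[
\int_{G \times Y} F'\, d(m_G \otimes \nu) = \int_X \sum_{g \in Y_x} F'(g^{-1}, g x)\, d\mu(x) = \int_X \sum_{g \in Y_x} F(\psi(g^{-1}, g x)) \rho(g^{-1}, g x)\, d\mu(x).
\]

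The only remaining step is to simplify $\psi(g^{-1}, g x)$ when $g \in Y_x$ (so that $g x \in Y$). Unravelling the definition of $\psi$ from \eqref{psi} with $\gamma = g^{-1}$ and $y = g x$, we obtain
\[
\psi(g^{-1}, g x) = [(g^{-1})^{-1} \beta(g^{-1} \cdot g x)^{-1},\, g^{-1} \cdot g x] = [g \beta(x)^{-1}, x],
\]
where we used the normalization $\beta(g x) = e$ is not invoked here (in fact the formula uses $\beta(x)$ directly). Substituting this identity into the previous display produces exactly the claimed formula.

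\textbf{Potential obstacle.} There is no serious mathematical obstacle: the argument is a one-line composition of the definition of push-forward, the Campbell formula, and the explicit form of $\psi$. The only care needed is to verify that $[g\beta(x)^{-1}, x]$ is a legitimate element of $\mathcal{G}^{(1)} \times_Y X$ whenever $g \in Y_x$; this is immediate from \eqref{FiberProdRed} since $g \beta(x)^{-1} \cdot \beta(x) x = g x \in Y$, so the sum on the right-hand side is well-defined.
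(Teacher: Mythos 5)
Your argument is correct and is essentially the paper's own proof: both unwind $\mu_\rho = \psi_*(\rho\cdot m_G\otimes\nu)$, apply the Campbell formula \eqref{equation:formula:integral} to $(F\circ\psi)\cdot\rho$, and evaluate $\psi(g^{-1},gx)=[g\beta(x)^{-1},x]$. The only blemish is the garbled sentence about the normalization of $\beta$; as you correctly indicate, no normalization is needed at this step, so nothing is missing mathematically.
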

 \begin{proof} Since $\mu_\rho =  \psi_*(\rho \cdot m_G\otimes \nu)$ it follows from \eqref{equation:formula:integral} that
 \begin{align*}
  \int_{\mathcal{G}^{(1)} \times_Y X} F\; d\mu_{\rho} &= \int_{G} \int_Y (F \circ\psi)(g,y) \rho(g,y) \, d \nu(y) \, d m_G(g)\\
  &= \int_X T((F \circ \psi) \cdot \rho)(x) d\mu(x).
 \end{align*}
 We can now write out the integrand explicitly:
 \begin{align*}
 T((F \circ \psi) \cdot \rho)(x) &=  \sum_{\gamma \in Y_x}(F \circ \psi)(\gamma^{-1}, gx) \rho(\gamma^{-1}, gx)\\
 &= \sum_{g \in Y_x}  F([g \beta(x)^{-1} ,x])  \rho(g^{-1}, gx).\qedhere
 \end{align*}
 \end{proof}
 \begin{cor}\label{PushRhoAway} If $\pi:  \mathcal{G}^{(1)}\times_Y X \to X$, $[h,x]  \mapsto x$ denotes the canonical projection, then $\pi_*\mu_{\rho} = \mu$.
\end{cor}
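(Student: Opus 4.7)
\medskip

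\noindent\textbf{Proof proposal.} The statement is a direct consequence of Lemma \ref{murho} combined with the partition-of-unity identity \eqref{DefPOU}, so the plan is to simply unwind definitions and compute the pushforward against an arbitrary bounded Borel test function.

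First I would fix a bounded non-negative Borel function $f: X \to \bR$ and apply Lemma \ref{murho} to $F \coloneqq f \circ \pi$ regarded as a bounded Borel function on $\mathcal{G}^{(1)} \times_Y X$. The crucial observation is that
\[
F([g\beta(x)^{-1}, x]) = f(\pi([g\beta(x)^{-1}, x])) = f(x),
\]
which is independent of $g \in Y_x$. Hence the value $f(x)$ pulls out of the inner sum.

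Next I would invoke the partition-of-unity property \eqref{DefPOU}, namely $\sum_{g \in Y_x} \rho(g^{-1}, g x) = 1$ for all $x \in X$, to obtain
\[
\int_{\mathcal{G}^{(1)} \times_Y X} (f\circ \pi) \, d \mu_\rho = \int_X f(x) \sum_{g \in Y_x} \rho(g^{-1}, g x) \, d \mu(x) = \int_X f(x) \, d \mu(x).
\]
Since this holds for all bounded Borel $f$, we conclude $\pi_* \mu_\rho = \mu$.

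There is essentially no obstacle here: the result is designed to hold and all the nontrivial content was already absorbed into Lemma \ref{murho} (via the Campbell formula) and the definition of a Borel partition of unity. The only point requiring any care is confirming that the inner sum really is well-defined and that Fubini-type interchanges implicit in Lemma \ref{murho} apply to $F = f \circ \pi$, but this is immediate from boundedness of $f$ and finiteness of $\mu$.
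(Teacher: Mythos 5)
Your proposal is correct and is essentially identical to the paper's own argument: both apply Lemma \ref{murho} to the test function $f\circ\pi$, observe that $F([g\beta(x)^{-1},x])=f(x)$ is independent of $g\in Y_x$, pull $f(x)$ out of the inner sum, and conclude via the partition-of-unity identity \eqref{DefPOU}. Nothing further is needed.
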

\begin{proof} Let $f$ be a bounded Borel function on $X$. By Lemma \ref{murho} we then have
\begin{align*}
(\pi_*\mu_\rho)(f) &=  \int_X \sum_{g \in Y_x} f(x)  \rho(g^{-1}, gx)\; d \mu(x) = \int_X f(x)  \sum_{g \in Y_x} \rho(g^{-1}, gx)\; d \mu(x)\\
&= \int_X f(x) T\rho(x) \; d\mu(x) =  \int_X f \; d \mu(x) = \mu(f).\qedhere
\end{align*}
\end{proof}

\subsection{Transfer isomorphisms}\label{sec:isomo}
\begin{no}\label{TwistedAction} From now on we fix a Lebesgue $G$-space $(A,\tau)$ and some $n \in \bN_0$. We then consider $A^{n+1} \times Y$ as a $\cG$-space with respect to the induced action.
There are two natural $G$-actions on $(A^{n+1} \times X,\tau^{\otimes n+1}\otimes \mu)$, namely the untwisted (i.e.\ diagonal) action given by
$k\cdot (\underline{a},x)=(k\cdot \underline{a},kx)$, and the \emph{$\sigma$-twisted action} given by
\begin{equation}\label{eq:twisted:action}
k\ast (\underline{a},x)=(\sigma(k,x)\cdot \underline{a},kx)\,,
\end{equation}
For distinction, we denote the respective $G$-spaces by $A^{n+1} \times X$ and $A_{\sigma}^{n+1} \times X$.
\end{no}
\begin{thm}[Transfer isomorphism]\label{BanachIso} There is an isometric isomorphism
\[
j^{n}_A: \Linf(A_\sigma^{n+1} \times X, \tau^{\otimes n+1}\otimes \mu)^G \to \Linf(A^{n+1} \times Y, \tau^{\otimes n+1}\otimes \nu)^{\cG},
\]
which on the level of representatives if given by restriction to $A^{n+1} \times Y$.
\end{thm}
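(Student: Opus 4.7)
The plan is to construct the inverse of $j^{n}_A$ explicitly using the section $\beta$ and then verify that the two maps are mutually inverse isometries. The backbone of the argument is a single algebraic identity for the cocycle $\sigma$: because $\beta(y)=e$ for every $y \in Y$, one has
\[
\sigma(\beta(x),x) \;=\; \beta(\beta(x)x)\,\beta(x)\,\beta(x)^{-1} \;=\; \beta(\beta(x)x) \;=\; e
\]
for every $x \in X$. Combined with the cocycle relation \eqref{CocycleId}, this yields the structural fact that for every $k \in G$ and $x \in X$ the pair $(\sigma(k,x),\beta(x)x)$ belongs to $\cG^{(1)}$ and carries $\beta(x)x \in Y$ to $\beta(kx)kx \in Y$.

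First I would define the extension map $\mathrm{ext}: \Linf(A^{n+1}\times Y, \tau^{\otimes n+1}\otimes\nu)^{\cG} \to \Linf(A^{n+1}_\sigma\times X, \tau^{\otimes n+1}\otimes\mu)^{G}$ on Borel representatives by
\[
(\mathrm{ext}\,f)(\underline{a},x) \;\coloneqq\; f(\underline{a},\beta(x)x).
\]
This is bounded and Borel because $\beta$ is Borel. To check $G$-invariance under the twisted action \eqref{eq:twisted:action}, observe that the structural fact together with $\cG$-invariance of $f$ yield
\[
(\mathrm{ext}\,f)(k\ast(\underline{a},x))=f(\sigma(k,x)\cdot\underline{a},\beta(kx)kx)=f(\underline{a},\beta(x)x)=(\mathrm{ext}\,f)(\underline{a},x).
\]

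Next I would show that $j^{n}_A$ and $\mathrm{ext}$ are mutually inverse. Since $\beta(y)=e$ for $y\in Y$, we have $(\mathrm{ext}\,f)|_{A^{n+1}\times Y}=f$, so $j^{n}_A\circ \mathrm{ext}=\id$. For the other direction, take a pointwise $G$-invariant Borel representative $F$ of a class in $\Linf(A^{n+1}_\sigma\times X)^{G}$ and apply $G$-invariance with $k=\beta(x)$; since $\sigma(\beta(x),x)=e$, this gives
\[
F(\underline{a},x)=F(\beta(x)\ast(\underline{a},x))=F(\sigma(\beta(x),x)\cdot\underline{a},\beta(x)x)=F(\underline{a},\beta(x)x)=(\mathrm{ext}\circ j^{n}_A)(F)(\underline{a},x).
\]
Finally, the map $x\mapsto \beta(x)x$ is surjective onto $Y$ (it is the identity on $Y$), so $\|\mathrm{ext}\,f\|_{\infty}=\|f\|_{\infty}$, and $\|j^{n}_A(F)\|_{\infty}\leq \|F\|_{\infty}$ is trivial; combined with bijectivity this yields the claimed isometry.

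The main obstacle is measure-theoretic hygiene. Two points need care. First, the ``restriction'' map $j^{n}_A$ takes values on the $\mu$-null set $A^{n+1}\times Y$, so one must select a pointwise $G$-invariant Borel representative of each class, which can be done by a standard Fubini/selection argument invoking the partition-of-unity identity $\mu(\phi)=\int_G\int_Y\phi(gy)\rho(g,y)\,d\nu(y)\,dm_G(g)$ from \eqref{POU}. Second, one must verify that both $j^{n}_A$ and $\mathrm{ext}$ descend to maps of equivalence classes: if two invariant Borel representatives $F_1,F_2$ agree $\mu$-a.e.\ on $X$, applying the partition-of-unity identity to $|F_1-F_2|$ and using $G$-invariance with $k=\beta(x)$ forces their restrictions to agree $\nu$-a.e.\ on $Y$, and conversely a $\nu$-null modification of $f$ produces a $\mu$-null modification of $\mathrm{ext}(f)$ via the same identity. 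Once these routine but necessary verifications are done, all formal computations above go through verbatim.
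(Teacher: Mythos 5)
Your algebraic core is correct: the identity $\sigma(\beta(x),x)=e$ and the observation that $(\sigma(k,x),\beta(x)x)\in\cG^{(1)}$ carries $\beta(x)x$ to $\beta(kx)kx$ are exactly the computations underlying the paper's proof, and your map $\mathrm{ext}\,f(\underline{a},x)=f(\underline{a},\beta(x)x)$ is indeed the inverse of restriction. The paper reaches the same formulas by a different packaging: it factors $j^n_A$ through the transfer space $\cG^{(1)}\times_Y X$ with the transfer measure $\mu_\rho$, so that every identification ($\xi^n$, $u^n$, $(\mathrm{Id}\times\pi)^*$) is induced by a measure-class-preserving Borel map between genuine Lebesgue spaces (Corollary \ref{CorEta1}); the point of that detour is precisely to sidestep the two measure-theoretic issues you flag.

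It is on those issues that your write-up has a genuine gap. The null-set transfer you propose does not work as stated: applying \eqref{POU} to $|F_1-F_2|$ and using invariance only yields that the restrictions agree $\nu$-a.e.\ on the set $\{y\in Y:\int_G\rho(g,y)\,dm_G(g)>0\}$, and this fiber integral can vanish on a subset of $Y$ of positive $\nu$-measure --- for instance, if $Y$ properly contains a smaller cross-section $Y_1$ and the injective cover of \S\ref{CFPOU} is chosen inside $G\times Y_1$, then $\rho\equiv 0$ on $G\times(Y\setminus Y_1)$. The correct tool is the refined Campbell formula (Proposition \ref{Campbell}) with a strictly positive density $w$: for a strictly invariant representative the bad set $N\subset X$ is $G$-saturated and $\mu$-null, whence $\nu(N\cap Y)=\int_X\sum_{g\in Y_x}1_{N\cap Y}(gx)\,w(g)\,d\mu(x)=0$. (The same formula, or your argument combined with $\cG$-invariance of $\nu$, does handle the easier converse direction, i.e.\ well-definedness of $\mathrm{ext}$ on classes.) Second, your substitution $k=\beta(x)$ requires a \emph{pointwise} invariant Borel representative, not merely an a.e.-invariant one; Fubini together with \eqref{POU} cannot produce this, since the graph $\{(\beta(x),x)\}$ is $m_G\otimes\mu$-null. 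The existence of strictly invariant representatives for measurable actions of lcsc groups is a classical but nontrivial fact which you must cite or prove, or else avoid by working on $A^{n+1}\times G\times Y$ as the paper does. (Also, pointwise surjectivity of $x\mapsto\beta(x)x$ onto $Y$ does not by itself control essential suprema; isometry should be deduced from bijectivity plus both maps being norm-nonincreasing, which again uses the null-set correspondence.) With these repairs your direct construction of the inverse goes through and gives a legitimately more economical alternative to the paper's transfer-space argument; as written, the key measure-theoretic step would fail.
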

\begin{remark} Since all measure classes involved in Theorem \ref{BanachIso} are the canonical ones (given $\mu$), we will often drop them from notation and simply write
\[
j^{n}_A: \Linf(A^{n+1}_\sigma \times X)^G \to  \Linf(A^{n+1} \times Y)^{\cG}.
\]
We emphasize that the map $j^n_A$ is only defined on the level of invariants, i.e. there is no well-defined map  restriction map $\Linf(A^{n+1} \times X) \to \Linf(A^{n+1} \times Y)$.
\end{remark}
\begin{no} The transfer isomorphism $j^n_A$ involves the twisted $G$-space $A^{n+1}_\sigma \times X$. One can also construct a transfer isomorphism into the untwisted $G$-space $A^{n+1} \times X$, by using the twisting isomorphism
  \begin{equation}
  \varrho^A_n:(A^{n+1}\times X,\tau^{\otimes n+1}\otimes \mu)\to (A_{\sigma}^{n+1}\times X,\tau^{\otimes n+1}\otimes \mu), \quad (\underline{a},x) \mapsto (\beta(x)\cdot \underline{a},x)
\end{equation}
and its inverse as given by $(\varrho^A_n)^{-1}(\underline{a},x) = (\beta(x)^{-1} \cdot \underline{a}, x)$. Note that $\varrho^A_n$ is measure-class preserving; it is also $G$-equivariant, since
\[
    \varrho^A_n(k\cdot \underline{a},kx)=(\beta(kx)k\cdot \underline{a},kx)=(\sigma(k,x)\beta(x)\cdot \underline{a},kx)= k\ast \varrho^A_n(\underline{a},x)\,,
\]
and hence induces an isomorphism 
\begin{equation}
\varrho_A^n:\Linf(A_{\sigma}^{n+1}\times X,\tau^{\otimes n+1}\otimes \mu)^G \to \Linf(A^{n+1}\times X,\tau^{\otimes n+1}\otimes \mu)^G.
\end{equation}
We then obtain an isomorphism
\begin{equation}
i^n_A \coloneqq j^n_A\circ (\varrho^n_A)^{-1}: \Linf(A^{n+1} \times X, \tau^{\otimes n+1}\otimes \mu)^G \to \Linf(A^{n+1} \times Y, \tau^{\otimes n+1}\otimes \nu)^{\cG}.
\end{equation}
Explicitly, if $[f] \in  \Linf(A^{n+1} \times X, \tau^{\otimes n+1}\otimes \mu)^G$, then $i^n_A([f])$ is represented by
\[
h(\underline{a}, y) =  f|_{A^{n+1} \times Y}(\beta(y)^{-1} \cdot\underline{a}, y)
\]
However, by our normalization \eqref{BetaNormalization} we have $\beta(y) = e$, hence actually $h(\underline{a}, y) =  f|_{A^{n+1} \times Y}(\underline{a}, y)$, and hence $i^n_A$ is given by the same formula as $j^n_A$!
\end{no}

\begin{no}\label{BasicIsoBanach}
The remainder of this section is devoted to the proof of Theorem \ref{BanachIso}. By Corollary \ref{CorEta1} the isomorphisms $\eta_n$ and $\xi_n$ from Corollary \ref{IncludeS} are measure class preserving, hence yield mutually inverse $(\cG \times G)$-equivariant isometric isomorphisms
\[
\eta^{n}:  \Linf(A^{n+1}\times G\times Y , \tau^{\otimes n+1} \otimes m_G \otimes \nu) \to \Linf(A^{n+1}\times \mathcal{G}^{(1)}\times_Y X, \tau^{\otimes n+1} \otimes {\mu}_\rho)
\]
and 
\[
\xi^{n}: \Linf(A^{n+1}\times \mathcal{G}^{(1)}\times_Y X, \tau^{\otimes n+1} \otimes {\mu}_\rho)  \to \Linf(A^{n+1}\times G\times Y , \tau^{\otimes n+1} \otimes m_G \otimes \nu),
\]
which on the level of representatives are induced by $f \mapsto f \circ \eta_n$ and $g \mapsto g \circ \xi_n$ respectively. In particular, $\xi_n$ restrict to an isometric isomorphism
\[
\xi^{n}: \Linf(A^{n+1}\times \mathcal{G}^{(1)}\times_Y X, \tau^{\otimes n+1} \otimes {\mu}_\rho)^{\cG \times G}  \to \Linf(A^{n+1}\times G\times Y , \tau^{\otimes n+1} \otimes m_G \otimes \nu)^{\cG \times G}.
\]
We will deduce Theorem \ref{BanachIso} by identifying the domain and codomain of this isomorphism.
\end{no}
 \begin{no}\label{Isou} Denote by  $u_n: A^{n+1} \times G \times Y \to A^n \times Y$ the canonical projection; this is measure class preserving and intertwines the $\cG$-actions induced from the respective diagonal $G$-actions. As a consequence, it induces an isometric isomorphism 
  \begin{gather*}
  u^n:\Linf(A^{n+1} \times Y,\tau^{\otimes n+1} \otimes \nu)^{\cG} \to \Linf(A^{n+1}\times G\times Y,\tau^{\otimes n+1} \otimes m_G\otimes \nu)^{\cG \times G}
\end{gather*}
onto the domain of $\xi^n$. By definition, we have $u^n([f]) = [h]$ if and only if $h(\underline{a}, g, y) = f(\underline{a}, y)$ holds for $(\tau^{\otimes n+1} \otimes m_G\otimes \nu)$-almost all $(\underline{a}, g, y)$.
\end{no}
\begin{no}\label{RHS1} We now consider the codomain of $\eta^n$. For this we make the following elementary observations concerning the projection $\pi:  \mathcal{G}^{(1)}\times_Y X \to X$. Firstly, the projection is onto, since $[e,x] \in  \mathcal{G}^{(1)}\times_Y X$ for every $x \in X$ by \eqref{FiberProdRed}. Secondly, the action of $\cG$ on $\cG^{(1)} \times_Y X$ preserves the fibers of $\pi$. Finally, the $\cG$-action is transitive on fibers.  Indeed, if $[h_1, x], [h_2, x] \in \mathcal{G}^{(1)}\times_Y X$, then both $y_1 \coloneqq  h_1\beta(x) x$ and $y_2 \coloneqq  h_2\beta(x)  x$ lie in $Y$. If we set $g \coloneqq  h_2h_1^{-1}$, then
$(g, y_1) \in \cG$ with $(g, y_1)\cdot[h_1, x] = [h_2, x]$. To summarize, $\pi$ induces a bijection $\overline{\pi}: \cG \backslash (\cG^{(1)} \times_Y X) \to X$.

We further observe that the map $\mathrm{Id}_{A^{n+1}} \times \pi: A^{n+1} \times \mathcal{G}^{(1)}\times_Y X \to A_{\sigma}^{n+1} \times X$ is $G$-equivariant. It is also measure class preserving by Lemma \ref{PushRhoAway}, hence induces an isometric isomorphism
\[
(\mathrm{Id}_{A^{n+1}} \times \pi)^*: \Linf(A^{n+1}_\sigma \times X, \tau^{\otimes n+1}\otimes \mu)^G \to \Linf(A^{n+1} \times \mathcal{G}^{(1)}\times_Y X,  \tau^{\otimes n+1}\otimes \mu_{\rho})^{G \times \cG}.
\]
\end{no}
\begin{proof}[Proof of Theorem \ref{BanachIso}] The isometric isomorphisms $\xi^n$, $u^n$ and $(\mathrm{Id}_{A^{n+1}} \times \pi)^*$ from \S\ref{BasicIsoBanach}, \S\ref{Isou} and \S\ref{RHS1} determine an isometric isomorphism $j^n_A$ making the following diagram commute:
\[\begin{xy}\xymatrix{
\Linf(A^{n+1} \times G \times Y)^{\cG \times G} &&& \ar[lll]_{\xi^n} \Linf(A^{n+1} \times \cG^{(1)} \times_Y X)^{\cG \times_Y G}\\
\Linf(A^{n+1} \times Y)^{\cG} \ar[u]^{u^n} &&& \Linf(A_{\sigma}^{n+1} \times X)^G \ar[lll]_{j^n_A} \ar[u]_{(\mathrm{Id}_{A^{n+1}} \times \pi)^*}.
}\end{xy}\]
It remains to show that $j_A^n$ is given by restriction on the level of representatives. Let $\alpha \in \Linf(A_{\sigma}^{n+1} \times X)^G$ be represented by $f: A_{\sigma}^{n+1} \times X \to \bR$. Then $\xi^n((\mathrm{Id}_{A^{n+1}} \times \pi)^*\alpha)$ is represented by the Borel function $h$ given by
\[
h(\underline{a}, \gamma, y) = f((\mathrm{Id}_{A^{n+1}} \times \pi)(\beta(\gamma y)\gamma \cdot \underline{a},[\gamma^{-1}\beta(\gamma y)^{-1},\gamma y]))= f(\beta(\gamma y)\gamma \cdot \underline{a}, \gamma y).
\]
Since $\alpha$ is $G$-invariant, the function $h$ agrees almost everywhere with the function $h'$ given by $h'(\underline{a}, \gamma, y) \coloneqq h(\underline{a}, e, y)$; now
\[
h'(\underline{a}, \gamma, y) = f( \underline{a}, y) = f|_{A^{n+1} \times Y}(u_n(\underline{a}, \gamma, y)) =  u_n^*(f|_{A^{n+1} \times Y})(\underline{a}, \gamma, y).
\]
Since $u^n$ is an isomorphism, this shows that $j^n_A(\alpha)$ is represented by $f|_{A^{n+1} \times Y}$.
\end{proof}

\section{Bounded cohomology via amenable resolutions}\label{sec:amenable}

\subsection{Amenability}\label{sec:6.1}
\begin{no} By a \emph{Banach space with unit} we mean a Banach space $E$  with a distinguished element $1$. A \emph{unital morphism} of Banach spaces with units is a bounded linear map which maps the unit to the unit; if we consider $\bR$ as a Banach space with Euclidean norm and unit $1$, then a unital morphism $E \to \bR$ is called a \emph{mean} on $E$. If $(A, \tau)$ is a Lebesgue space, then we will always consider $\Linf(A,\tau)$ as a Banach space with unit $1_A$; we then recover the classical definition of a mean on $\Linf(A, \tau)$. 

If $\cE$ and $\cF$ are measurable bundles of Banach spaces with unit over a Lebesgue space $(\Omega,\tau)$, then a morphism $\cE \to \cF$ is \emph{unital} if it maps units to units in each fiber. A unital morphism $\L m: \cE \to \underline{\bR}$ is called a \emph{measurable system of means} for $\cE$. Note that we can pullback such systems along unital morphisms.

If $(\cG, \nu, \lambda)$ is a measured groupoid and $\cE$ and $\cF$ are measurable $(\cG, \nu, \lambda)$-bundles with units, then we can define unital $\cG$-morphisms $\cE \to \cF$ in the obvious way. We refer to a unital $\cG$-morphism $\L m: \cE \to \bR$ as an \emph{invariant system of means} for $\cE$.
\end{no}
From now on $(\cG, \nu, \lambda)$ denotes a measured groupoid. We write $\cL_\cG \coloneqq \cL^1$ for its first tautological bundle as in Example \ref{BEBF}.
\begin{defn}\label{DefAmenable}
The measured groupoid $(\cG, \nu, \lambda)$ is \emph{amenable} if its first tautological bundle $\cL_{\cG}$ admits an invariant system of means. A  Lebesgue $(\mathcal{G},\lambda)$-space $(A,\tau)$ is \emph{amenable} if the right action groupoid $(A \rtimes \cG, \tau, \lambda_A)$ is amenable.
\end{defn}
\begin{remark}
Definition \ref{DefAmenable} is one of many equivalent definitions of an amenable groupoid discussed in the classical book \cite{delaroche:renault:libro} by Anantharaman-Delaroche and Renault. To make the definition more explicit we recall from \eqref{fy} the isomorphism
\[
\Linf(\mathcal{G},\nu\circ \lambda)\to \Linf(\mathcal{G}^{(0)},\mathcal{L}_{\mathcal{G}})\,,\quad F \mapsto (F_y)_{y \in \cG^{(0)}},
\]
Then a system of means $\{\L m^y:\Linf(\mathcal{G}^y,\lambda^y)\to \mathbb{R}\}_{y\in \mathcal{G}^{(0)}}$ for $\cL_G$ is \emph{measurable} if for every $F \in \Linf(\mathcal{G},\nu\circ \lambda)$ the function $y \mapsto \L m^y(F_y)$ is $\nu$-measurable; it is \emph{invariant} if for every such $F$ and every $g\in \mathcal{G}^{(1)}$ we have
  \begin{equation}\label{eq:ISM}
    \L m^{s(g)} (g^{-1}\cdot F_{t(g)}) = \L m^{t(g)}(F_{t(g)}).
 \end{equation}
\end{remark}
\begin{example}\label{ExAmenable} We now spell out Definition \ref{DefAmenable} in various situations of interest.

\item (i) If $G$ is a lcsc group, then $\underline{G}$ is amenable iff there exists a $G$-invariant mean $\L m: \Linf(G,m_G) \to \bR$; this recovers the usual definition for lcsc groups.

\item (ii) Let $G$ be a lcsc group and let $(A, \tau)$ be a Lebesgue $G$-space. Given a function class $[F] \in  \Linf(A \times G,m_G\otimes \tau)$ we set $F_a(g) \coloneqq  F(a,g)$ so that $[F_a] \in \Linf(G,m_G)$. Then $(A, \tau)$ is amenable if and only if there exists a system of means $\{\L m^a:\Linf(G,m_G)\to \mathbb{R}\}_{a\in A}$ such that for every $F \in \Linf(A \times G,m_G\otimes \tau)$ the map $a\mapsto \L m^a(F_a)$ is measurable and 
\[
\L m^{g^{-1}\cdot a}(g^{-1}\cdot F_a)=\L m^a(F_a) \; \text{for $\tau$-almost every $a\in A$ and every $g\in G$.}
\]
It turns out that this condition is equivalent to amenability of the $G$-action on $(A, \tau)$ in the sense of Zimmer \cite{delaroche:renault:libro}.

\item (iii)  More generally, let $(\cG, \nu, \lambda)$ be a measured groupoid and let $(A,\tau)$ be a Lebesgue $(\mathcal{G},\lambda)$-space. We denote $\cR \coloneqq A \rtimes \cG$ and observe that $F \in \Linf(\cR^{(1)}, \tau \circ \lambda_A)$ corresponds to a family $(F_a)_{a \in A}$ with $F_a \in \Linf(\cG^{t_A(a)}, \lambda^{a})$ with $F_a(y) = F(a,g)$. Thus an invariant system of means is given by a family  $\{\L m^{a}:\Linf(\cG^{t_A(a)}, \lambda^{a})\to \mathbb{R}\}_{a\in A}$ such that for every $F \in \Linf(\cR^{(1)}, \tau \circ \lambda_A)$ the map $a\mapsto  \L m^a(F_a)$ is $\tau$-measurable and for $\tau$-almost every $a\in A$ we have
  \begin{equation}\label{eq:ISM:action}
    \L m^{g^{-1}\cdot a} (g^{-1}\cdot F_{ a}) = \L m^{ a}(F_a) \; \text{ for every $g\in\mathcal{G}$.}
   \end{equation}

\item (iv) We now specialize to our main case of interest: $G$ is a lcsc unimodular group, $(X, \mu, Y)$ is an integrable transverse $G$-system and $(\cG, \nu)$ is the associated transverse system. Now let $(A, \tau)$ be a Lebesgue $G$-space; we consider the induced $\cG$-space $(A \times Y, \tau \otimes \nu)$ and the corresponding right action groupoid $\cR \coloneqq (A \times Y) \rtimes \cG$. As in (ii) a bounded measurable functions on $\cR^{(1)}$ corresponds to a family $(F_{a,y})_{a,y \in A \times Y}$ with $F_{a,y} \in \Linf(\cG^y, \lambda^y)$ such that $F_{a,y}(g) = F(a,y,g)$, and an invariant system of means for $(A \times Y, \tau \otimes \nu)$ is given by a system of means $\{\L m^{a,y}:\Linf(\mathcal{G}^{y},\lambda^y)\to \mathbb{R}\}_{(a,y)\in A\times Y}$ such that for every $F \in \Linf(\cR^{(1)}, \tau \circ \lambda_{A \times Y})$
the map $(a,y) \mapsto  \L m^{a,y}(F_{a,y})$ is $\tau \otimes \nu$-measurable and 
  \begin{equation}\label{eq:ISM:action:AxY}
    \L m^{g^{-1}\cdot a,g^{-1}y} ((g, y)^{-1}\cdot F_{a,y}) = \L m^{ a,y}(F_{a,y}) 
  \end{equation}
  holds for $\tau\otimes \nu$-almost every $(a,y)\in A\times Y$ and every $g\in\mathcal{G}$.
\end{example}
We can now state the main result of this section:
\begin{thm}\label{InducedAmenable} If $(X, \mu, Y)$ is an integrable transverse system over a unimodular lcsc group $G$ with transverse groupoid $(\cG, \nu)$, then for every amenable $G$-space
$(A,\tau)$ the induced $\cG$-space  $(A\times Y,\tau\otimes \nu)$ is also amenable.
\end{thm}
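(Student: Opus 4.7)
The plan is to construct an invariant measurable system of means $\{\mathfrak{n}^{a,y}\}_{(a,y) \in A \times Y}$ on the first tautological bundle of the right action groupoid $\mathcal{R} = (A \times Y) \rtimes \mathcal{G}$, as described in Example \ref{ExAmenable}(iv), by lifting the given invariant measurable system of means $\{\mathfrak{m}^a\}_{a \in A}$ for $G \curvearrowright (A,\tau)$ through an \emph{evaluation-type} extension operator. Using the parametrization $\mathcal{G}^y = \{(g^{-1}, gy) : g \in Y_y\}$, identify $\ell^\infty(\mathcal{G}^y)$ with $\ell^\infty(Y_y)$. Since $\beta(x) \in Y_x$ for every $x \in X$, the element $\beta(h^{-1}y)\,h^{-1}$ lies in $Y_y$ for every $h \in G$ and $y \in Y$, which allows me to define a Borel extension operator
\[
E_y : \ell^\infty(Y_y) \longrightarrow \Linf(G, m_G), \qquad E_y(\phi)(h) := \phi\bigl(\beta(h^{-1}y)\,h^{-1}\bigr),
\]
and then to set $\mathfrak{n}^{a,y}(\phi) := \mathfrak{m}^a(E_y(\phi))$. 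Since $E_y$ is unital, positive, linear and contracting, each $\mathfrak{n}^{a,y}$ is indeed a mean.

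\textbf{The key identity and invariance.} Thanks to the crucial normalization $\beta|_Y = e$, one has the consistency $E_y(\phi)(g^{-1}) = \phi(g)$ for every $g \in Y_y$ (since $gy \in Y$ forces $\beta(gy) = e$). The heart of the proof is the equivariance identity
\[
E_{y'}\bigl((g, y')^{-1} \cdot F_{a,y}\bigr) \;=\; L_{g^{-1}}\bigl(E_y(F_{a,y})\bigr) \qquad \text{in } \Linf(G),
\]
valid for every $(g,y') \in \mathcal{G}^{(1)}$ with $gy' = y$, where $(L_{g^{-1}} f)(h) := f(gh)$. To prove it, I unwind the $\mathcal{G}$-action from Example \ref{ExAmenable}(iv): in the $Y_{y'}$-parametrization, $(g,y')^{-1} \cdot F_{a,y}$ becomes $k \mapsto F_{a,y}(kg^{-1})$, and then
\[
E_{y'}\bigl((g, y')^{-1} \cdot F_{a,y}\bigr)(h) = F_{a,y}\bigl(\beta(h^{-1}y')\,h^{-1}g^{-1}\bigr) = F_{a,y}\bigl(\beta((gh)^{-1}y)\,(gh)^{-1}\bigr) = E_y(F_{a,y})(gh),
\]
using the tautological identity $h^{-1}g^{-1}y = h^{-1}y'$. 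Combining this with the $G$-invariance of $\mathfrak{m}$ yields
\[
\mathfrak{n}^{g^{-1}a,\, y'}\bigl((g, y')^{-1} \cdot F_{a,y}\bigr) = \mathfrak{m}^{g^{-1}a}\bigl(L_{g^{-1}} E_y(F_{a,y})\bigr) = \mathfrak{m}^a\bigl(E_y(F_{a,y})\bigr) = \mathfrak{n}^{a,y}(F_{a,y}),
\]
which is the required $\mathcal{G}$-invariance.

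\textbf{Measurability and the main obstacle.} For $F \in \Linf(\mathcal{R}^{(1)})$ the function $H(a,y,h) := E_y(F_{a,y})(h)$ is bounded Borel on $A \times Y \times G$ as a composition of Borel maps, and the measurability of $(a,y) \mapsto \mathfrak{m}^a(H(a,y,\cdot))$ follows from the measurability of $\mathfrak{m}$ by a routine Fubini/monotone-class approximation treating $y$ as a parameter. The delicate step --- and conceptual heart of the argument --- is the choice of extension operator. The naive candidate $\tilde\phi(h) := \sum_{\ell \in Y_y}\phi(\ell)\,\rho(h\ell^{-1},\ell y)$, built from a Borel partition of unity $\rho$ as in \S \ref{CFPOU}, gives a measurable system of means but \emph{fails} $\mathcal{G}$-invariance, because the partition weights do not translate correctly under the $\mathcal{G}$-action. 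Evaluation at the specific point $\beta(h^{-1}y)\,h^{-1}$ succeeds precisely because the normalization $\beta|_Y = e$ together with $\beta((gh)^{-1}y) = \beta(h^{-1}y')$ converts the $\mathcal{G}$-action into ordinary left-translation in $G$, exactly matching the form of the $G$-invariance hypothesis on $\mathfrak{m}$.
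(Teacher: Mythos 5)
Your proposal is correct and is essentially the paper's own argument in different notation: your evaluation operator $E_y(\phi)(h)=\phi\bigl(\beta(h^{-1}y)h^{-1}\bigr)$ is exactly the paper's pullback $\kappa_y^*$ along $\kappa_y(h)=(\sigma(h^{-1},y),y)^{-1}$ written in the $Y_y$-parametrization of $\cG^y$ (using $\beta|_Y\equiv e$), and composing with the constant family $\{\L m^a\}$ is precisely the paper's pullback $K^*\L m$ of the invariant system of means on the auxiliary constant bundle $\cE_G$, with your translation identity $\beta((gh)^{-1}y)(gh)^{-1}=\beta(h^{-1}y')h^{-1}g^{-1}$ playing the role of the paper's cocycle computation. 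The measurability of $(a,y)\mapsto \L m^a(E_y(F_{a,y}))$ is asserted in the paper with no more detail than you give, so your treatment matches the paper's standard there as well.
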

\begin{proof} Assume that $(A, \tau)$ is an amenable $G$-space and fix an invariant system of means $\{\L m^a:\Linf(G,m_G)\to \mathbb{R}\}_{a\in A}$ as in Example \ref{ExAmenable}(ii).
Our goal is to construct an invariant system of measures for the tautological bundle $\cL_\cR$ over the right action groupoid $\mathcal{R}\coloneqq(A\times Y)\rtimes \mathcal{G}$.

We first construct an auxiliary bundle over $\cR$. Observe that, similar to Construction \ref{PullbackModules}, we can turn $\Linf(G)$ into a Banach-$\cR$-module via
\[
(a, y, g)\cdot f(h)\coloneqq g\cdot f(h)=f(g^{-1} h) \quad ((a,y, g) \in \cR, h \in G).
\]
We then denote by $\cE_G$ the associated constant bundle. We will first construct an invariant system of means for $\cE_G$ and then pull it back to an invariant system for $\cL_{\cR}$ via an equivariant bundle map $\cL_\cR \to \cE_G$.

If for every $(a,y) \in A \times Y$ we set $\L m^{a,y} \coloneqq \L m^a$, then for every $F \in  \Linf(A\times Y\times G)$ the map $(a,y)\mapsto \L m^{a,y}(F_{a,y})$ is $\tau\otimes \nu$-measurable; we thus obtain a measurable family $\L m = \{\L m^{a,y}:\Linf(G,m_G)\to \mathbb{R}\}_{(a,y)\in A\times Y}$ of means for the bundle $\cE_G$. Moreover, for every $F \in  \Linf(A\times Y\times G)$ and all $(a,y,g) \in \cR$ we have
\[
 \L m^{g^{-1}\cdot a,g^{-1}y} (g^{-1}\cdot F_{a,y})  =  \L m^{g^{-1}\cdot a} (g^{-1}\cdot F_{a,y}) =  \L m^a(F_{a,y})=\L m^{ a,y} (F_{a,y}),
\]
hence $\L m$ is invariant. 

We now construct an equivariant bundle map $K = (K_{a,y})_{(a,y) \in A \times Y}: \cL_{\cR} \to \cE_G$ as follows.  We fix a Borel map $\beta: X \to G$ and cocycle $\sigma: G\times X\to \Lambda(Y)$ as in Construction \ref{ConCocycle} and for every $y\in Y$ we define
\[\kappa_y: G\to \mathcal{G}\quad \kappa(g)=(\sigma(g^{-1},y),y)^{-1}\,.\] 
Since $\lambda^y$ is counting measure on $\cG^y$ we obtain a well-defined map
$$\kappa_y^*: \ell^\infty(\cG^y) = \Linf(\mathcal{G}^y,\lambda^y)\to \Linf(G,m_G)$$
 defined on representatives as $ \kappa_y^*(f)=f\circ \kappa_y$. Since $\sigma$ is measurable we obtain a measurable bundle map by setting $K_{a,y} \coloneqq \kappa_y^*$ for all $(a,y) \in A \times Y$, and it remains to show equivariance. 
 For all $f \in \ell^\infty(\cG^y)$, $(a,y,g) \in \cR$ and $\gamma \in \cR^{g \cdot a, gy} = G$ that
 \[
  K_{g\cdot a,gy}((g,y)\cdot f)(\gamma)  = (g,y) \cdot f(\kappa_{gy}(\gamma)) =  f((g,y)^{-1}(\sigma(\gamma^{-1},g y),gy)^{-1})
 \]
 Now observe that for all $(g,y) \in \cG$ we have
 \[
  \sigma(g,y)=\beta(gy)g\beta(y)^{-1} = g, \quad \text{since }\beta|_Y \equiv e.
 \]
 Combining this with the cocycle identity \eqref{CocycleId} we can rewrite the argument as
\begin{align*}
(g,y)^{-1}(\sigma(\gamma^{-1},g y),gy)^{-1} &= (\sigma(g,y), y)^{-1}(\sigma(\gamma^{-1},g y),gy)^{-1}\\
&= (\sigma(\gamma^{-1},g y)\sigma(g,y),y)^{-1} = (\sigma(\gamma^{-1}g,y),y)^{-1},
\end{align*}
hence we obtain
\[
K_{g\cdot a,gy}((g,y))\cdot f)(\gamma) = f((\sigma(\gamma^{-1}g,y),y)^{-1})= K_{a,y}(f) (g^{-1} \gamma ) = g \cdot K_{a,y} (f) (\gamma).
\]
This shows that $K:  \cL_{\cR} \to \cE_G$ is equivariant, and hence $K^*\L m$ is an invariant system of means for $\cL_{\cR}$.
 \end{proof}

\subsection{Measurable bounded cohomology of transverse measured groupoids}\label{SubsecAmenable}
Throughout this section, $(X, \mu, Y)$ denote an integrable transverse system over a unimodular lcsc group $G$ with transverse groupoid $(\cG, \nu)$.

\begin{con}[Induced amenable resolutions]\label{SRes}\label{ConAm} Let $(A, \tau)$ be an amenable Lebesgue-$G$-space. Then also  $(A^n, \tau^{\otimes n})$ is an amenable Lebesgue $G$-space (\cite[Example 5.4.1.(ii)]{monod:libro}), and hence the induced space $(A^n \times Y, \tau^{\otimes n} \otimes \nu)$ is an amenable $\cG$-space by Theorem \ref{InducedAmenable}. It then follows from \cite[Proposition~4.8]{sarti:savini25} that the associated $\mathcal{G}$-bundles $\mathcal{L}_{A^n\times Y}$ as defined in Construction \ref{LBundle} are relatively injective and as in \cite[Proposition~4.5]{sarti:savini25} we obtain a strong augmented resolution of $\underline{\bR}$ by setting
\begin{equation}\label{ABundles}
0 \to \underline{\bR} \xrightarrow{d^{-1}_A} \cL_{A \times Y} \xrightarrow{d^0_A} \cL_{A^2 \times Y} \xrightarrow{d^1_A} \cL_{A^3 \times Y} \xrightarrow{d^2_A} \dots,
\end{equation}
where fiberwise $d^{-1}_A$ is given by the inclusion of constants and $d^n_A$ is given by the alternating sum of the dual face maps of the simplicial space $A^{\bullet+1}$. It then follows from \S\ref{FunctorialCharacterization} that this resolution can be used to compute the measurable bounded cohomology of $(\cG, \nu)$. We thus obtain an isometric isomorphism
\begin{equation*}
(\cG \to A)^\bullet: \Hmb^\bullet((\cG, \nu); \underline{\bR}) \cong \Hm^\bullet\left(0 \to \Linf(A \times Y)^{\cG} \xrightarrow{d^0_A} \Linf(A^2 \times Y)^{\cG} \xrightarrow{d^1_A} \dots  \right)
\end{equation*}
which is induced by any equivariant chain map between the resolutions \eqref{StdRes} and \eqref{ABundles}.
\end{con}
\begin{con}[Functoriality]\label{Functoriality1}
We observe that if $(A, \tau)$ and $(B, \theta)$ are two amenable $G$-spaces, then we obtain canonical isomorphisms
\[
(A \to B)^k: \Hm^k(\Linf(A^{\bullet + 1} \times Y)^{\cG}, d_A^\bullet) \to \Hm^k(\Linf(B^{\bullet + 1} \times Y)^{\cG}, d_B^\bullet)
\]
such that the following diagram commutes:
\begin{equation}\label{CommutativeAmenable}
\begin{tikzcd}
 &&&  \Hm^k(\Linf(A^{\bullet+1} \times Y)^{\cG}) \arrow{dd}{(A \to B)^k}\\
\Hmb^k((\cG, \nu);\underline{\bR})\arrow{rrru} {(\cG \to A)^k} \arrow{rrrd}[swap]{(\cG \to B)^k}\\
 &&& \Hm^k(\Linf(B^{\bullet+1} \times Y)^{\cG}).
\end{tikzcd}
\end{equation}
Indeed, these isomorphisms can again be implemented by any equivariant chain map (compatible with augmentation) between the corresponding augmented bundle resolutions.
\end{con}
\subsection{The isomorphism theorem}\label{sec:6.2}
We now turn to the proof of our main theorem, Theorem \ref{Induction} from the introduction, which extends Corollary \ref{MainThmDiscrete} to the non-discrete case (and removes the ergodicity hypothesis). As in the theorem, we denote by $(\mathcal{G}, \nu)$ the transverse measured groupoid associated with an integrable transverse system $(X, \mu, Y)$ over a unimodular lcsc group $G$. We also fix a Borel map $\beta: X \to G$ and cocycle $\sigma: G\times X\to \Lambda(Y)$ as in Construction \ref{ConCocycle}. Our goal is to construct isometric isomorphisms
\begin{equation}
t^\bullet: \Hcb^\bullet(G; \Linf (X, \mu)) \to \Hmb^\bullet((\cG,\nu); \underline{\bR}).
\end{equation}
We are going to construct such isomorphisms using induced amenable resolutions as in Construction \ref{ConAm}. Moreover, it will be clear from our construction that these isomorphisms do not depend on the choice of amenable $G$-space which is used as an input and enjoy a certain form of functoriality. Our proof is modelled on the periodic case (Example \ref{LatticeCase}) and uses the same strategy as Monod in \cite[Lemma 5.4.3]{monod:libro}. 
\begin{no}\label{CbcFunct} By definition, the continuous bounded cohomology $\Hcb^k(G; \Linf(X, \mu))$ is defined as the cohomology of the complex
\[
0 \to \Linf(G \times X)^G \to \Linf(G^2 \times X)^G \to \dots
\] 
Alternatively, if $(A, \tau)$ is an amenable Lebesgue $G$-space, then it can be computed as the cohomology of the complex
\[
0 \to \Linf(A \times X)^G \to \Linf(A^2 \times X)^G \to \dots
\]
Moreover, this model of continuous bounded cohomology is natural in $(A, \tau)$ in the following sense: If $(A, \tau)$ and $(B, \theta)$ are amenable Lebesgue $G$-spaces, then there exists a $G$-equivariant chain map $i^\bullet_{A \to B}: \Linf(A^{\bullet+1} \times X) \to \Linf(B^{\bullet+1} \times X)$ which is compatible with augmentation, and hence induces isometric isomorphisms
\[
(A \to B)^k: \Hm^k(\Linf(A^{\bullet+1} \times X)^G) \to \Hm^k(\Linf(B^{\bullet+1} \times X)^G),
\]
which are independent of the choice of chain map $i^\bullet_{A \to B}$. Note that the chain map $i^\bullet_{A \to B}$ is not only equivariant with respect to the diagonal $G$-action, but also for the twisted $G$-action as introduced in \S\ref{TwistedAction}. It thus also induces isometric isomorphisms 
\[
(A_\sigma \to B_\sigma)^k: \Hm^k(\Linf(A_\sigma^{\bullet+1} \times X)^G) \to \Hm^k(\Linf(B_\sigma^{\bullet+1} \times X)^G)
\]
such that the following diagram commutes:
\begin{equation}\label{comm3}
  \begin{tikzcd}
  \Hm^k(\Linf(A^{\bullet+1}\times X)^{G})   \arrow{dd}[swap]{\Hm((\varrho_A^k)^{-1})} \arrow{rr}{(A\to B)^k}& &\Hm^k(\Linf(B^{\bullet+1} \times X)^{G}) \arrow[dd,"\Hm((\varrho_B^k)^{-1})"] \\
& & \\
\Hm^k(\Linf(A_{\sigma}^{\bullet+1}\times X)^{G}) \arrow{rr}{(A_{\sigma}\to B_{\sigma})^k}& &\Hm^k(\Linf(B_{\sigma}^{\bullet+1} \times X)^{G})"] \\
&&
\end{tikzcd}
\end{equation}
In particular, these isomorphisms are again independent of the choice of chain map. Everything we said so far applies in particular to the case $A=G$, and naturality means that we obtain a commutative triangle
\begin{equation}\label{CommAm2}
\begin{tikzcd}[scale cd=0.9,sep=small]
 &&&  \Hm^k(\Linf(A^{\bullet+1} \times X)^G) \arrow{dd}{(A \to B)^k}\\
 \Hcb^k(G;\Linf(X)) = \Hm^k(\Linf(G^{\bullet+1} \times X)^G)\arrow{rrru} {(G \to A)^k} \arrow{rrrd}[swap]{(G \to B)^k}\\
 &&& \Hm^k(\Linf(B^{\bullet+1} \times X)^G).
\end{tikzcd}
\end{equation}
\end{no}
\begin{con}[Induction isomorphism]\label{IndIso} If $A$ is an amenable $G$-space, then by Theorem \ref{BanachIso} we have an isometric transfer isomorphism $i^{k}_A: \Linf(A^{k+1} \times X)^G \to  \Linf(A^{k+1} \times Y)^{\cG}$ defined as a composition 
  $i^{k}_A= j^n_A\circ (\varrho^k_A)^{-1}$ where $j^n_A$ is
  given by restriction of representatives. On the other hand,  by Construction \ref{ConAm} we have an isometric isomorphism $(A \to \cG)^k: \Hm^k( \Linf(A^{\bullet+1} \times Y)^{\cG}) \to \Hmb^k((\cG, \nu);\underline{\bR})$. If $(B, \tau)$ is another amenable $G$-space, then by Construction \ref{ConAm} we have an isomorphism \[(A \to B)^k: \Hm^k( \Linf(A^{\bullet+1} \times Y)^{\cG}) \to \Hm^k( \Linf(B^{\bullet+1} \times Y)^{\cG})\,.\]
Combining the commutative diagrams \eqref{CommutativeAmenable}, \eqref{comm3} and \eqref{CommAm2} and using the explicit form of the transfer isomorphisms, we obtain for every $k \geq 0$ the following commutative diagram of isometric isomorphisms:
\begin{equation}\label{MainDiagram}
  \begin{tikzcd}[scale cd=0.9,sep=small]
    &  \Hcb^k(G;\Linf(X)) \arrow{ddl}[swap]{(G \to A)^k} \arrow{ddr}{(G \to B)^k}& \\
    &&\\
\Hm^k(\Linf(A^{\bullet+1}\times X)^{G})   \arrow{dd}[swap]{\Hm((\varrho_A^k)^{-1})} \arrow{rr}{(A\to B)^k}& &\Hm^k(\Linf(B^{\bullet+1} \times X)^{G}) \arrow[dd,"\Hm((\varrho_B^k)^{-1})"] \\
& & \\
\Hm^k(\Linf(A_{\sigma}^{\bullet+1}\times X)^{G})   \arrow{dd}[swap]{\Hm(j_A^k)} \arrow{rr}{(A_{\sigma}\to B_{\sigma})^k}& &\Hm^k(\Linf(B_{\sigma}^{\bullet+1} \times X)^{G}) \arrow[dd,"\Hm(j_B^k)"] \\
&&\\
\Hm^k(\Linf(A^{\bullet+1}\times Y)^{\mathcal{G}})   \arrow{rdd}[swap]{(A\to \mathcal{G})^k} \arrow{rr}{(A\to B)^k}& &\Hm^k(\Linf(B^{\bullet+1} \times Y)^{\mathcal{G}}) \arrow[ldd,"{(B\to \mathcal{G})^k}"] \\
&&\\
&\Hmb^k((\mathcal{G},\nu);\underline{\mathbb{R}})&
  \end{tikzcd}
\end{equation}

Concatenating these isometric isomorphisms then yields the desired isometric isomorphism \[t^k: \Hcb^k(G;\Linf(X,\mu)) \to  \Hmb^k((\mathcal{G},\nu);\underline{\mathbb{R}}),\] and commutativity of the diagram shows that the isomorphism
is independent of the choice of amenable $G$-space $(A, \tau)$ which was  used to define it.
\end{con}
This concludes the proof of Theorem \ref{Induction}. 
 \begin{remark}
 One would like to also adapt the proofs of \cite[Proposition 10.1.3 and 10.1.5]{monod:libro} to the present setting to obtain a stronger form of functoriality. This, however, would require a functorial characterization of measurable bounded cohomology for the non-discrete measured groupoid $\mathcal{G}\times G$. For the moment, such a characterization is only available in the case of discrete measured groupoids, hence the above functoriality is the best we can get at this point.
 \end{remark}
 \subsection{An explicit implementation via harmonic cocycles}\label{SecPoisson}
 We keep the notation of the previous section; our goal is to provide an explicit implementation of the induction isomorphism $t^n: \Hcb^n(G; \Linf(X)) \to \Hmb^n((\cG, \nu); \underline{\bR})$ on the cocycle level. For this we are going to need some standard results about Poisson boundaries; a convenient reference is \cite{Furman_RandomWalksOnGroups}.
\begin{no}\label{Poisson1} A probability measure $p$ on $G$ is called \emph{admissible} if some convolution power $p^{\ast n}$ is absolutely continuous with respect to $m_G$ and the support of $p$ generates $G$ as a semigroup. For the remainder of this section we fix an admissible probability measure $p$ and denote by $\cH^\infty(G) = \cH^\infty_p(G)$ the space of bounded measurable functions $f: G \to \bR$ which are \emph{$p$-harmonic} in the sense that 
\[
f(g) = \int_G f(gg')\, dp(g') \quad \text{for all }g\in G.
\]
Any such function is automatically continuous by admissibility of $p$, and there is an amenable Lebesgue $G$-space $(B, \tau) = (B_p, \tau_p)$, called the \emph{Furstenberg-Poisson boundary} of $(G, p)$, such that the \emph{Poisson transform}
\[
\cP: \Linf(B, \tau) \to \cH^\infty(G), \quad \cP f(g) = \int_{B} f(g\xi) \, d\tau(\xi)
\]
is an isometric isomorphism. The construction is compatible with products, i.e. for every $n \in \bN$ we obtain an isometric isomorphisms $\cP^n: \Linf(B^{n}, \tau^{\otimes n}) \to \cH^\infty(G^n)$ by coordinate-wise action and integration. This induces a chain map \[\cP^{\bullet+1}: \Linf(B^{\bullet + 1} \times X, \tau^{\otimes \bullet+1} \otimes \mu)^G \to \Linf(G^{\bullet + 1}\times  X, m_G^{\otimes n+1}\otimes \mu)^G\]
which on the level of cohomology induces the isomorphism $(G \to B)^{\bullet}$ from \S\ref{CbcFunct}. In particular, every bounded cohomology class with coefficient in $L^\infty(X)$ can be represented by a Borel function which is harmonic (and hence continuous) in the $G$-variables.
\end{no}
\begin{con}[Poisson transform as a chain map]\label{Poisson2} As in \S\ref{Poisson1} we fix an admissible probability measure $p$ on $G$ and denote by $(B, \tau)$ the associated Poisson boundary. In order to implement the canonical maps $(B \to \cG)^\bullet$ from Construction \ref{Functoriality1}, we need to construct a family $P^n =  (P^n_y: \Linf(B^{n} \times \{y\},\tau^{\otimes n}\otimes \delta_y ) \to \ell^\infty((\cG^y)^n))_{y \in \cG^{(0)}}$ of $\cG$-bundle morphisms
such that the following diagram commutes:
\[\begin{xy}\xymatrix{
0 \ar[r] & \underline{\bR} \ar[r] \ar[d]^{\mathrm{Id}} & \cL_{B \times Y} \ar[d]^{P^1}\ar[r] & \cL_{B^2 \times Y}\ar[d]^{P^2}\ar[r] & \cL_{B^3 \times Y}\ar[d]^{P^3} \ar[r] &\dots\\
0 \ar[r]& \underline{\bR} \ar[r]& \cL^0\ar[r] & \cL^1\ar[r] & \cL^2 \ar[r] &\dots
}\end{xy}\]
To construct such a chain map $P^\bullet$, we observe that the Poisson transform induces isomorphisms $\cP_y:  L^\infty(B^n \times \{y\},\tau^{\otimes n}\otimes \delta_y) \to \cH^\infty(G^n)$, where $\cP_y f$ is the Poisson transform of the function $(\xi_0, \dots, \xi_{n-1}) \mapsto f(\xi_0, \dots, \xi_{n-1}, y)$. We can compose this with the natural restriction map $\cH^\infty(G^n) \to \ell^\infty((\cG^y))^n$ to obtain the desired chain map. More explicitly, if  $((g_0, y_0), \dots, (g_{n-1}, y_{n-1})) \in (\cG^{(1)})^n$ with $g_0 y_0 = \dots = g_{n-1} y_{n-1} = y$, then $P^n_yf((g_0, y_0), \dots, (g_{n-1}, y_{n-1}))  = \cP f(g_0, \dots, g_{n-1})$.
\end{con}
\begin{prop}\label{ImplementationII} Let $\alpha \in\Hcb^n(G; \Linf(X))$ and $\alpha_{\cG} := t^n(\alpha) \in \Hmb^n((\cG, \nu); \underline{\bR})$. Then
\begin{enumerate}[(i)]
\item $\alpha$ can be represented by a bounded Borel function $f: G^{n+1} \times X \to \bR$ which is harmonic in the $G$-variables.
\item If $f$ is as in (i), then  $\alpha_{\cG}$ is represented by the function $f_{\cG}$ defined as follows: If $((g_0, y_0), \dots, (g_{n}, y_{n})) \in (\cG^{1})^{n+1}$ with $g_0y_0 = \dots = g_{n-1} y_{n-1} = y$, then 
\[f_{\cG}((g_0, y_0), \dots, (g_{n}, y_{n})) = f|_{G^{n+1} \times Y}(g_0, \dots, g_n, y).\]
\end{enumerate}
\end{prop}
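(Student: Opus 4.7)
The plan is to specialize the commutative diagram \eqref{MainDiagram} to the amenable $G$-space $A = B$ (the Furstenberg--Poisson boundary of $(G,p)$) and to track an arbitrary class through the left column, using the Poisson transform to produce explicit cocycle representatives. As recalled in \S\ref{Poisson1}, coordinate-wise Poisson integration extends to a $G$-equivariant chain map $\cP^{\bullet+1}: \Linf(B^{\bullet+1} \times X)^G \to \Linf(G^{\bullet+1} \times X)^G$ which realizes the comparison isomorphism between the two amenable resolutions. Since $\cP^{n+1}$ restricts to an isometric isomorphism from $\Linf(B^{n+1} \times X)^G$ onto the space of $G$-invariant cochains that are $p$-harmonic in each $G$-variable, every class $\alpha \in \Hcb^n(G;\Linf(X))$ admits a harmonic representative $f$, and any such $f$ then equals $\cP^{n+1}(f_B)$ for a unique $f_B \in \Linf(B^{n+1} \times X)^G$. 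This settles (i) and sets up (ii).

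For (ii), I will push such an $f = \cP^{n+1}(f_B)$ down the left column of \eqref{MainDiagram} with $A=B$. First, $(G \to B)^n$ is inverse to the isomorphism induced by $\cP^{\bullet+1}$, so it sends $\alpha$ to $[f_B]$. Next, $\Hm((\varrho_B^n)^{-1})$ replaces this by the class of $(\underline{b}, x) \mapsto f_B(\beta(x)^{-1} \cdot \underline{b}, x)$, and $\Hm(j_B^n)$ restricts to $B^{n+1} \times Y$; invoking the normalization $\beta|_Y \equiv e$ from \eqref{BetaNormalization}, this simplifies cleanly to $[f_B|_{B^{n+1} \times Y}]$.

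The final arrow $(B \to \cG)^n$ is realized on cochains by the chain map $P^\bullet$ constructed in Construction \ref{Poisson2}. By the defining formula for $P^{n+1}_y$, for a $\cG$-tuple $((g_0,y_0),\dots,(g_n,y_n))$ with common target $y$ I obtain
\[
P^{n+1}_y(f_B|_{B^{n+1} \times \{y\}})((g_0,y_0),\dots,(g_n,y_n)) = \int_{B^{n+1}} f_B(g_0 \xi_0, \dots, g_n \xi_n, y)\, d\tau^{\otimes n+1}(\underline{\xi}),
\]
which is precisely $\cP^{n+1}(f_B)(g_0,\dots,g_n,y) = f|_{G^{n+1} \times Y}(g_0,\dots,g_n,y)$. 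This is the asserted formula for $f_\cG$.

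The main technical point is the verification that $P^\bullet$ really implements $(B \to \cG)^\bullet$, i.e.\ that it is a $\cG$-morphism of augmented resolutions. Compatibility with augmentation and with the simplicial differential is immediate, and fiberwise boundedness is clear; the only genuine issue is $\cG$-equivariance, which via the normalization $\beta|_Y \equiv e$ reduces to the cocycle identity \eqref{CocycleId} for $\sigma$. Once this is in hand, commutativity of \eqref{MainDiagram} ensures that the displayed formula for $f_\cG$ depends only on $\alpha$ and not on the particular harmonic representative chosen, completing the proof.
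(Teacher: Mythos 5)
Your proposal is correct and follows essentially the same route as the paper's proof: both obtain the harmonic representative via the Poisson transform from the boundary resolution, trace the class down the $B$-column of diagram \eqref{MainDiagram} through $(\varrho_B^n)^{-1}$ and $j_B^n$ using the normalization $\beta|_Y \equiv e$, and then apply the chain map $P^\bullet$ of Construction \ref{Poisson2} to recover $f|_{G^{n+1}\times Y}$. The only difference is cosmetic (you flag the $\cG$-equivariance of $P^\bullet$ as the technical point to check, whereas the paper takes Construction \ref{Poisson2} as already established), so no substantive gap remains.
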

\begin{proof} (i) This was established in \S\ref{Poisson1}.

\item (ii) We trace $\alpha$ through the diagram \eqref{MainDiagram} and denote by $\alpha', \alpha'', \alpha'''$ the images of $\alpha$ in $\Hm^n(\Linf(B^{\bullet+1} \times X)^{G})$, $\Hm^n(\Linf(B_\sigma^{\bullet+1} \times X)^{G})$ and $\Hm^n(\Linf(B^{\bullet+1} \times Y)^{\cG})$ respectively. If $f$ is as in (i), then by \S\ref{Poisson1} there exists a bounded Borel function $f^+: B^{n+1} \to \bR$ such that $\cP^n([f^+]) = f$, and we fix such a function once and for all; then $\alpha' = [f^+]$ and $\alpha'' = [f^+_\beta]$, where
\[
f^+_{\beta}: B^{n+1} \times X \to \bR, \quad (\xi_0, \dots, \xi_n, x) \mapsto   f^+(\beta(x)^{-1}\cdot \xi_0, \dots, \beta(x)^{-1}\cdot \xi_n, x)
\]
Consequently, $\alpha'''$ is represented by the restriction of $f^+_{\beta}$ to $B^{n+1} \times Y$, which by \eqref{BetaNormalization} is the same as the restriction of $f^+$ to $B^{n+1} \times Y$. Since 
\[\cP^{n+1}(f^+|_{B^{n+1} \times Y}) = (\cP^{n+1} f^+)|_{G^{n+1} \times Y} = f|_{G^{n+1} \times Y}\]
The proposition then follows from Construction \ref{Poisson2}.
\end{proof}

 \section{Towards applications}\label{sec:res:ind}

Throughout this section, $(X, \mu, Y)$ denote an integrable transverse system over a unimodular lcsc group $G$ with transverse groupoid $(\cG, \nu)$.

 \subsection{Restriction and induction}\label{sec:7.1}
 \begin{no}\label{PreResInd}
We denote by $i_c: \bR \to \Linf(X, \mu)$ the inclusion of constants and set
 \[
 \Linf(X, \mu)_0 \coloneqq  \left\{[f] \in \Linf(X, \mu) \mid \int f \, d \mu = 0 \right\}
 \]
 We then have a short exact sequence of coefficient $G$-modules given by
 \begin{equation}\label{SesInt}
 0 \to \bR \xrightarrow{i_c} \Linf(X, \mu) \xrightarrow{p_\mu} \Linf(X, \mu)_0 \to 0,
 \end{equation}
 where
 \[
 p_\mu(f) \coloneqq  f- \int f \, d\mu \cdot 1_X.
 \]
 The short exact sequence \eqref{SesInt} is actually split-exact, since we can define an explicit left-splitting by
 \[
 s_\mu:  \Linf(X, \mu) \to \bR, \quad f \mapsto \int_X f \, d \mu.
 \] 
 This implies that for every $k \geq 0$ the map $(i_c)^k_*:  \Hcb^k(G; \bR) \to  \Hcb^k(G; \Linf(X, \mu))$ is injective and its left-inverse $(s_\mu)^k_*: \Hcb^k(G; \Linf(X, \mu)) \to \Hcb^k(G; \bR)$ is surjective; moreover, we have
 a short exact sequence
 \[
 0 \to \Hcb^k(G; \bR) \xrightarrow{(i_c)^k_*} \Hcb^k(G; \Linf(X, \mu)) \xrightarrow{(p_\mu)^k_*} \Hcb^k(G; \Linf(X, \mu)_0) \to 0.
 \]
 Note that, by definition, the maps $i_c$ and $s_\mu$ are norm-non-increasing, and hence $(i_c)^k_*$ and $(s_\mu)^k_*$ are seminorm-non-increasing for every $k \geq 0$.
 \end{no}
 \begin{defn}\label{def:res} The \emph{restriction} and \emph{induction map} are defined as the linear maps
 \[
 \mathrm{res}^k_{G,\cG}: \Hcb^k(G; \bR) \to \Hmb^k((\cG, \nu); \underline{\bR}) \qand \mathrm{ind}^k_{\cG,G}: \Hmb^k((\cG, \nu); \underline{\bR}) \to  \Hcb^k(G; \bR)
 \]
 given by composing $(i_c)^k_*$ and $(s_\mu)^k_*$ respectively with the isomorphism $t^k$ from Construction \ref{IndIso}.
 \end{defn}
 From \S \ref{PreResInd} we deduce:
 \begin{prop}\label{prop:res:ind}
   For every $k \geq 0$ the map $\mathrm{res}^k_{G,\cG}$ is injective, its left-inverse $\mathrm{ind}^k_{\cG,G}$ is surjective, and both of these maps are seminorm-non-increasing.
 Moreover, there is a short exact sequence
 \[
 0 \to \Hcb^k(G; \bR) \xrightarrow{\mathrm{res}^k_{G, \cG}} \Hmb^k((\cG, \nu); \underline{\bR}) \to  \Hcb^k(G; \Linf(X, \mu)_0) \to 0,
 \]
 hence $\mathrm{res}^k_{G, \cG}$ is an isometric isomorphism if and only if $\Hcb^k(G; \Linf(X, \mu)_0) = 0$. 
 \end{prop}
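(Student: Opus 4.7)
The plan is to deduce the entire statement from a single observation: the short exact sequence \eqref{SesInt} is split not merely as Banach spaces but in the category of Banach $G$-modules, via the averaging map $s_\mu$. The first step is to verify this $G$-equivariance: for $f\in \Linf(X,\mu)$ and $g\in G$,
\[
s_\mu(g\cdot f)=\int_X f(g^{-1}\cdot x)\,d\mu(x)=\int_X f\,d\mu=s_\mu(f),
\]
where the middle equality uses $G$-invariance of $\mu$; since $G$ acts trivially on $\bR$, this shows $s_\mu\colon \Linf(X,\mu)\to\bR$ is a $G$-equivariant left splitting of $i_c$. In particular the $G$-equivariant short exact sequence \eqref{SesInt} is split.

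Next, I would apply the functor $\Hcb^\bullet(G;-)$ to \eqref{SesInt}. Because the splitting is $G$-equivariant, $(s_\mu)^k_\ast$ is a left inverse to $(i_c)^k_\ast$ in every degree, so the connecting maps in the associated long exact sequence vanish and the sequence decomposes into split short exact sequences
\[
0\to \Hcb^k(G;\bR)\xrightarrow{(i_c)^k_\ast}\Hcb^k(G;\Linf(X,\mu))\xrightarrow{(p_\mu)^k_\ast}\Hcb^k(G;\Linf(X,\mu)_0)\to 0.
\]
The norm bounds $\|i_c\|,\|s_\mu\|\le 1$ noted in \S\ref{PreResInd} pass to the cohomological level, so both $(i_c)^k_\ast$ and $(s_\mu)^k_\ast$ are seminorm-non-increasing.

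Finally, postcomposing with the isometric isomorphism $t^k$ from Construction \ref{IndIso}, and recalling that $\mathrm{res}^k_{G,\cG}=t^k\circ(i_c)^k_\ast$ and $\mathrm{ind}^k_{\cG,G}=(s_\mu)^k_\ast\circ(t^k)^{-1}$ by Definition \ref{def:res}, we obtain the desired short exact sequence
\[
0\to \Hcb^k(G;\bR)\xrightarrow{\mathrm{res}^k_{G,\cG}}\Hmb^k((\cG,\nu);\underline{\bR})\to\Hcb^k(G;\Linf(X,\mu)_0)\to 0.
\]
From $s_\mu\circ i_c=\mathrm{id}_{\bR}$ one then reads off $\mathrm{ind}^k_{\cG,G}\circ\mathrm{res}^k_{G,\cG}=\mathrm{id}$, yielding injectivity of the former and surjectivity of the latter; the seminorm bounds survive because $t^k$ is isometric; and exactness of the above sequence makes $\mathrm{res}^k_{G,\cG}$ an isomorphism precisely when the cokernel $\Hcb^k(G;\Linf(X,\mu)_0)$ vanishes, in which case it is automatically isometric as a bijective norm-non-increasing map whose inverse $\mathrm{ind}^k_{\cG,G}$ is also norm-non-increasing.

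There is no serious obstacle: once Theorem \ref{Induction} is available and the $G$-equivariance of $s_\mu$ is observed, the proposition is a formal consequence of split-exactness combined with the isometric character of the induction isomorphism. The only subtlety worth flagging is the use of $G$-equivariance of the splitting to conclude that the long exact sequence degenerates into short exact sequences rather than merely producing a connecting map.
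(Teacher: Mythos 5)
Your argument is correct and is essentially the paper's own: the proposition is deduced from \S\ref{PreResInd} (the $G$-equivariant splitting $s_\mu$ of \eqref{SesInt}, which exists precisely because $\mu$ is $G$-invariant, and the resulting split short exact sequence in $\Hcb^\bullet(G;-)$) composed with the isometric isomorphism $t^k$ of Construction \ref{IndIso}, exactly as in Definition \ref{def:res}. One small caution: you do not need, and should not invoke, a long exact sequence in continuous bounded cohomology (whose existence for a given coefficient sequence is itself a nontrivial matter); since the splitting is $G$-equivariant, \eqref{SesInt} exhibits $\Linf(X,\mu)$ as a direct sum of coefficient $G$-modules $\bR\oplus\Linf(X,\mu)_0$, and applying $\Hcb^k(G;-)$ to this decomposition already yields the split short exact sequence together with the seminorm estimates, after which your concluding isometry argument goes through verbatim.
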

 From Proposition \ref{ImplementationII} we deduce:
 \begin{cor}[Implementation of restriction]\label{ImplementRes} Let $c: G^{n+1} \to \bR$ be a bounded harmonic $G$-invariant cocycle representing a class $\alpha \in \Hcb^n(G; \bR)$. Then $\mathrm{res}(c)$ is represented by the cocycle $c_{\cG}$ given by
 \[
 \pushQED{\qed}
 c_{\cG}((g_0, y_0), \dots, (g_n, y_n)) = c(g_0, \dots, g_n).\qedhere \popQED
 \]
 \end{cor}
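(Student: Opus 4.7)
The proof plan is to trace an element $\alpha \in \Hcb^n(G;\bR)$ through the two steps that constitute the restriction map, namely $\mathrm{res}^n_{G,\cG} = t^n \circ (i_c)^n_*$, and to observe that Proposition \ref{ImplementationII} handles the second step for us, while the first step is essentially a tautology.

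First I would compute a representative of $(i_c)^n_*(\alpha)$ in $\Hcb^n(G; \Linf(X,\mu))$. Since $i_c: \bR \to \Linf(X,\mu)$ is the inclusion of constants and is $G$-equivariant, the chain map it induces on the complexes $\Linf(G^{\bullet+1})^G$ and $\Linf(G^{\bullet+1} \times X)^G$ is simply postcomposition with $i_c$. Thus the cocycle $c: G^{n+1}\to \bR$ is sent to the cochain $\tilde c: G^{n+1} \to \Linf(X,\mu)$ with $\tilde c(g_0, \dots, g_n) = c(g_0, \dots, g_n) \cdot 1_X$, which under the identification $\Linf(G^{n+1}; \Linf(X,\mu)) \cong \Linf(G^{n+1} \times X)$ corresponds to the function
\[
f: G^{n+1} \times X \to \bR, \quad f(g_0, \dots, g_n, x) \coloneqq c(g_0, \dots, g_n).
\]
I would then check that $f$ meets the hypotheses of Proposition \ref{ImplementationII}(i): it is bounded (since $c$ is), Borel (since $c$ is), and harmonic in each of the $G$-variables (this is inherited directly from harmonicity of $c$, as the integral with respect to $p$ in one $G$-variable does not see the $X$-variable). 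Thus $(i_c)^n_*(\alpha)$ is represented by $f$ in the sense of Proposition \ref{ImplementationII}(i).

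Next I would apply Proposition \ref{ImplementationII}(ii) to this $f$. Spelling out the formula given there, $t^n((i_c)^n_*\alpha)$ is represented by
\[
f_\cG\bigl((g_0, y_0), \dots, (g_n, y_n)\bigr) = f|_{G^{n+1}\times Y}(g_0, \dots, g_n, y) = c(g_0, \dots, g_n),
\]
where $y = g_0 y_0 = \dots = g_n y_n$. This is precisely the formula defining $c_\cG$, and by Definition \ref{def:res} the left-hand side equals $\mathrm{res}^n_{G,\cG}(\alpha)$.

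There is essentially no obstacle: the routine book-keeping is to verify that $c_\cG$ is an honest element of $\Linf(\cG^{[n+1]})^\cG$ (boundedness is obvious from boundedness of $c$; $\cG$-invariance follows from $G$-invariance of $c$ since $(h,z)\cdot (g_i, y_i) = (h g_i, y_i)$ affects only the $g_i$'s by left-multiplication; and the cocycle identity for $c_\cG$ is a direct consequence of the cocycle identity for $c$). Since all of this is encoded in Proposition \ref{ImplementationII}, the corollary follows immediately, which is why the author simply cites that proposition with a \qed.
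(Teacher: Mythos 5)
Your proposal is correct and is exactly the argument the paper intends: the paper deduces the corollary directly from Proposition \ref{ImplementationII} (hence the \qed in the statement), and your verification that the constant-in-$X$ extension $f(g_0,\dots,g_n,x)=c(g_0,\dots,g_n)$ is a bounded Borel representative of $(i_c)^n_*\alpha$, harmonic in the $G$-variables, followed by applying part (ii), is precisely that deduction spelled out.
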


\subsection{Proof of corollaries} \label{sec:corollaries}
We conclude by proving all the corollaries stated in the introduction. From now on, $(\cG, \nu)$ denotes the transverse measured groupoid of an integrable transverse system $(X, \mu, Y)$ over a unimodular lcsc group $G$.
\begin{proof}[Proof of Corollary \ref{intro_cor_amenable}] If $G$ is amenable then it is boundedly acyclic for every coefficient $G$-module $E$ \cite[Corollary 7.5.11]{monod:libro}. Applying this for $E=\Linf(X,\mu)$ and using Theorem \ref{Induction}, we obtain
 \[\Hmb^k((\mathcal{G},\nu);\underline{\mathbb{R}})\cong\Hcb^k(G;\Linf(X,\mu))=0\,,\quad k>0\,.\qedhere\]
\end{proof}
\begin{proof}[Proof of Corollary \ref{intro_cor_seminorms}] This is contained in Proposition \ref{prop:res:ind}.
\end{proof}
\begin{proof}[Proof of Corollary \ref{2rk}] We recall that a coefficient $G$-module $E$ is \emph{semi-separable} if admits a dual injection into another coefficient $G$-module which is separable \cite{MonodSarithmetic}. Since $\Linf(X,\mu)$ embeds in $\textrm{L}^2(X,\mu)$ and the injection map is dual, $\Linf(X,\mu)$ is semi-separable and so is $\Linf(X,\mu)_0$. By our ergodicity assumption, the latter has no invariant vectors for the $G$-action, hence \cite[Theorem 1.2]{MonodSarithmetic} applies and yields 
\[\Hcb^k(G;\Linf(X,\mu)_0)=0\,,\quad k<2\rk_{\mathbb{R}}(G)\,.\]
The corollary the follows from Proposition \ref{prop:res:ind}.
\end{proof}
\begin{proof}[Proof of Corollary \ref{intro_cor_2BC}] 
This follows by combining Corollary \ref{2rk} with \cite{BuMo} (for degree $2$) and the main result of \cite{DLCM} (for degree $3$).
\end{proof}
\begin{proof}[Proof of Corollary \ref{intro_cor_Archimedean}]
 By \cite[Theorem A]{Flatmates} we have  $\Hcb^k(G;\mathbb{R})=0$ for all $k>0$, hence we can argue as in the proof of Corollary \ref{2rk} to conclude.
\end{proof}
\begin{proof}[Proof of Corollary \ref{ApproxLatt1}] This is immediate from Theorem \ref{Induction} and the definitions.
\end{proof}
\begin{proof}[Proof of Corollary \ref{ApproxLatt2}] This follows by combining Corollary \ref{intro_cor_2BC} and Corollary \ref{ApproxLatt1}.
\end{proof}
\begin{proof}[Proof of Corollary \ref{Discretization}] 
Assume that $(X,\mu, Y)$ is a transverse $G$-system with transverse measured groupoid $(\cG, \nu)$ such that $\Lambda(Y)$ is discrete. It then follows from Corollary \ref{ImplementRes} that the restriction map $\Hcb^{\bullet}(G; \bR) \to \Hmb^{\bullet}((\cG, \nu); \underline{\bR})$ factors through the discretization map $\Hcb^\bullet(G; \bR) \to \Hb^\bullet(G^\delta; \bR)$, and hence injectivity of the former implies injectivity of the latter.
\end{proof}

  \bibliographystyle{alpha}

\bibliography{biblionote}

\end{document}